\begin{document}
\title{Boundary representations from constrained interpolation}

\author[G. Ben Ayun]{Gal Ben Ayun}
\address{Department of Mathematics\\ Ben-Gurion University of the Negev\\
Beer-Sheva \; 8410501 \\ Israel}

\author[E. Shamovich]{Eli Shamovich}
\address{Department of Mathematics\\ Ben-Gurion University of the Negev\\
Beer-Sheva \; 8410501 \\ Israel} 

\thanks{E.S. was partially supported by BSF Grant no. 2022235}

\begin{abstract}
   In this paper, we study $C^*$-envelopes of finite-dimensional operator algebras arising from constrained interpolation problems on the unit disc. In particular, we consider interpolation problems for the algebra \( H^\infty_{\text{node}}\) that consists of bounded analytic functions on the unit disk that satisfy \( f(0) = f(\lambda) \) for some \( 0 \neq \lambda \in \D \). We show that there exist choices of four interpolation nodes that exclude both $0$ and $\lambda$, such that if $I$ is the ideal of functions that vanish at the interpolation nodes, then $C^*_e(H^\infty_{\text{node}}/I)$ is infinite-dimensional. This differs markedly from the behavior of the algebra corresponding to interpolation nodes that contain the constrained points studied in the literature. Additionally, we use the distance formula to provide a completely isometric embedding of $C^*_e(H^\infty_{\text{node}}/I)$ for any choice of $n$ interpolation nodes that do not contain the constrained points into $M_n(G^2_{nc})$, where $G^2_{nc}$ is Brown's noncommutative Grassmannian.
\end{abstract}
\maketitle

\section{Introduction}
     Let $\{z_1,\ldots,z_n\}\subset\bb{D} = \left\{z \in \C \mid |z| < 1\right\}$ be a finite set of points and $\{w_1,\ldots,w_n\}\subset\bb{C}$. The Pick interpolation problem is the following: does there exist a holomorphic function $f \colon \bb{D}\to\overline{\bb{D}}$ (called an \textit{interpolating function}) such that $f(z_i)=w_i$? The solution to this problem is well known - a necessary and sufficient condition for the existence of an interpolating function is that the \textit{Pick matrix}
    $$\Bigg[\frac{1-w_i\overline{w_j}}{1-z_i\overline{z_j}}\Bigg]_{i,j=1}^n$$ is positive semi-definite. This question was solved independently by Pick and Nevanlinna (See, for example, \cite{pick1915beschrankungen}). More generally, one can formulate the matricial Pick interpolation problem, where the $w_i$'s are replaced by $s\times r$ matrices $\{W_1,\ldots, W_n\}$. Analogously, an interpolating function $f$ with $f(z_i)=W_i$ and $\|f\|\leq 1$ exists if and only if
    $$\Bigg[\frac{I-W_iW_j^*}{1-z_i\overline{z_j}}\Bigg] \geq 0.$$
   The operator algebraic approach to the Pick interpolation problem was pioneered by Koranyi and Sz-Nagy \cite{sz1958operatortheoretische}, and a breakthrough was achieved by Sarason \cite{sarason1967generalized}. The key ingredient is the relationship between $H^2(\D)$ and $H^{\infty}(\D)$. Recall that $H^2(\D)$ is the reproducing kernel Hilbert space (RKHS for short) of all analytic functions on $\D$ with square summable MacLaurin coefficients. The multiplier algebra of $H^2(\D)$ is $H^{\infty}(\D)$, the algebra of all bounded analytic function on $\D$. One encodes the fact that one can test the existence of solutions by the positivity of the corresponding Pick matrix for all sizes of matrices by saying that $H^2(\D)$ is a complete Pick space. Complete Pick spaces are well-studied and important objects in function theory. A celebrated theorem of Agler and McCarthy \cite{AglerMcC-compPick} tells us that under some mild assumptions, every complete Pick space is equivalent as an RKHS to a quotient of the Drury-Arveson space. Namely, the RKHS of analytic functions on the unit ball of $\C^d$ (where we allow $d = \infty$) with the reproducing kernel $k(z,w) = \frac{1}{1 - \langle z, w \rangle}$.
    
   However, even in the case of the annulus, the Pick interpolation problem does not have such a simple and elegant solution. Abrahamse \cite{abrahamse1979pick} has discovered that to solve the scalar Pick interpolation problem on a multiply-connected domain of genus $g$, one requires a family of kernels parametrized by the $g$-dimensional torus. In \cite{Ball}, Ball has solved the matricial interpolation problem on multiply-connected domains. However, a sufficient family of kernels described by Ball is parametrized by $g$-tuples of unitaries of all sizes. In \cite{Agler-unpub}, Agler formulated a general approach to the Pick interpolation problem based on families of kernels (see also \cite{JKMcC}). 
    
    Another generalization is the constrained Pick interpolation problem, where the interpolating function is to satisfy an additional condition. In \cite{davidson2009constrained}, the authors solve an interpolation problem on $\D$ with the additional constraint that the interpolating function $f$ satisfies $f'(0)=0$. The algebra of all such functions is denoted $H^{\infty}_1$. In this case, a family of kernel functions
    $$\{k^{\alpha,\beta}\}_{|\alpha|^2+|\beta|^2=1}$$ is required. Ragupathi \cite{ragupathi2009nevanlinna} generalized this result for subalgebras of $H^{\infty}$ with \textit{predual factorization}. One main family of examples of subalgebras with predual factorization consists of subalgebras of $H^{\infty}(\D)$ of the form
    $$H_{B}=\bb{C} \cdot 1+BH^{\infty},$$
    where $B$ is a Blaschke product. The constrained interpolation problem is analogous to the interpolation problem in multiply-connected domains. To be more precise, if one considers multiply-connected domains, the corresponding algebro-geometric object is a curve of genus $g$. In the constrained case (with a finite Blaschke product), the algebro-geometric object is a singular rational curve. One of the first examples of the interplay between dilation theory and singular rational curves is the Neal parabola \cite{DJMcC}. The Neal parabola is the planar curve cut out by the equation $y^2 = x^3$. The map $z \mapsto (z^2, z^3)$ gives a parametrization of the Neal parabola. It is rather straightforward that the algebra $H^{\infty}_1$ corresponds to the Neal parabola. More generally, $H_{B}$ corresponds to a parametrization of a rational curved ramified at the zeroes of $B$ with multiplicities prescribed by $B$. In \cite{ball2010constrained}, Ball, Bolotnikov, and ter Horst solved the matrix-valued constrained interpolation problem for $H^{\infty}_1$. An approach to the constrained interpolation problem through test functions was taken in \cite{DritschelPickering, DritschelUndrakh}. In \cite{davidson2011nevanlinna}, an approach was taken using the $\A_1(1)$ property of dual operator algebras, which was subsequently developed in \cite{davidson2020nevanlinna} with the introduction of the conductor ideal.

    The minimality of the corresponding family of kernels can be studied using boundary representations. Arveson introduced boundary representations in \cite{arveson1969subalgebras}. Given an operator algebra $A$, one would like to find the smallest $C^*$-algebra generated by $A$. This $C^*$-algebra is called the $C^*$-envelope of $A$. Its existence was conjectured by Arveson and proved by Hamana \cite{hamana1979injective}. In \cite{mccullough2002c}, Paulsen and McCullough calculate the $C^*$-envelopes of two operator algebras - the first one is
    $H^{\infty}(\bb{D}^2)/{I}$ with
    $$I=\left\{f\in H^{\infty}(\bb{D}^2) \mid f(0,0)=f\left(\frac{1}{\sqrt{2}},0\right)=f\left(0,\frac{1}{\sqrt{2}}\right)=0\right\}.$$
    This operator algebra arises from an interpolation problem studied by Solazzo \cite{solazzo2000interpolation}. The second algebra is $H^{\infty}(\bb{A})/{I_F}$, which arises from an interpolation problem in finitely connected domains, studied by Abrahamse \cite{abrahamse1979pick}. Here, $F \subset \bb{A}$ is the finite set of interpolation nodes and $I_F$ is the ideal of functions in $H^{\infty}(\bb{A})$ that vanish on $F$. It turns out that $C^*_e(H^{\infty}(\bb{A})/I_F) \cong M_n(C(\T))$, where $3 \leq n = |F|$ and $\T = \partial \D$ is the unit circle. Since, in the case of the annulus, the kernel family is parametrized by $\T$, we see that this family is minimal and solves the complete Pick interpolation problem on the annulus. In \cite{davidson2009constrained}, the authors show that if $0 \in F$, then $C^*_e(H^{\infty}_1/I_F) \cong M_{n+1}$. More generally, Ragupathi \cite{ragupathi2009nevanlinna} calculated $C^*_e(H_B/I_F)$ in the special case that $F$ contains some of the zeroes of $B$. The $C^*$-envelopes always turned out to be algebras of matrices. 

In this paper, we study constrained interpolation algebras arising from nodal cubics. Namely, for $0 \neq \lambda \in \D$, we consider the algebra
    $$\node=\{f\in H^{\infty} \mid f(0)=f(\lambda)\}=\bb{C}\cdot 1+\frac{z(z-\lambda)}{1-\overline{\lambda}z}H^{\infty}.$$
    We describe the solution to the matricial Pick interpolation problem and the corresponding family of kernels in Section \ref{sec:pick}. The family is analogous to the one obtained by Ball, Bolotnikov, and ter Horst \cite{ball2010constrained}. The first main result of the paper is in  Section \ref{sec:boundary}. 
    \begin{thmA}[{Theorem \ref{thm:good_then_big_envelope}} and {Theorem \ref{thm:example_good}}]
    For $\lambda = \frac{1}{\sqrt{2}}$, there exists $F = \{z_1,z_2,z_3,z_4\} \subset \D$, such that $C^*_e(\node/I_F)$ is infinite-dimensional.
    \end{thmA}
    To this end, we produce a family of pairwise unitarily inequivalent dilation maximal irreducible representations parametrized by the circle without a finite set of points. 
    This result shows that the behavior of the $C^*$-envelope when the set of interpolation nodes does not contain the ramification points is markedly different from the case studied in the literature so far. In Section \ref{sec:cover}, we provide a candidate for the $C^*$-envelope of $C^*_e(\node/I_F)$ for $F$ that does not contain $0$ and $\lambda$. 
    \begin{thmA}[{Theorem \ref{thm:completely.isometric.embedding}}]
        Let $0, \lambda \notin F \subset \D$ be a finite set. Then, there exists a completely isometric embedding $\Gamma \colon \node/I_F \to M_n(G_2^{nc})$.
    \end{thmA}
    Here $G^2_{nc}$ is the noncommutative Grassmannian introduced by Brown \cite{brown1981ext}. The noncommutative Grassmannian $G^2_{nc}$ is the noncommutative analog of the Riemann sphere. As we will see, the latter parametrizes the family of kernels for the scalar Pick problem.
    
    \vspace{0.2cm}
    
    \textbf{Acknowledgment:} We thank the referees for reading the manuscript carefully and providing numerous comments. These comments have contributed greatly to the presentation of this paper.

\section{Pick family for $H^{\infty}_{\operatorname{node}}$}\label{sec:pick}

The content of this section is inspired by the work of Ball, Bolotnikov, and ter Horst \cite{ball2010constrained}. Though some proofs are similar to \cite{ball2010constrained}, we include them for completeness. Our method differs from the approach in \cite{ball2010constrained}, and we aim to obtain a distance formula.

 For $k,\,l\in\N$, we denote by $M_{k\times l}(L^2)$ the Hilbert space of $k\times l$ matrices with entries in $L^2(\T)$, with the inner product given by
    $$\ip{F}{G}=\frac{1}{2\pi}\int_{\T}\operatorname{tr}[FG^*].$$
    Similarly, $M_{k\times l}(L^1)$ is the Banach space of $k\times l$ matrices with entries in $L^1$, with the norm given by
    $$\|F\|_1=\frac{1}{2\pi}\int_{\T}\operatorname{tr}[(F^*F)^{\frac{1}{2}}],$$
    and $M_{k\times l}(L^{\infty})$ is the Banach space of $k\times l$ matrices with entries in $L^{\infty}
    $ and the sup norm $\|\cdot\|_{\infty}$.
    We will also need the Hilbert space $M_{k\times l}(H^2)$ and the Banach spaces $M_{k\times l}(H^1)$ and $M_{k\times l}(H^{\infty})$, which are the subspaces of analytic functions in $M_{k\times l}(L^2)$, $M_{k\times l}(L^1)$ and $M_{k\times l}(L^{\infty})$, respectively.
    We recall that the Riesz representation theorem identifies $M_{k\times l}(L^1)^*$ with $M_{l\times k}(L^{\infty})$ via
    $$\varphi_F(G)=\frac{1}{2\pi}\int_{\T}\operatorname{tr}[FG],$$ for $F\in M_{k\times l}(L^1)$ and $G\in M_{l\times k}(L^{\infty})$. Moreover, the preannihilator of $M_{k\times l}(H^{\infty})$ is given by $M_{l\times k}(zH^1)$.
    \vspace{0.25cm}

For a fixed $0\neq \lambda\in\D$, consider the norm-closed, co-dimension $1$ subalgebra of $H^{\infty}$ defined by
    $$\node=\{f\in H^{\infty} \mid f(0)=f(\lambda)\}=\C\cdot 1+B_{\lambda}H^{\infty},$$ where $B_{\lambda}(z)=z\frac{z-\lambda}{1-\overline{\lambda}z}$ is the Blaschke product vanishing at $0$ and $\lambda$.
    Moreover, for $z\in\D$ let $k_w(z)=\frac{1}{1-z\overline{w}}$ be the function in $H^2$ satisfying $\ip{f}{k_w}=f(w)$ for all $f\in H^2$. 
    \begin{lem}
        $\node$ separates points in $\bb{D}\setminus\{0,\lambda\}$.
    \end{lem}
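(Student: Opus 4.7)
The plan is to exhibit, for any pair of distinct points $z_1, z_2 \in \D \setminus \{0,\lambda\}$, an explicit function in $\node$ that takes different values at them. The natural candidates are the two simplest nonconstant elements of $B_\lambda H^\infty \subset \node$, namely $B_\lambda$ itself and $z B_\lambda(z)$. Since $B_\lambda$ vanishes only at $0$ and $\lambda$, we have $B_\lambda(z_j) \neq 0$ for $j=1,2$.

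First I would handle the generic case: if $B_\lambda(z_1) \neq B_\lambda(z_2)$, then $f = B_\lambda \in \node$ already separates the points. Otherwise $B_\lambda(z_1) = B_\lambda(z_2) =: c \neq 0$, and I would evaluate $f(z) = zB_\lambda(z) \in \node$ to get $f(z_1) = z_1 c$ and $f(z_2) = z_2 c$. Since $c \neq 0$ and $z_1 \neq z_2$, these values differ, and $f$ separates the points.

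There is no real obstacle here; the only subtlety to note (and perhaps worth mentioning in the paper) is why the points $0$ and $\lambda$ must be excluded from the statement: every $f \in \node$ satisfies $f(0) = f(\lambda)$ by definition, so these two points cannot be separated by $\node$. The argument above uses only that $B_\lambda$ is a degree two Blaschke product with zero set $\{0,\lambda\}$, so points outside this zero set are the only ones that can be separated, and the pair $\{B_\lambda, zB_\lambda\}$ suffices.
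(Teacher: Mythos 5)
Your argument is correct, and it takes a different (more explicit) route than the paper. The paper's proof invokes Lagrange interpolation: given finitely many points $z_1,\ldots,z_n\in\D\setminus\{0,\lambda\}$, it produces polynomials $f_i$ with $f_i(z_j)=\delta_{i,j}$ and $f_i(0)=f_i(\lambda)=0$, so $f_i\in\node$; separation is then immediate, and the construction simultaneously yields a family of ``coordinate'' functions in $\node$ taking arbitrarily prescribed values at the nodes, a device the paper reuses repeatedly later (e.g.\ in the factorization argument and in Lemma \ref{lem:diagonalizable}). Your proof instead exhibits two concrete separators: $B_\lambda\in\node$ handles the generic case $B_\lambda(z_1)\neq B_\lambda(z_2)$, and when $B_\lambda(z_1)=B_\lambda(z_2)=c$, the value $c$ is nonzero because $z_1,z_2$ avoid the zero set $\{0,\lambda\}$ of $B_\lambda$, so $zB_\lambda(z)\in B_\lambda H^\infty\subset\node$ separates via $z_1c\neq z_2c$. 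This is completely sound (note $zB_\lambda$ is indeed in $\node$, either as an element of $B_\lambda H^\infty$ or because it vanishes at both $0$ and $\lambda$), and it is arguably more transparent, since it identifies a two-element separating family $\{B_\lambda, zB_\lambda\}$ and makes visible exactly why $0$ and $\lambda$ must be excluded. What it does not give, and what the Lagrange-interpolation proof does, is the interpolation statement (prescribing arbitrary values at finitely many points by elements of $\node$), which is the form of the result the rest of the paper actually leans on.
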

    \begin{proof}
        Let $z_1,\ldots,z_n\in\bb{D}\setminus\{0,\lambda\}$. By Lagrange interpolation, there are polynomials $\{f_i\}_{i=1}^n\subset H^{\infty}$ satisfying
        $$f_i(z_j)=\delta_{i,j},\,f_i(0)=f_i(\lambda)=0.$$
        In particular, $f_i\in \node$ for all $1\leq i\leq n$.
    \end{proof}
    
    \begin{lem}
        Let $(\alpha,\beta)\in\C^2$ such that $|\alpha|^2+|\beta|^2=1$ and define
        $$H^2_{\alpha,\beta}=\{f\in H^2 \mid \exists c \in \C,\, f(0)=c\cdot\alpha,\,f(\lambda)=c\cdot(\alpha+f_{\lambda}(\lambda)\beta)\}.$$
        Here $f_{\lambda}(z)=\frac{\sqrt{1-|\lambda|^2}}{|\lambda|}(k_{\lambda}(z)-1)$. Then, $H^2_{\alpha,\beta}$ is a reproducing kernel Hilbert space with kernel function
        $$k^{\alpha,\beta}(z,w)=\overline{(\alpha+\beta f_{\lambda}(w))}(\alpha+\beta f_{\lambda}(z))+\frac{B_{\lambda}(z)\overline{B_{\lambda}(w)}}{1-z\overline{w}}.$$ \label{prop.reprod.kernel}
    \end{lem}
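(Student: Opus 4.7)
The plan is to exploit the orthogonal decomposition $H^2 = K_{B_\lambda} \oplus B_\lambda H^2$, where $K_{B_\lambda}$ is the $2$-dimensional model space spanned by the Szegő kernels $\{k_0, k_\lambda\}$ (equivalently, $\{1, f_\lambda\}$). First, I would observe that $H^2_{\alpha,\beta}$ is a closed linear subspace of $H^2$: the condition ``$\exists c\in\C$ with $f(0)=c\alpha$ and $f(\lambda)=c(\alpha+f_\lambda(\lambda)\beta)$'' is equivalent to the single linear equation $(\alpha+f_\lambda(\lambda)\beta)f(0) - \alpha f(\lambda) = 0$, which is preserved under the continuous evaluation functionals $f\mapsto f(0)$ and $f\mapsto f(\lambda)$. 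Consequently, $H^2_{\alpha,\beta}$ inherits a reproducing kernel from $H^2$.

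Next, I would note that $B_\lambda H^2 \subset H^2_{\alpha,\beta}$, since every $B_\lambda h$ satisfies $B_\lambda h(0) = B_\lambda h(\lambda) = 0$ (take $c=0$), and that this inclusion is orthogonal to the model space. This yields the orthogonal decomposition
\[
H^2_{\alpha,\beta} = V_{\alpha,\beta} \oplus B_\lambda H^2, \qquad V_{\alpha,\beta} := H^2_{\alpha,\beta} \cap K_{B_\lambda}.
\]
Since the pair of evaluations $f\mapsto (f(0), f(\lambda))$ restricts to a linear isomorphism $K_{B_\lambda} \to \C^2$, and the constraint on $(f(0), f(\lambda))$ cuts out a one-dimensional subspace of $\C^2$, the space $V_{\alpha,\beta}$ is one-dimensional.

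The central computation is to identify a unit-norm generator of $V_{\alpha,\beta}$. Setting $g_{\alpha,\beta}(z) = \alpha + \beta f_\lambda(z)$, I would check three things: (i) $g_{\alpha,\beta} \in K_{B_\lambda}$, since $f_\lambda$ is a scalar multiple of $k_\lambda - k_0$ and hence lies in $\operatorname{span}\{k_0, k_\lambda\}$; (ii) using $f_\lambda(0)=0$ and the normalization formula for $k_\lambda$, one sees that $\{1, f_\lambda\}$ is an orthonormal set in $H^2$, so $\|g_{\alpha,\beta}\|_{H^2}^2 = |\alpha|^2 + |\beta|^2 = 1$; (iii) the values $g_{\alpha,\beta}(0) = \alpha$ and $g_{\alpha,\beta}(\lambda) = \alpha + \beta f_\lambda(\lambda)$ place $g_{\alpha,\beta}$ in $H^2_{\alpha,\beta}$ via $c=1$. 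These three facts force $V_{\alpha,\beta} = \C\cdot g_{\alpha,\beta}$.

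Finally, I would invoke the general principle that the reproducing kernel of an orthogonal direct sum is the sum of the reproducing kernels. The kernel of $\C g_{\alpha,\beta}$ is the rank-one kernel $g_{\alpha,\beta}(z)\overline{g_{\alpha,\beta}(w)} = (\alpha+\beta f_\lambda(z))\overline{(\alpha+\beta f_\lambda(w))}$, while the kernel of $B_\lambda H^2$ is the standard multiplier-kernel $\frac{B_\lambda(z)\overline{B_\lambda(w)}}{1-z\overline{w}}$. Summing these two yields the claimed formula for $k^{\alpha,\beta}(z,w)$. The only step with any real content is the identification of $V_{\alpha,\beta}$ with $\C g_{\alpha,\beta}$, but this is essentially bookkeeping given the dimension count and the explicit values of $g_{\alpha,\beta}$ at the two nodes; I do not anticipate any serious obstacle.
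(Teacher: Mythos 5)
Your proof is correct and follows essentially the same route as the paper: the paper likewise identifies $H^2_{\alpha,\beta}=\operatorname{span}\{\alpha+\beta f_\lambda\}\oplus B_\lambda H^2$ (using that $\{1,f_\lambda\}$ is orthonormal and spans $\operatorname{span}\{1,k_\lambda\}=\left(B_\lambda H^2\right)^\perp$) and then obtains the kernel by projecting the Szeg\H{o} kernel onto this closed subspace, which amounts to your ``sum of kernels of the orthogonal summands'' computation. The only cosmetic difference is that the paper reads off the coefficient $c(\alpha,\beta)$ of a general element directly via $\langle g,1\rangle$ and $\langle g,f_\lambda\rangle$, whereas you argue via closedness plus a dimension count; both are fine.
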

    \begin{proof}

        Note that $f_{\lambda}(\lambda) = \frac{|\lambda|}{\sqrt{1 - |\lambda|^2}}$. Hence, for every $g \in H^2_{\alpha,\beta}$,
        \[
        \langle g, 1 \rangle = c \alpha,\, \langle g, f_{\lambda} \rangle = \frac{\sqrt{1- |\lambda|^2}}{|\lambda|} (g(\lambda) - g(0)) = c \beta.
        \]
        In other words, $H^2_{\alpha,\beta}=\operatorname{span}\{\alpha+\beta f_{\lambda}(z)\}\oplus B_{\lambda}H^2$, where the sum is direct since $B_{\lambda}H^2$ is orthogonal to $\operatorname{span}\{1,f_{\lambda}\}=\operatorname{span}\{1,k_{\lambda}\}$, as $B_{\lambda}(0)=B_{\lambda}(\lambda)=0$.
        Therefore, $H^2_{\alpha,\beta}\subset H^2$ is a closed subspace. Hence, $H^2_{\alpha,\beta}$ is a RKHS with kernel functions $P_{H^2_{\alpha, \beta}} k_w$. Moreover,
        \begin{multline*}
        k^{\alpha,\beta}(z,w) = \langle P_{H^2_{\alpha, \beta}} k_w, P_{H^2_{\alpha, \beta}} k_z \rangle  = \langle k_w, \alpha + \beta f_{\lambda} \rangle (\alpha + \beta f_{\lambda}(z)) + \langle M_{B_{\lambda}} M_{B_{\lambda}}^* k_w, k_z \rangle = \\ \overline{(\alpha + \beta f_{\lambda}(w))}(\alpha + \beta f_{\lambda}(z)) + \frac{B_{\lambda}(z)\overline{B_{\lambda}(w)}}{1 - z \bar{w}}.
        \end{multline*}

    \end{proof}

    \begin{lem} For every $|\alpha|^2+|\beta|^2=1$, $\node \subseteq \operatorname{Mult}(H^2_{\alpha,\beta})$. Moreover, if $\alpha\neq 0, -f_{\lambda}(\lambda)\beta$, then we have equality. As a result, for every $|\alpha|^2+|\beta|^2=1$ and every $f \in \node$,  $M_f^*k^{\alpha,\beta}_{z_i}=\overline{f(z_i)}k^{\alpha,\beta}_{z_i}.$\label{lem.eigenvalues.of.multiplier}
    \end{lem}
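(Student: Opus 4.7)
The plan is to handle the forward containment by a direct check on the constraints defining $H^2_{\alpha,\beta}$, to get the reverse containment by testing $\varphi$ against a single well-chosen element of $H^2_{\alpha,\beta}$, and to deduce the eigenvalue equation from the standard RKHS adjoint-multiplier identity.

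For $\node \subseteq \operatorname{Mult}(H^2_{\alpha,\beta})$ the plan is pointwise. Fix $f \in \node$ and $g \in H^2_{\alpha,\beta}$, so $g \in H^2$ with $g(0)=c\alpha$ and $g(\lambda)=c(\alpha+f_{\lambda}(\lambda)\beta)$ for some $c \in \C$. Since $f \in H^\infty \subseteq \operatorname{Mult}(H^2)$, certainly $fg \in H^2$. Using the defining identity $f(0)=f(\lambda)$, set $c' := f(0)c = f(\lambda)c$ and verify $(fg)(0)=c'\alpha$ and $(fg)(\lambda)=c'(\alpha+f_{\lambda}(\lambda)\beta)$, so $fg \in H^2_{\alpha,\beta}$.

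For the reverse direction under $\alpha \neq 0, -f_{\lambda}(\lambda)\beta$, let $\varphi \in \operatorname{Mult}(H^2_{\alpha,\beta})$. I would first argue $\varphi \in H^\infty$. The containment $B_\lambda H^2 \subseteq H^2_{\alpha,\beta}$ (the $c=0$ part of the decomposition established in Lemma \ref{prop.reprod.kernel}) gives $\varphi B_\lambda \cdot H^2 \subseteq H^2$, hence $\varphi B_\lambda \in \operatorname{Mult}(H^2) = H^\infty$. Next, let $g_0 := \alpha + \beta f_{\lambda} \in H^2_{\alpha,\beta}$ (the $c=1$ element); since $f_\lambda(0)=0$ and $f_\lambda(\lambda)=\tfrac{|\lambda|}{\sqrt{1-|\lambda|^2}}$, the hypothesis on $(\alpha,\beta)$ forces $g_0(0)=\alpha \neq 0$ and $g_0(\lambda)=\alpha+f_\lambda(\lambda)\beta \neq 0$. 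Then $\varphi g_0 \in H^2_{\alpha,\beta} \subseteq H^2$ is analytic on $\D$, so $\varphi = (\varphi g_0)/g_0$ has only removable singularities at $0$ and $\lambda$, and combining with $\varphi B_\lambda \in H^\infty$ gives $\varphi \in H^\infty$. Finally, the containment $\varphi g_0 \in H^2_{\alpha,\beta}$ produces $c'$ with $(\varphi g_0)(0)=c'\alpha$ and $(\varphi g_0)(\lambda)=c'(\alpha+f_\lambda(\lambda)\beta)$; dividing by the (nonzero) values of $g_0$ yields $c' = \varphi(0) = \varphi(\lambda)$, so $\varphi \in \node$.

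The final assertion is now immediate from the standard RKHS calculation: for any $g \in H^2_{\alpha,\beta}$,
\[
\langle g, M_f^* k^{\alpha,\beta}_{z_i}\rangle = \langle fg, k^{\alpha,\beta}_{z_i}\rangle = f(z_i)g(z_i) = \langle g, \overline{f(z_i)}\, k^{\alpha,\beta}_{z_i}\rangle,
\]
which is legitimate because the first part of the lemma guarantees $fg \in H^2_{\alpha,\beta}$. The main obstacle is the reverse containment, and more specifically the step of promoting $\varphi$ from a multiplier of $H^2_{\alpha,\beta}$ to an element of $H^\infty$: this is exactly where the non-vanishing of $g_0$ at both $0$ and $\lambda$ is used, and correspondingly explains why the two excluded values $\alpha = 0$ and $\alpha = -f_\lambda(\lambda)\beta$ must be ruled out.
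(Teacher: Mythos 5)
Your proof is correct, and in the reverse containment it takes a somewhat different route from the paper. The paper first shows any multiplier $f$ of $H^2_{\alpha,\beta}$ lies in $H^\infty$ for \emph{every} $(\alpha,\beta)$: it bounds $|f|$ by the multiplier norm, obtains analyticity near each $z_0\in\D\setminus\{0,\lambda\}$ by writing $f=\frac{fk^{\alpha,\beta}_{z_0}}{k^{\alpha,\beta}_{z_0}}$ (using $k^{\alpha,\beta}_{z_0}(z_0)>0$), and invokes Riemann's removable singularity theorem at $0$ and $\lambda$; it then deduces $f(0)=f(\lambda)$ by testing against a polynomial $p$ with $p(0)=\alpha$, $p(\lambda)=\alpha+f_\lambda(\lambda)\beta$. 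You instead exploit the structural decomposition $H^2_{\alpha,\beta}=\C(\alpha+\beta f_\lambda)\oplus B_\lambda H^2$: the inclusion $B_\lambda H^2\subseteq H^2_{\alpha,\beta}$ gives $\varphi B_\lambda\in\operatorname{Mult}(H^2)=H^\infty$, handling analyticity off $\{0,\lambda\}$, while testing against $g_0=\alpha+\beta f_\lambda$ simultaneously yields analyticity at $0,\lambda$ and the node condition $\varphi(0)=\varphi(\lambda)$ in one step (which is essentially the paper's polynomial trick with $g_0$ in place of $p$). Your version is arguably more economical under the hypothesis $\alpha\neq 0,-f_\lambda(\lambda)\beta$, at the price of only covering that case, whereas the paper's multiplier-norm argument gives $\operatorname{Mult}(H^2_{\alpha,\beta})\subseteq H^\infty$ for all $(\alpha,\beta)$. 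One step you leave terse: ``combining with $\varphi B_\lambda\in H^\infty$ gives $\varphi\in H^\infty$'' requires knowing that $|B_\lambda|$ is bounded below off a compact subset of $\D$ (true since $B_\lambda$ is a finite Blaschke product, so $|B_\lambda(z)|\to 1$ as $|z|\to 1^-$), together with continuity of $\varphi$ on the remaining compact set; alternatively you could quote, as the paper does, the standard bound $|\varphi(z)|\leq\|M_\varphi\|$ at every $z$ with $k^{\alpha,\beta}_z\neq 0$. With that sentence added, the argument is complete; the forward containment and the eigenvalue identity coincide with the paper's (you simply write out the routine RKHS computation that the paper cites as standard).
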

    \begin{proof}
        Fix arbitrary $(\alpha, \beta)$ on the unit sphere. Let $f\in \node$ and $f(0) = f(\lambda) = \tilde{c}$. Then, for every $g\in H^2_{\alpha,\beta}$,
        $$(fg)(0)=c f(0)\alpha = c \tilde{c} \alpha,\,(fg)(\lambda)=c \tilde{c} (\alpha+f_{\lambda}(\lambda)\beta).$$ That is $fg\in H^2_{\alpha,\beta}$, hence $f\in \operatorname{Mult}(H^2_{\alpha,\beta})$. The last part then follows automatically by properties of $\operatorname{Mult}(\mathcal{H})$ for a general RKHS.
        \\\\
        On the other hand, for every $z_0 \in \D \setminus \{0, \lambda\}$,
        \[
        k^{\alpha,\beta}_{z_0}(z_0) = \|k^{\alpha,\beta}_{z_0}\|^2 = |\alpha + f_{\lambda}(z_0) \beta|^2 + \frac{|B_{\lambda}(z_0)|^2}{1 - |z_0|^2} > 0.
        \]
        Let $f\in \operatorname{Mult}(H^2_{\alpha,\beta})$. Since $f$ is a multiplier, $\underset{z\in\D}{\sup}|f(z)|\leq \|M_f\|_{\text{mult}}<\infty$.
        Let $z_0\in\D \setminus \{0, \lambda\}$. Since $k^{\alpha,\beta}_{z_0}(z_0)> 0$ and is analytic, $\frac{1}{k^{\alpha,\beta}_{z_0}(z)}$ is analytic in a neighborhood of $z_0$. Moreover, $fk_{z_0}^{\alpha,\beta}\in H^2_{\alpha,\beta}$, and thus, $f=\frac{fk_{z_0}^{\alpha,\beta}}{k_{z_0}^{\alpha,\beta}}$ is analytic in a neighborhood of $z_0$. By Riemann's removable singularity theorem, $f$ is analytic in all of $\D$. Therefore, $f\in H^{\infty}$. Assume, in addition, that $\alpha \neq 0, - f_{\lambda}(\lambda) \beta$. Then, let $p\in H^2_{\alpha,\beta}$ be a polynomial satisfying
        $$p(0)=\alpha,\,p(\lambda)=\alpha+f_{\lambda}(\lambda)\beta.$$
        By assumption,
        $$f(0)p(0)=c\cdot\alpha,\,f(\lambda)p(\lambda)=c\cdot (\alpha+f_{\lambda}(\lambda)\beta).$$ Therefore, $f(0)=f(\lambda)=c$, that is $f\in \node$.
        
    \end{proof}
     
    For fixed $k,\,l'\geq l\in\N$, let $M_k(\node)$ denote the space of $k\times k$ matrices with entries in $\node$ and define
    $$\G(l'\times l)=\{\alpha,\beta\in M_{l'\times l}(\C) \mid \alpha\alpha^*+\beta\beta^*=I_l\}.$$
    Furthermore, for $(\alpha,\beta)\in\G(l'\times l)$, define
    $$(H^2_{\alpha,\beta})_{k,l,l'}=\{F\in M_{k\times l}(H^2) \mid \exists M\in M_{k\times l'}(\C)  \mid F(0)=M\alpha,\,F(\lambda)=M(\alpha+f_{\lambda}(\lambda)\beta)\}.$$
    To simplify notations, we will shorten this by $(H^2_{\alpha,\beta})_k$, if $k=l=l'$ and if $k=l=l'=1$, we will simply write $H^2_{\alpha,\beta}$.
    By an argument similar to the one in the proof of Lemma \ref{prop.reprod.kernel},
    \[(H^2_{\alpha,\beta})_{k,l,l'}=M_{k\times l'}(\C)(\alpha+\beta f_{\lambda}(z))\oplus B_{\lambda}M_{k\times l}(H^2).\]

    \begin{prop}
        $(H^2_{\alpha,\beta})_{k,l,l'}$ is a matrix-valued RKHS with kernel function
        $$k^{\alpha,\beta}(z,w)=(\alpha^*+\overline{f_{\lambda}(w)}\beta^*)(\alpha+f_{\lambda}(z)\beta)+\frac{B_{\lambda}(z)\overline{B_{\lambda}(w)}}{1-z\overline{w}}I_l.$$
        To be more precise, for every $f \in (H^2_{\alpha,\beta})_{k,l,l'}$, every $w \in \D$, and every $X \in M_{k\times l}(\C)$,
        \[
        \tr(f(w)X^*) = \langle f, X k^{\alpha,\beta}_w \rangle.
        \]
        Moreover, for every $F\in M_k(\node)$,
        $$M_F^*Xk^{\alpha,\beta}(\cdot,w)=F(w)^*Xk^{\alpha,\beta}(\cdot,w).$$
        \label{prop.kernel.function}
    \end{prop}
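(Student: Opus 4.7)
The plan is to mimic the scalar case (Lemma \ref{prop.reprod.kernel}), with the orthogonal decomposition
\[
(H^2_{\alpha,\beta})_{k,l,l'} = M_{k \times l'}(\C)(\alpha + f_\lambda \beta) \oplus B_\lambda M_{k \times l}(H^2)
\]
doing all the heavy lifting. First I would observe that the proposed kernel respects this decomposition: for fixed $w \in \D$ and $X \in M_{k \times l}(\C)$,
\[
X k^{\alpha,\beta}(\cdot, w) = \bigl[X(\alpha^* + \overline{f_\lambda(w)}\beta^*)\bigr](\alpha + f_\lambda \beta) + \overline{B_\lambda(w)}\, B_\lambda\, (X \cdot k_w),
\]
where the first piece lies in the constant summand with coefficient matrix $X(\alpha^* + \overline{f_\lambda(w)}\beta^*) \in M_{k \times l'}(\C)$, and the second piece lies in $B_\lambda M_{k \times l}(H^2)$.

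Next I would verify the reproducing formula $\tr(F(w) X^*) = \langle F, X k^{\alpha,\beta}(\cdot, w) \rangle$. Writing an arbitrary $F \in (H^2_{\alpha,\beta})_{k,l,l'}$ as $F = M(\alpha + f_\lambda \beta) + B_\lambda G$ with $M \in M_{k \times l'}(\C)$ and $G \in M_{k \times l}(H^2)$, the two summands contribute separately. For the $B_\lambda$ summand, using $|B_\lambda| = 1$ on $\T$ and applying the scalar Szeg\H{o} reproducing property entrywise yields $B_\lambda(w) \tr(G(w) X^*)$. For the constant summand, expand using the orthonormality of $\{1, f_\lambda\}$ in $H^2$ to reduce the inner product to a single matrix trace; the identity on $(\alpha,\beta) \in \G(l' \times l)$ is exactly what collapses the resulting sum into $\tr(M(\alpha + f_\lambda(w)\beta) X^*)$. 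Adding the two pieces gives $\tr(F(w) X^*)$, which is the desired reproducing property.

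For the multiplier statement, I would first check that $M_F$ preserves $(H^2_{\alpha,\beta})_{k,l,l'}$. Since $F \in M_k(\node)$ satisfies $F(0) = F(\lambda)$, one may write $F = F(0) + B_\lambda F_0$ with $F_0 \in M_k(H^\infty)$; multiplying through $F \cdot [M(\alpha + f_\lambda \beta) + B_\lambda G]$ and regrouping shows that the constant summand becomes $F(0) M (\alpha + f_\lambda \beta)$, with all the remaining terms carrying a factor of $B_\lambda$. The adjoint formula then follows formally from the reproducing property already established: for every $H \in (H^2_{\alpha,\beta})_{k,l,l'}$,
\[
\langle H, M_F^* X k^{\alpha,\beta}_w \rangle = \langle FH, X k^{\alpha,\beta}_w \rangle = \tr\bigl(F(w)H(w) X^*\bigr) = \tr\bigl(H(w)(F(w)^* X)^*\bigr) = \langle H, F(w)^* X k^{\alpha,\beta}_w \rangle,
\]
using cyclicity of the trace in the middle step.

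The main bookkeeping obstacle is unwinding the $(k,l,l')$ tensor structure correctly when expanding $\langle M(\alpha + f_\lambda \beta), X(\alpha^* + \overline{f_\lambda(w)}\beta^*)(\alpha + f_\lambda \beta) \rangle$ in the constant summand, and recognizing that the normalization condition defining $\G(l' \times l)$ is precisely what kills the cross-terms so that the expression collapses to the desired trace.
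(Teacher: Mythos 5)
Your proposal is correct and follows essentially the same route as the paper: the same orthogonal decomposition $M_{k\times l'}(\C)(\alpha+f_\lambda\beta)\oplus B_\lambda M_{k\times l}(H^2)$, splitting the inner product into the constant and $B_\lambda$ summands, using orthonormality of $\{1,f_\lambda\}$ together with $\alpha\alpha^*+\beta\beta^*=I$ for the first and the isometry of multiplication by $B_\lambda$ (entrywise Szeg\H{o} reproducing) for the second, and the same formal trace computation for the adjoint formula. The only difference is that you explicitly verify $M_F$-invariance of the space via $F=F(0)+B_\lambda F_0$, which the paper leaves implicit here (it is the matrix analogue of the scalar multiplier lemma); this is a harmless, slightly more careful addition rather than a different argument.
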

    \begin{proof}
        Let $f\in (H^2_{\alpha,\beta})_{k,l,l'}$ and write $f=U(\alpha+\beta f_{\lambda})+B_{\lambda}\tilde{f}$ for some $\tilde{f}\in M_{k\times l}(H^2)$. Then, for every
        $X\in M_{k\times l}(\C)$, 
        \begin{align*}
    \ip{f}{Xk^{\alpha,\beta}(\cdot,w)} 
    &= \ip{U(\alpha+\beta f_\lambda)+B_{\lambda}\tilde{f}}{Xk^{\alpha,\beta}(\cdot,w)}= \\
    &= \underbrace{\ip{U(\alpha+\beta f_\lambda)}{X(\alpha^*+\beta^*\overline{f_{\lambda}(w)})(\alpha+\beta f_{\lambda})}}_{(1)} 
    + \underbrace{\ip{B_{\lambda}\tilde{f}}{\overline{B_{\lambda}(w)}X {B_{\lambda} k(\cdot,w)}}}_{(2)}.
    \end{align*}
    Here, we write $k(z,w) = \frac{1}{1 - z \bar{w}}$. We treat each summand separately.   
        \begin{align*}
    &\ip{U(\alpha+\beta f_\lambda)}{X(\alpha^*+\beta^*\overline{f_{\lambda}(w)})(\alpha+\beta f_{\lambda})}= \\
    &= \ip{U\alpha}{X(\alpha^*+\beta^*\overline{f_{\lambda}}(w))\alpha} 
    + \ip{U\beta f_{\lambda}}{X(\alpha^*+\beta^*\overline{f_{\lambda}(w)})\beta f_{\lambda}}= \\
    &= \operatorname{tr}\left[U\alpha\alpha^*(\alpha+\beta f_{\lambda}(w))X^*\right] 
    + \operatorname{tr}\left[U\beta\beta^*(\alpha+\beta f_{\lambda}(w))X^*\right] \\
    &= \operatorname{tr}\left[U(\alpha+\beta f_{\lambda}(w))X^*\right].
\end{align*}
For the second summand, we observe that the operator of multiplication by $B_{\lambda}$ is an isometry. Therefore, 
\[
\ip{B_{\lambda}\tilde{f}}{X\frac{B_{\lambda}\overline{B_{\lambda}(w)}}{1-z\overline{w}}} = B_{\lambda}(w) \left\langle \tilde{f}, X \frac{1}{1 - z \overline{w}}\right\rangle = B_{\lambda}(w) \operatorname{tr}(\tilde{f}(w) X^*).
\]
Combining the two, we get our result. For the last claim of the lemma, note that for every $G\in (H^2_{\alpha,\beta})_{k,l,l'}$,
$$\ip{G}{M_F^*Xk^{\alpha,\beta}(\cdot,w)}=\ip{FG}{Xk^{\alpha,\beta}(\cdot,w)}=\operatorname{tr}(X^*F(w)G(w))=\operatorname{tr}((F(w)^*X)^*G(w))=$$$$=\ip{G(w)}{F(w)^*Xk^{\alpha,\beta}(\cdot,w)}=\ip{G}{F(w)^*Xk^{\alpha,\beta}(\cdot,w)}.$$
\end{proof}
Now we are ready to present the main theorem of this section.
\begin{thm}
        For a fixed $k\in\N$, let $\{z_1,\ldots,z_n\}\subset\D$ and $\{W_1,\ldots,W_n\}\in M_k(\C)$. There exists a function $F\in M_k(\node)$ satisfying $\|F\|_{\infty} \leq 1$ and $F(z_i)=W_i$, for all $1 \leq i \leq n$ 
        if and only if for every $(\alpha,\beta)\in\G(k\times k)$ and for every  
        $(X_1,\ldots,X_n)\in (M_k(\C))^n$,
        
        \begin{equation}\sum_{i,j=1}^n \operatorname{tr}[X_jk^{\alpha,\beta}(z_i,z_j)X_i^*-W_j^*X_jk^{\alpha,\beta}(z_i,z_j)X_i^*W_i]\geq 0.\label{eq.trace.condition}\end{equation}
        \label{thm.trace.condition}
        \end{thm}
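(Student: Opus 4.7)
The plan is to prove necessity by a direct RKHS calculation using Proposition \ref{prop.kernel.function}, and sufficiency by an $L^1$-$L^\infty$ duality argument of the kind foreshadowed at the start of the section.

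For necessity, fix $F \in M_k(\node)$ with $\|F\|_{\infty} \leq 1$ and $F(z_i) = W_i$. For each $(\alpha, \beta) \in \G(k \times k)$, the condition $f(0) = f(\lambda)$ ensures that $M_F$ preserves $\mathcal{H} := (H^2_{\alpha,\beta})_k$ (matrix version of Lemma \ref{lem.eigenvalues.of.multiplier}). Since $\mathcal{H} \subseteq M_k(H^2)$ isometrically and multiplication by $F$ on $M_k(H^2)$ has operator norm $\|F\|_{\infty} \leq 1$, its restriction to $\mathcal{H}$ also has norm $\leq 1$, so $I - M_F M_F^* \geq 0$. I apply this to $v = \sum_i X_i k^{\alpha,\beta}(\cdot, z_i) \in \mathcal{H}$ and use the adjoint formula $M_F^*(X k^{\alpha,\beta}(\cdot, z_i)) = F(z_i)^* X k^{\alpha,\beta}(\cdot, z_i) = W_i^* X k^{\alpha,\beta}(\cdot, z_i)$ from Proposition \ref{prop.kernel.function}. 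Expanding $\|v\|^2 - \|M_F^* v\|^2 \geq 0$ via the trace pairing in that proposition and rearranging by trace cyclicity yields \eqref{eq.trace.condition}.

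For sufficiency, assume \eqref{eq.trace.condition} holds for every choice of $(\alpha,\beta)$ and $(X_i)$. Set $\mathcal{J} = \{F \in M_k(\node) : F(z_i) = 0 \text{ for all } i\}$; the interpolation problem is solvable with $\|F\|_{\infty} \leq 1$ iff $\|[W_\bullet]\|_{M_k(\node)/\mathcal{J}} \leq 1$. Since $M_k(\node)$ is weak-$*$ closed in $M_k(L^\infty) = M_k(L^1)^*$ and the point-evaluation conditions defining $\mathcal{J}$ are weak-$*$ continuous, the quotient has predual $\mathcal{J}_\perp / M_k(\node)_\perp \subseteq M_k(L^1)$. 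By Hahn-Banach, it suffices to verify that every norm-one $\ell$ in this predual satisfies $|\ell([W_\bullet])| \leq 1$. The aim is to express each such $\ell$ in the form
\[
\ell(F) = \sum_{i,j} \operatorname{tr}\bigl(X_j k^{\alpha,\beta}(z_i, z_j) X_i^* F(z_i)\bigr)
\]
for some pair $((\alpha, \beta), (X_i)) \in \G(k\times k) \times M_k(\C)^n$ normalized so that $\sum_{i,j} \operatorname{tr}(X_j k^{\alpha,\beta}(z_i, z_j) X_i^*) = 1$. Granted this, the hypothesized inequality \eqref{eq.trace.condition} applied to the data $(W_i)$ immediately yields $|\ell([W_\bullet])| \leq 1$.

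The main obstacle is this parametrization step: identifying the unit ball of $\mathcal{J}_\perp / M_k(\node)_\perp$ with the family of pairs in $\G(k\times k) \times M_k(\C)^n$ modulo the indicated normalization. This demands an explicit description of $M_k(\node)_\perp \subset M_k(L^1)$, which on the scalar level is governed by $z \overline{B_{\lambda}} H^1$ subject to $\int g = 0$, and a matrix-valued Sarason/Nehari-type inner-outer factorization of its cosets. The normalization condition $\alpha \alpha^* + \beta \beta^* = I_k$ should arise naturally as the book-keeping that records how a general predual functional distributes its mass between the two ramification points $0$ and $\lambda$ of $B_{\lambda}$; the matrix nature of $\alpha, \beta$ is forced by the matrix-valued interpolation data, in parallel with the realization approach of Ball, Bolotnikov, and ter Horst.
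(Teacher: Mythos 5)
Your necessity argument is essentially the paper's: the contractivity of $M_F$ on $(H^2_{\alpha,\beta})_k$, the adjoint formula of Proposition \ref{prop.kernel.function}, and the expansion of $\|v\|^2-\|M_F^*v\|^2$ is exactly the computation carried out in Lemma \ref{lem.necessity}, so that half is fine.

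The sufficiency half, however, has a genuine gap at precisely the point you flag as ``the main obstacle.'' The entire content of the theorem in this direction is the claim that contractive functionals on $\mathcal{J}_\perp/M_k(\node)_\perp$ can be put into a kernel-type form attached to a \emph{single} pair $(\alpha,\beta)\in\G(k\times k)$; you state this parametrization as the plan but do not prove it, and it is exactly what the paper spends most of the section establishing: the description $\mathcal{I}_\perp\subset\overline{\varphi_\lambda B_F}M_k(H^1)$, the density of the set $S$ of cosets that are pointwise nonsingular somewhere, the factorization in $M_k(H^1)$ (Lemma \ref{thm.factorization.M_k(H^1)}, via Rosenblum--Rovnyak inner-outer factorization and Sarason), and then Theorem \ref{thm:factorization}, where $(\alpha,\beta)$ is manufactured from the values $g(0)$, $g(\lambda)$ of the outer factor. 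Without this, the argument is a restatement of the problem in dual form rather than a proof. Moreover, the normal form you posit is stronger than what is available: you ask for $\ell(F)=\sum_{i,j}\operatorname{tr}\bigl(X_jk^{\alpha,\beta}(z_i,z_j)X_i^*F(z_i)\bigr)$, i.e.\ a \emph{vector state} $\ell(F)=\langle P_{\cM}M_FP_{\cM}v,v\rangle$ with one vector $v=\sum_jX_jk^{\alpha,\beta}_{z_j}$ of norm one. The factorization $h=gf^*$ only yields a bilinear form $\langle P_{\cM}M_QP_{\cM}g,P_{\cM}f\rangle$ with two generally different vectors satisfying $\|g\|^2=\|f\|^2=\|h\|_1$ (note $f=\varphi_\lambda B_Ff_0^*$ need not lie in the Hardy space at all), and there is no reason every contractive predual functional should be a vector state of a single compression; the two-vector form is what one can prove, and it still suffices, since $\|P_{\cM}M_QP_{\cM}\|\le1$ (equivalent to \eqref{eq.trace.condition} by Lemma \ref{lem.necessity}) plus Cauchy--Schwarz bounds $|\ell([W_\bullet])|$ --- the same Cauchy--Schwarz step is also silently needed in your ``immediately yields,'' since \eqref{eq.trace.condition} is quadratic in the $W_i$ while $\ell$ is linear. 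Finally, a minor point: after bounding the quotient norm by $1$ you still need an interpolant with $\|F\|_\infty\le1$, i.e.\ attainment of the infimum; the paper gets this by Hahn--Banach extension of $L_Q$ to $M_k(L^1)$ and Riesz representation, and your version would need the analogous weak-$*$ compactness argument.
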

    Before proving this theorem, we need a few general results about $(H^2_{\alpha,\beta})_{k,l,l'}$. To this end, fix some points $F=\{z_1,\ldots,z_n\}\subset\D$, let $B_F$ be the Blaschke product with simple zeroes at $F$ and define the ideal
    $$I=\{f\in \node \mid \forall 1\leq i\leq n,\,f(z_i)=0\}=\node\cap B_F H^{\infty}.$$

    \begin{lem}
        For a fixed $k\in\N$ set $$\mathcal{I}=\{G\in M_k(\node) \mid \forall 1\leq i\leq n\,\ G(z_i)=0\}=M_k(\node)\cap B_FM_k(H^\infty).$$
        Then,    $$\mathcal{I}_{\perp}=M_k(I)_{\perp}\subset\overline{\varphi_{\lambda}B_F}M_k(H^1),$$ with $\varphi_{\lambda}(z)=\frac{z-\lambda}{1-\overline{\lambda}z}=\frac{B_{\lambda}(z)}{z}$.
    \end{lem}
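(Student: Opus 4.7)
The strategy is to exploit the clean containment $B_\lambda B_F \cdot M_k(H^\infty) \subset \mathcal{I}$ and then pass to preannihilators, where the standard description of $M_k(H^\infty)_\perp$ as $M_k(zH^1)$ recalled at the start of the section does most of the work.

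First, the equality $\mathcal{I} = M_k(I)$ is essentially tautological: a matrix belongs to $M_k(\node)$ iff all entries do, and the condition $G(z_i) = 0$ in $M_k(\C)$ is equivalent to each entry vanishing at $z_i$. This gives the stated first equality.

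For the inclusion, I would first verify $B_\lambda B_F \cdot M_k(H^\infty) \subset \mathcal{I}$. Given $H \in M_k(H^\infty)$, the product $F = B_\lambda B_F H$ has each entry vanishing at $0$, $\lambda$, and every $z_i$, since $B_\lambda(0) = B_\lambda(\lambda) = 0$ and $B_F(z_i) = 0$. In particular, each entry satisfies the defining relation $f(0) = f(\lambda)$ of $\node$, so $F \in M_k(\node)$ and $F(z_i) = 0$, i.e.\ $F \in \mathcal{I}$.

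Taking preannihilators reverses inclusions, so $\mathcal{I}_\perp \subset (B_\lambda B_F \cdot M_k(H^\infty))_\perp$. An element $G \in M_k(L^1)$ annihilates $B_\lambda B_F \cdot M_k(H^\infty)$ precisely when $B_\lambda B_F G$ annihilates $M_k(H^\infty)$, i.e.\ when $B_\lambda B_F G \in M_k(zH^1)$. Using $|B_\lambda B_F| = 1$ on $\T$ together with the factorization $B_\lambda = z\varphi_\lambda$, one checks on the circle that $\overline{B_\lambda B_F} \cdot z = \overline{\varphi_\lambda B_F}$, so that this preannihilator equals $\overline{\varphi_\lambda B_F} \cdot M_k(H^1)$. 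This yields the desired inclusion. I do not anticipate a real obstacle; the only delicate point is the last identification on $\T$, which is an immediate consequence of $\bar z = 1/z$ there.
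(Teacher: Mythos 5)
Your argument is correct and follows the paper's proof in all essentials: the key containment $B_\lambda B_F\, (M_k\text{ of })H^\infty$ inside the ideal, inclusion-reversal of preannihilators, and the identification $(M_k(H^\infty))_\perp = M_k(zH^1)$ together with unimodularity of $B_\lambda B_F$ on $\T$ and $B_\lambda = z\varphi_\lambda$. The only (cosmetic) difference is that you work directly at the matrix level, whereas the paper argues in the scalar case and then passes to matrices via $M_k(I)_\perp = M_k(I_\perp)$.
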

    \begin{proof}
        It is known that $(H^{\infty})_{\perp}=H^1_0=zH^1$. Since $B_{\lambda}B_F H^{\infty}\subset I$ and since taking pre-annihilators is inclusion reversing,
        $$I_{\perp}\subset (B_{\lambda}B_F H^{\infty})_{\perp}=\overline{B_{\lambda}B_F}zH^1=\overline{\varphi_{\lambda}B_F}H^1.$$
        Therefore,
        $$\mathcal{I}_{\perp}=M_k(I)_{\perp}=M_k(I_{\perp})\subset\overline{\varphi_{\lambda}B_F}M_k(H^1).$$
    \end{proof}
    \begin{lem}
        The set $$S=\{h\in \mathcal{I}_{\perp}
        \mid \exists s\in\D,\, \det(h(s))\neq 0\}\subset \mathcal{I}_{\perp}$$ is dense in $\mathcal{I}_{\perp}$.
    \end{lem}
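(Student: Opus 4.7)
The plan is to produce a single generic element $h_0 \in \mathcal{I}_\perp$ for which $s \mapsto \det h_0(s)$ is not identically zero on $\D$, and then to approximate any $h \in \mathcal{I}_\perp$ by the affine family $h + \epsilon h_0$ as $\epsilon \to 0$ through values that keep the perturbation inside $S$. The element $h_0$ is constructed by reducing to the scalar case: given any non-zero element $h_0^{\mathrm{sc}}$ of the scalar pre-annihilator $I_\perp$, I set $h_0 := h_0^{\mathrm{sc}} \cdot I_k$. For every $G \in \mathcal{I}$ the scalar trace $\operatorname{tr}(G)$ lies in $I$, so
\[
\frac{1}{2\pi}\int_{\T} \operatorname{tr}(h_0 G)\,d\theta = \frac{1}{2\pi}\int_{\T} h_0^{\mathrm{sc}} \operatorname{tr}(G)\,d\theta = 0,
\]
which shows $h_0 \in \mathcal{I}_\perp$, and $\det h_0 = (h_0^{\mathrm{sc}})^k \not\equiv 0$ as soon as $h_0^{\mathrm{sc}} \neq 0$.

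To produce $h_0^{\mathrm{sc}}$ I write $h_0^{\mathrm{sc}} = \overline{\varphi_\lambda B_F}\,H_0$ with $H_0 \in H^1$ (using the previous lemma) and parametrise $I$ as $\{B_F g : g \in H^\infty,\; B_F(0)g(0) = B_F(\lambda)g(\lambda)\}$. Since $\overline{B_F} B_F = 1$ on $\T$, the pairing against $f = B_F g$ collapses to $\frac{1}{2\pi}\int_\T H_0 g/\varphi_\lambda\, d\theta$; converting this angular integral to a contour integral and summing the residues of $H_0 g / B_\lambda$ at the simple zeros $0$ and $\lambda$ of $B_\lambda$ reduces the annihilation condition on $H_0$ to a single linear relation between $H_0(0)$ and $H_0(\lambda)$. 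A non-trivial degree-one polynomial $H_0$ satisfying this relation obviously exists, yielding the required scalar element.

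For the approximation, given $h \in \mathcal{I}_\perp$ set $h_\epsilon := h + \epsilon h_0 \in \mathcal{I}_\perp$ and choose $s_* \in \D$ with $\det h_0(s_*) \neq 0$; such $s_*$ exists because $\det h_0 \not\equiv 0$ on $\D$. The map $\epsilon \mapsto \det h_\epsilon(s_*) = \det(h(s_*) + \epsilon h_0(s_*))$ is a polynomial in $\epsilon$ of exact degree $k$ with leading coefficient $\det h_0(s_*) \neq 0$, so it has at most $k$ roots in $\C$. For every $\epsilon$ outside this finite set we have $h_\epsilon \in S$, and $\|h_\epsilon - h\|_1 = |\epsilon|\,\|h_0\|_1 \to 0$ as $\epsilon \to 0$ avoiding those finitely many values. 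Hence $h \in \overline{S}$. The only non-routine step is the residue computation producing the generic scalar $h_0^{\mathrm{sc}}$, which relies on $B_F(0), B_F(\lambda) \neq 0$; the rest is standard Zariski-style genericity.
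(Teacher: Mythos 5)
Your argument is correct in substance and follows the same overall strategy as the paper --- approximate an arbitrary element of $\mathcal{I}_{\perp}$ by adding a small perturbation of the form (scalar function)$\cdot I_k$ lying in $\mathcal{I}_{\perp}$, chosen generically so that the determinant is nonzero at one point of $\D$ --- but your construction of the perturbing element is genuinely different and much heavier than what is needed. The paper perturbs by $\delta z I_k$: since $zM_k(H^1)$ is the preannihilator of $M_k(H^{\infty})$ and $\mathcal{I}\subset M_k(H^{\infty})$, one gets $zI_k\in\mathcal{I}_{\perp}$ for free, and choosing $\delta$ with $\frac{\delta}{2}\notin\sigma\bigl(f(\frac{1}{2})\bigr)$ (a finite set) makes $f-\delta zI_k$ invertible at $\frac{1}{2}$ while $\|\delta zI_k\|_1=\delta k$ is small. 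Your residue computation producing $h_0^{\mathrm{sc}}=\overline{\varphi_\lambda B_F}\,H_0$ is correct --- the annihilation condition does collapse to the single relation $H_0(0)B_F(\lambda)=(1-|\lambda|^2)H_0(\lambda)B_F(0)$ --- but it buys nothing over the trivial choice $h_0^{\mathrm{sc}}=z\in zH^1\subset I_{\perp}$, and it imports a hypothesis the lemma does not make: you need $B_F(0),B_F(\lambda)\neq 0$, i.e.\ $0,\lambda\notin F$, whereas the lemma is stated for an arbitrary finite $F\subset\D$ (and the paper later allows one of $0,\lambda$ to lie in $F$). This is easily patched (if $0\in F$ the relation forces $H_0(0)=0$, so $H_0=z$ works, and similarly in the remaining cases), as is the small unstated point that $s_*$ should be taken outside $F\cup\{0,\lambda\}$ so that $h(s_*)$ is defined when elements of $\mathcal{I}_{\perp}\subset\overline{\varphi_\lambda B_F}M_k(H^1)$ are evaluated in $\D$; the paper's choice of perturbation sidesteps both issues. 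On the other hand, your $\epsilon$-avoidance step (the polynomial $\epsilon\mapsto\det\bigl(h(s_*)+\epsilon h_0(s_*)\bigr)$ has exact degree $k$ and hence at most $k$ roots) is a clean, slightly more uniform substitute for the paper's spectrum-avoidance at the single point $\frac{1}{2}$, and needs no case distinction on whether $\det h$ already vanishes identically.
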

    \begin{proof}
        Fix $\varepsilon>0$ and let $f\in \mathcal{I}_{\perp}$ be such that for every $s$, $\det(f(s))=0$. Note that
        $f(\frac{1}{2})$ has a finite spectrum. Hence there exists $0<\delta$ such that $\frac{\delta}{2}\not\in\sigma(f(\frac{1}{2}))$ and $\delta\cdot k<\varepsilon$. Now, $f-\delta zI_k$ is invertible at $\frac{1}{2}$ and $f-\delta zI_k\in S$ as $f,\,\delta zI_k\in\mathcal{I}_{\perp}$. Moreover,
        $$\|f-(f-\delta z I_k)\|_1=\delta\cdot \|I_k\|_1=\delta\cdot k<\varepsilon.$$
    \end{proof}

    Recall from \cite{nagy2010harmonic}, that a function $f\in M_{k \times \ell}(H^{\infty})$ is called \textit{inner} if $f(e^{i\theta})$ is an isometry almost everywhere on $\T$ (in particular, $\ell \leq k$). Additionally, we will say that $g \in M_{s \times t}(H^2)$ is \text{right outer} (respectively, \text{left outer}), if $\overline{M_s(H^{\infty})g}=M_{s\times t}(H^2)$ (respectively, $\overline{g M_t(H^{\infty})}=M_{s\times t}(H^2)$.) More generally, by \cite[Definition B in Section 5.2]{RosenblumRovnyak}, a holomorphic operator-valued function $g$ is outer, if there exists a bounded analytic (in fact, outer) function $\varphi$, such that $\varphi g$ is bounded and outer. Though the definition in \cite{RosenblumRovnyak} of an outer function slightly differs from ours, they boil down to the same thing, if one restricts attention to the subspace they call $M_{out}$. Moreover, by \cite[Theorem A in Section 5.4]{RosenblumRovnyak}, the definition is independent of $\varphi$ and by \cite[Theorem B in Section 5.4]{RosenblumRovnyak}, an outer function has pointwise dense range. Since we only consider finite-dimensional Hilbert spaces, the latter is just surjectivity. 
    
    \begin{lem}
        Let $g\in M_k(H^\infty)$, such that $g^T$ is left outer. Then, $\det(g(z))\neq 0$ for all $z\in\D$. A similar result holds for the Nevanlinna class outer functions.
    \end{lem}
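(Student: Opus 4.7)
The plan is to reduce the claim to the fact (recalled in the paragraph above the lemma) that an operator-valued outer function has pointwise dense range, which in the finite-dimensional setting is just surjectivity. All remaining work will be elementary linear algebra on $M_k(\C)$.

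First I would unpack the hypothesis. By definition, $g^T$ left outer in $M_k(H^2)$ means $\overline{g^T M_k(H^\infty)} = M_k(H^2)$. Fix $z_0 \in \D$. Point evaluation at $z_0$ is bounded on $M_k(H^2)$ (continuity of $F \mapsto F(z_0)$ follows from the reproducing kernel $k_{z_0}$ applied entrywise), so the norm-density hypothesis transfers to density at $z_0$: the subspace
\[
\{g^T(z_0) h(z_0) : h \in M_k(H^\infty)\} \;=\; g^T(z_0)\, M_k(\C)
\]
is dense in $\{F(z_0) : F \in M_k(H^2)\} = M_k(\C)$ (the latter equality because $M_k(\C) \subset M_k(H^2)$ via constants). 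In a finite-dimensional space, density means equality, so the map $A \mapsto g^T(z_0) A$ is surjective on $M_k(\C)$. This is equivalent to $g^T(z_0)$ being invertible as a $k \times k$ matrix (a right inverse to $g^T(z_0)$ in a finite-dimensional matrix algebra is a two-sided inverse), and since $\det g(z_0) = \det g^T(z_0)$ we conclude $\det g(z_0) \neq 0$ for every $z_0 \in \D$.

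For the Nevanlinna-class statement, I would use the recalled definition: there is a bounded outer $\varphi$ with $\varphi g^T$ bounded outer. Apply the bounded case to $\varphi g^T$ to obtain $\det(\varphi g^T)(z_0) \neq 0$. If $\varphi$ is scalar (as the wording suggests), the fact that a nonzero scalar bounded outer function is zero-free on $\D$ gives $\varphi(z_0) \neq 0$, and $\det(\varphi g^T)(z_0) = \varphi(z_0)^k \det g(z_0)$ yields $\det g(z_0) \neq 0$; if $\varphi$ is matrix-valued, apply the bounded case recursively to $\varphi$ itself to see $\det \varphi(z_0) \neq 0$, then divide.

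The only non-formal step is the transfer from module-theoretic density $\overline{g^T M_k(H^\infty)} = M_k(H^2)$ to pointwise density of $g^T(z_0) M_k(\C)$ in $M_k(\C)$, and this is just continuity of point evaluation on $H^2$. The rest is bookkeeping between the left/right outer conventions and basic linear algebra, so I expect no serious obstacle.
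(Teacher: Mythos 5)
Your proof is correct and follows essentially the same route as the paper: both transfer the norm-density hypothesis $\overline{g^T M_k(H^\infty)}=M_k(H^2)$ to a pointwise statement at $z_0$ via boundedness of point evaluation, and then conclude invertibility of $g^T(z_0)$ by finite-dimensional linear algebra (the paper does this by taking a sequence $f_n$ with $g^Tf_n\to I_k$ and using continuity of the determinant, you by observing that $g^T(z_0)M_k(\C)=M_k(\C)$ forces a right, hence two-sided, inverse). Your treatment of the Nevanlinna case by dividing out a scalar bounded outer factor is likewise the paper's one-line argument.
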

    \begin{proof}
        Let $K(z,w)=\frac{1}{1-z\overline{w}}I$ be matricial Szego kernel.
        As $g^T$ is left outer, there exists a sequence $\{f_n\}\subset M_k(H^\infty)$, such that $\lim_{n\to \infty} g^Tf_n = I_k$ in $M_k(H^2)$. Hence, for every $z\in\D$ and $1\leq i,j\leq k$,
        $$(g^T(z)f_n(z))_{i,j}=\ip{g^T(z)f_n(z)}{E_{i,j}}=\ip{g^Tf_n}{E_{i,j}K(\cdot,z)}\to \ip{I_k}{E_{i,j}K(\cdot,z)}=\ip{I_k}{E_{i,j}}=(I_k)_{i,j},$$
        i.e., $g^T(z)f_n(z)\to I_k$ entry-wise. By continuity of the determinant,
        $$\det(g^T(z)f_n(z))\to\det(I_k)=1.$$
        Therefore, $\det(g^T(z))=\det(g(z))\neq 0$. The second claim follows immediately by multiplying by a non-vanishing scalar-valued bounded function that will make $g$ bounded.
    \end{proof}
     The following factorization result in $M_k(H^1)$ is, probably, well-known to experts. However, since we do not have a good reference, we include the proof here.
     \begin{lem}
        Every function $h\in M_k(H^1)$ can be factored as \[h=g\cdot f\] with $f\in M_{l\times k}(H^2)$,\,$g\in M_{k\times l}(H^2)$,\, $l \leq k$, $g^T$ left outer, and $\|g\|_2^2=\|f\|^2_2=\|h\|_1$.\label{thm.factorization.M_k(H^1)}
    \end{lem}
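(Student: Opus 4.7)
The plan is to adapt the classical scalar $H^1 = H^2 \cdot H^2$ inner-outer factorization to the matrix setting by combining a measurable polar decomposition on the boundary with a matrix-valued Szegő / Helson-Lowdenslager theorem.

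Assume $h \not\equiv 0$. Using analyticity of the minors of $h$ (each of which lies in some Hardy class $H^{1/j}$ and so is either identically zero or nonvanishing a.e.\ on $\T$), one sees that the boundary values $h(\zeta)$ have a well-defined essential rank $l \le k$, equal to the maximal rank of $h$ on $\D$. I would perform a measurable polar decomposition $h(\zeta) = u(\zeta)\,p(\zeta)$ a.e.\ on $\T$, where $p := (h^*h)^{1/2} \in M_k(L^1)_+$ has rank $l$ (with $\|p\|_1 = \|h\|_1$) and $u$ is a measurable partial isometry whose initial space is $\mathrm{range}(p)$. A matrix Szegő / Helson-Lowdenslager type theorem applied to the positive $L^1$ weight $p$ then produces $f \in M_{l\times k}(H^2)$, outer in the appropriate sense, with $f^* f = p$ a.e.; the required log-integrability hypothesis is secured by selecting an $l \times l$ minor of $h$ whose determinant is a nonzero Smirnov-class function.

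Set $g(\zeta) := u(\zeta) f(\zeta)^* \in M_{k \times l}$ a.e.\ on $\T$. Then $gf = u f^* f = up = h$ a.e., and since $u^* u$ projects onto $\mathrm{range}(p) = \mathrm{range}(f^*)$ we have $f u^* u = f$, so
\[
\|g\|_2^2 = \int_{\T} \operatorname{tr}(f u^* u f^*) = \int_{\T} \operatorname{tr}(f f^*) = \int_{\T} \operatorname{tr}(p) = \|h\|_1 = \|f\|_2^2 .
\]
It remains to check that $g$ is genuinely analytic (not merely $L^2$) and that $g^T$ is left outer. For analyticity, I would exploit that $f$ is outer: there exist $\Psi_n \in M_{k \times l}(H^\infty)$ with $f \Psi_n \to I_l$ in $M_l(H^2)$, whence $g \cdot f \Psi_n = h \Psi_n \in M_k(H^1)$ and $g \cdot f \Psi_n \to g$ in $M_{k \times l}(L^1)$ because $g \in L^2$ and $f \Psi_n \to I_l$ in $L^2$; since $H^1$ is closed in $L^1$, we conclude $g \in M_{k \times l}(H^1) \cap M_{k \times l}(L^2) = M_{k \times l}(H^2)$. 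To arrange that $g^T$ is left outer, I would post-process by an inner-outer factorization of $g^T$ in $M_{l \times k}(H^2)$ and absorb the inner factor into $f$; inner multiplication is isometric, so both the norm equalities and the identity $gf = h$ are preserved. The main obstacle is handling the Szegő step in the rank-deficient case $l < k$ and making the outerness/analyticity argument rigorous; both are standard in spirit but not entirely textbook, which likely explains why the authors chose to include the proof.
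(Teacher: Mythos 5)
Your overall architecture (boundary polar decomposition, an analytic square root of the weight, outerness of $f$ forcing analyticity of $g$, then absorbing an inner factor) is coherent and genuinely different from the paper's, but its central step has a real gap: the existence of an outer $f\in M_{l\times k}(H^2)$ with $f^*f=p:=(h^*h)^{1/2}$ in the rank-deficient case $l<k$ is not something a Szeg\H{o}/Helson--Lowdenslager theorem applied to the weight $p$ can deliver. The classical matrix theorem treats full-rank weights, with hypothesis $\log\det p\in L^1$, which fails identically here; and no rank-deficient analogue whose hypothesis is as weak as ``some $l\times l$ minor has log-integrable determinant'' can exist, because rank-deficient positive weights need not be analytically factorable at all. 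For instance, with $k=2$, $l=1$, take $p=\left(\begin{smallmatrix}1&\psi\\ \overline{\psi}&1\end{smallmatrix}\right)$ with $|\psi|=1$ a.e.\ on $\T$: a factorization $p=f^*f$ with $f=(f_1,f_2)$ analytic forces $f_1,f_2$ to be inner and $\psi=\overline{f_1}f_2$, so any unimodular $\psi$ that is not a boundary quotient of inner functions (not of bounded type) gives a non-factorable weight all of whose $1\times1$ minors are log-integrable. Hence factorability of your $p$ must be extracted from the analyticity of $h$ itself, which your proposal does not actually use at that step. Making this precise is exactly the content of the input the paper relies on: the Rosenblum--Rovnyak matrix inner--outer factorization of $h$ (splitting off an inner factor and reducing to an outer $H^1$ factor of full row rank), followed by Sarason's Theorem~4, which factors that $H^1$ matrix as a product of two $H^2$ matrices with $\tilde f^*\tilde f=(h_o^*h_o)^{1/2}$ and $g^*g=\tilde f\tilde f^*$. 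If you substitute this for your Szeg\H{o} step, the rest of your plan goes through, but at that point you have essentially rejoined the paper's proof.

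Two smaller points. Your analyticity argument for $g$ (approximating $I_l$ by $f\Psi_n$ and using closedness of $H^1$ in $L^1$) is a nice touch and is correct once one notes that left outerness of $f$ lets you approximate $(I_l\;0\;\cdots)$ and truncate columns. However, in the final post-processing, ``inner multiplication is isometric'' does not by itself give $\|v^t f\|_2=\|f\|_2$: you multiply $f$ on the left by $v^t$, which is a.e.\ a coisometry rather than an isometry, so the norm could a priori drop. Equality does hold, but only because $ff^*=g^*g$ a.e.\ in your construction (since $u^*uf^*=f^*$), which forces the columns of $f(\zeta)$ to lie in the range of $\overline{v(\zeta)}$; this is precisely the bookkeeping the paper carries out when it absorbs the inner factor of $g^t$ into $\tilde f$.
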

    \begin{proof}
        By \cite[Theorem in Section 5.7]{RosenblumRovnyak}, $h^T$ can be
        factored as
        $h^T=h_i^Th_o^T$, with $h_i^T$ a $k\times l'$ inner function and $h_o^T\in M_{l'\times k}(H^1)$ (for some $l'\leq k$) a left outer function. Note that, though the theorem of Rosenblum and Rovnyak yields a factorization in the Nevanlinna class, by \cite[Theorem A in Section 4.7]{RosenblumRovnyak}, the Hardy classes are the intersection of the Nevanlinna class with the corresponding $L^p$ space. Since the inner is almost everywhere an isometry on the unit circle, we get that the outer function is in the corresponding Hardy class. Next, by \cite[Theorem 4]{sarason1967generalized} we can factor $h_o$ as
        \[h_o=g\tilde{f}\]
        with $g\in M_{k\times l}(H^2)$,\,$\tilde{f}\in M_{l\times l'}(H^2)$ and
        $\tilde{f}^*\tilde{f}=(h_o^*h_o)^{\frac{1}{2}},\,g^*g=\tilde{f}\tilde{f}^*$ on $\bb{T}$. Note that \cite[Theorem 4]{sarason1967generalized} is for the square case. Since $h_o \in M_{k \times l'}(H^1)$, we can complete it with zeroes to a square matrix. Factor it and then take from $\tilde{f}$ only the first $l'$ columns. Moreover, we can take the inner-outer factorization of $g^t = v g'$ and replace $\tilde{f}$ by $v^t \tilde{f}$. Note that since $v$ is almost everywhere an isometry on $\bb{T}$, we have that for almost every $z \in \bb{T}$, $v(z)^t \tilde{f}(z) (v(z)^t(z) \tilde{f}(z))^* = (g'(z))^{t *} g'(z)^t$. Namely, the same condition holds. Moreover, by the definition of the norms in $H^2$, we have that $\|g\|_2^2 = \|g'\|_2^2$. Hence, if we set $f = \tilde{f} h_i$, the decomposition
        $$h=h_oh_i=g\tilde{f}h_i=g\cdot f$$
        satisfies the desired properties.
    \end{proof}
   
    \begin{thm} \label{thm:factorization}
        For every $h\in S$, there exists some $(\alpha,\beta)\in\bb{G}(k\times k)$, such that         $$h=gf^*$$ with $g\in (H^2_{\alpha,\beta})_k$, $f\in M_k(L^2)\ominus ((H^2_{\alpha,\beta})_k\cap B_FM_k(H^2))$, $g^T$ left outer and $\|g\|^2_2=\|f\|^2_2=\|h\|_1$.
    \end{thm}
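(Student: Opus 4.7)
The plan is to peel off the unimodular factor $\overline{\varphi_\lambda B_F}$, apply the factorization lemma for $M_k(H^1)$, and read off $(\alpha,\beta)$ from the resulting left-outer factor evaluated at $0$ and $\lambda$. Since $\mathcal{I}_\perp \subset \overline{\varphi_\lambda B_F} M_k(H^1)$, I will write $h = \overline{\varphi_\lambda B_F}\, \tilde h$ with $\tilde h \in M_k(H^1)$ and apply Lemma \ref{thm.factorization.M_k(H^1)} to obtain $\tilde h = g_0 f_0$ with $g_0 \in M_{k \times l}(H^2)$, $f_0 \in M_{l \times k}(H^2)$, $g_0^T$ left outer, and $\|g_0\|_2^2 = \|f_0\|_2^2 = \|h\|_1$. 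Because $h \in S$, there exists $s \in \D$ with $\det h(s) \neq 0$; interpreting $h$ on $\D$ via the meromorphic formula $h = \tilde h/(\varphi_\lambda B_F)$, invertibility of $h(s)$ will force invertibility of $\tilde h(s) = g_0(s) f_0(s)$, hence $l = k$. The earlier lemma on left-outer functions then gives $\det g_0(z) \neq 0$ on all of $\D$.

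Next I choose $(\alpha,\beta) \in \G(k \times k)$ so that $g := g_0$ lies in $(H^2_{\alpha,\beta})_k$. Set $X = g_0(0)$ (invertible), $Z = (g_0(\lambda) - g_0(0))/f_\lambda(\lambda)$, and $M = (XX^* + ZZ^*)^{1/2}$, which is invertible since $XX^*$ is positive definite. Defining $\alpha = M^{-1}X$ and $\beta = M^{-1}Z$, a direct computation gives $\alpha\alpha^* + \beta\beta^* = I_k$, while $g_0(0) = M\alpha$ and $g_0(\lambda) = M(\alpha + f_\lambda(\lambda)\beta)$ place $g \in (H^2_{\alpha,\beta})_k$. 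Setting $f := \varphi_\lambda B_F f_0^*$, one has $h = gf^*$ on $\T$ and, since $\varphi_\lambda B_F$ is unimodular there, $\|g\|_2^2 = \|f\|_2^2 = \|h\|_1$.

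The main content, and the hardest step, will be the orthogonality $f \perp (H^2_{\alpha,\beta})_k \cap B_F M_k(H^2)$. Given $G = B_F G_1$ in this intersection, $|B_F| = 1$ on $\T$ collapses $\ip{f}{G}$ to $\frac{1}{2\pi}\int_\T \varphi_\lambda \overline{\operatorname{tr}[G_1 f_0]}\, d\theta$. Writing $\psi = \operatorname{tr}[G_1 f_0] \in H^1$ and using $\overline{\varphi_\lambda} = 1/\varphi_\lambda$ on $\T$, a residue computation at the poles of $\psi/\varphi_\lambda$ inside $\D$ will give $\overline{\ip{f}{G}} = \frac{1}{\lambda}\bigl[(1-|\lambda|^2)\psi(\lambda) - \psi(0)\bigr]$. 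Substituting $G_1(0) = M_G M^{-1} g(0)/B_F(0)$, $G_1(\lambda) = M_G M^{-1} g(\lambda)/B_F(\lambda)$, and $\tilde h = g f_0$, non-degeneracy of the trace pairing as $M_G$ ranges over $M_k(\C)$ reduces the vanishing of $\ip{f}{G}$ to the matrix identity $B_F(\lambda)\tilde h(0) = (1-|\lambda|^2) B_F(0)\tilde h(\lambda)$. I will derive this last identity by testing $h \in \mathcal{I}_\perp$ against $H = B_F(1 + cz) C \in \mathcal{I}$ for arbitrary $C \in M_k(\C)$, with $c = (B_F(0) - B_F(\lambda))/(\lambda B_F(\lambda))$ chosen so that $B_F(0) = B_F(\lambda)(1 + c\lambda)$ and hence $H(0) = H(\lambda)$; the same residue computation, run in reverse, produces exactly the needed matrix identity.

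The subtlest point is the rank upgrade $l = k$: one must carefully interpret ``$h(s)$'' for $h \in \mathcal{I}_\perp$ as the meromorphic extension $\tilde h(s)/(\varphi_\lambda(s) B_F(s))$ on $\D \setminus (F \cup \{\lambda\})$ so that $\det h(s) \neq 0$ genuinely transfers to $\tilde h(s)$. A secondary concern is confirming that every $M_G \in M_k(\C)$ is realized by some admissible $G$ in the intersection, which reduces to solving $n$ Lagrange-type interpolation conditions on the $B_\lambda M_k(H^2)$-component of $G$ to arrange $G(z_i) = 0$ for $z_i \in F$.
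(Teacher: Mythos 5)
Your first half coincides with the paper's: the same peeling off of $\overline{\varphi_\lambda B_F}$, the same appeal to Lemma \ref{thm.factorization.M_k(H^1)}, essentially the same reading of $(\alpha,\beta)$ from $g(0)$ and $g(\lambda)$ (your $M,\alpha,\beta$ are the paper's $U,\alpha,\beta$ up to normalization), and the same $f=\varphi_\lambda B_F f_0^*$; the square case $l=k$ via $\det h\not\equiv 0$ is also the paper's step, and your explicit meromorphic reading of ``$h(s)$'' is a fair rendering of what the paper leaves implicit. Where you genuinely diverge is the orthogonality $f\perp (H^2_{\alpha,\beta})_k\cap B_FM_k(H^2)$. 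The paper proves it by showing $\overline{\mathcal{I}g}=(H^2_{\alpha,\beta})_k\cap B_FM_k(H^2)$: since $g$ is right outer, any $\psi$ in the intersection is approximated by $\psi_m g$ with $\psi_m\in M_k(H^\infty)$, and $\psi_m$ is corrected first into $M_k(\node)$ (subtracting $(\psi_m(\lambda)-\psi_m(0))f_\lambda/f_\lambda(\lambda)$, which tends to $0$ because $\psi(0)g(0)^{-1}=\psi(\lambda)g(\lambda)^{-1}$) and then into $\mathcal{I}$ via Lagrange-type $P_i$; orthogonality to $\mathcal{I}g$ is immediate from $h\in\mathcal{I}_\perp$. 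You replace this density argument by a closed-form computation: for $G=B_FG_1$ in the intersection, $\ip{f}{G}$ collapses, via the Cauchy integral of $\overline{\varphi_\lambda}\,u$ with $u=\operatorname{tr}[G_1f_0]$, to $\frac{1}{\lambda}\bigl[(1-|\lambda|^2)u(\lambda)-u(0)\bigr]$, and the needed matrix identity $(1-|\lambda|^2)B_F(0)\tilde h(\lambda)=B_F(\lambda)\tilde h(0)$ is extracted by pairing $h$ against the explicit family $B_F(1+cz)C\in\mathcal{I}$. I checked both the residue formula and the derivation of the identity; they are correct. Your route is more elementary and self-contained for this step (no outer-approximation needed), while the paper's density argument identifies the whole subspace $\overline{\mathcal{I}g}$, which is structurally more informative.

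Two caveats. First, your argument silently assumes $0,\lambda\notin F$: you divide by $B_F(0)$ and $B_F(\lambda)$, and your $c$ has $B_F(\lambda)$ in the denominator, whereas the theorem (and the paper's proof, which explicitly allows one of $0,\lambda$ to lie in $F$) does not exclude these nodes. The degenerate cases are patchable in your framework, e.g.\ if $\lambda\in F$ then pairing $h$ with $zB_FC$ for all $C\in M_k(\C)$ gives $\tilde h(\lambda)=0$, hence $f_0(\lambda)=0$, and the two-point identity degenerates correctly, but as written this is a restriction the paper's proof does not have. Second, your ``secondary concern'' about realizing every $M_G$ is unnecessary: you only need the implication from the matrix identity to $\ip{f}{G}=0$, which holds for whichever $M_G$ actually occur; non-degeneracy of the trace pairing is needed only where you quantify over all $C\in M_k(\C)$, and there it is available.
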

    \begin{proof}
        Write $h=\overline{\varphi_\lambda(z)B_F}h'$ with $h'\in M_k(H^1)$. By Lemma \ref{thm.factorization.M_k(H^1)}, $h'$ can be factored as $gf_0$ with $f_0,\,g\in M_k(H^2)$, $g^T$ outer and $\|g\|_2^2=\|f_0\|_2^2=\|h\|_1$. Note that all matrices are square, since $h \in S$ and, therefore, $\det h$ is not identically zero.
        We first factor $h$ as
        $$h=\overline{\varphi_\lambda B_F}gf_0.$$ Taking $f=\varphi_{\lambda}(z)B_Ff_0^*$ and observing that $\|f\|_2^2=\|f_0\|_2^2$ (as both $\varphi_{\lambda}$ and $B_F$ are inner functions and $\|f_0\|^2_2=\|f_0^*\|_2^2$) finishes the factorization.
        \\\\ Now let $$\tilde{\alpha}=I_k,\, \tilde{\beta}=\frac{1}{f_{\lambda}(\lambda)}\left(g(0)^{-1}g(\lambda)-I_k\right),\,\tilde{U}=g(0)$$ and define
        $$\alpha=\left(I_k+\tilde{\beta}\tilde{\beta}^*\right)^{-\frac{1}{2}},\,\beta=\left(I_k+\tilde{\beta}\tilde{\beta}^*\right)^{-\frac{1}{2}}\tilde{\beta},\,U=\tilde{U}\left(I_k+\tilde{\beta}\tilde{\beta}^*\right)^{\frac{1}{2}}.$$
        Note that $U,\,\alpha,\,\alpha+f_{\lambda}\beta$ are invertible (as $h\in S$ and $g^t$ is left outer)  and that $\alpha\alpha^*+\beta\beta^*=I_k$ and $U\alpha=g(0),\,U\left(\alpha+f_{\lambda}(\lambda)\beta\right)=g(\lambda)$, thus $g\in (H^2_{\alpha,\beta})_k$. Lastly,
        to prove $f$ satisfies the desired property, we first observe that for every $\phi\in\mathcal{I}$,
        $$\ip{\phi g}{f}=\frac{1}{2\pi}\int_{\T}\phi gf^*=\frac{1}{2\pi}\int_{\T}\phi h=0,$$ that is $f\in (\mathcal{I}g)^{\perp}$. Therefore, it is enough to prove that $\overline{\mathcal{I}g}=(H^2_{\alpha,\beta})_k\cap B_FM_k(H^2)$. First, it is clear that $$\mathcal{I}g\subset (H^2_{\alpha,\beta})_k\cap B_FM_k(H^2).$$ 
        
        For the other direction, let $\psi\in (H^2_{\alpha,\beta})_k\cap B_FM_k(H^2)$, and write
        $$\psi(\lambda)=V(\alpha+f_{\lambda}(\lambda)\beta),\,\psi(0)=V\alpha.$$
        As $g^T$ is left-outer, $g$ is right-outer, and therefore $\overline{M_k(H^{\infty})g}=M_k(H^2)$. So there exists a sequence $\{\psi_m\}_m\subset M_k(H^{\infty})$, such that $\psi_mg\to \psi$. Write
        $$\psi_m=\psi_m(0)+\left(\psi_m(\lambda)-\psi_m(0)\right)\frac{f_{\lambda}}{f_{\lambda}(\lambda)}+B_\lambda \tilde{\psi}_m,$$ with $\tilde{\psi}_m\in M_k\left(\node\right)$.
        Therefore,
        $$\varphi_m=\psi_m -\left(\psi_m(\lambda)-\psi_m(0)\right)\frac{f_{\lambda}}{f_{\lambda}(\lambda)}\in M_k(\node).$$
        Moreover,
        $$\|\varphi_mg-\psi\|\leq \|\psi_mg-\psi\|+\left\|\frac{f_{\lambda}}{f_{\lambda}(\lambda)}\right\|\left\|\psi_m(\lambda)-\psi_m(0)\right\|\to 0,$$
        where in the last step we used that
        $$\lim_{m\to\infty}(\psi_m(\lambda)-\psi_m(0))=\psi(\lambda)g(\lambda)^{-1}-\psi(0)g(0)^{-1}=VU^{-1}-VU^{-1}=0.$$
        Lastly, we can always assume that at most one of $0$ and $\lambda$ is in $F$ (the set of points that defines the ideal $\cI$) - otherwise, we can take one out without changing $\cI$.
        Hence, by entry-wise application of Lagrange interpolation theorem, we have polynomials $\{P_{i}\}\subset \node$ such that
        $P_{i}(z_j)=\delta_{i,j}I_k$. Now consider the sequence $\varphi_m-\sum_{i=1}^n \varphi_m(z_i)P_i$. Since $\varphi_m(z_i) g(z_i) \to \psi(z_i) = 0$ for every $1 \leq i \leq n$ and $g(z_i)$ is invertible, 
        $$\|(\varphi_m-\sum_{i=1}^n \varphi_m(z_i)P_i)g-\psi\|\leq \|\varphi_mg-\psi\|+\sum_{i=1}^n\|\varphi_m(z_i)\|\|P_ig\|\to 0.$$ Moreover, $(\varphi_m-\sum_{i=1}^n\varphi_m(z_i)P_i)g\in\mathcal{I}g$, concluding the proof.
    \end{proof}
    For $(\alpha,\beta)\in\bb{G}(l'\times l)$, let 
    $$(\mathcal{M}_{\alpha,\beta})_{k,l',l}=\{Xk^{\alpha,\beta}(\cdot ,z_j) \mid X\in M_{k\times l}(\C),\,1\leq j\leq n\}.$$
    It is an immediate consequence of the reproducing property that this subspace is equal to 
    $$(H^2_{\alpha,\beta})_{k,l',l}\ominus((H^2_{\alpha,\beta})_{k,l',l}\cap B_F M_{k\times l}(H^2)).$$

        \begin{remark}
            If $l'=l=k$ we abbreviate this notation by $(\mathcal{M}_{\alpha,\beta})_k$.
        \end{remark}
    \begin{lem}
        Assume $F\in M_k(H^{\infty}_{\operatorname{node}})$ satisfies $F(z_i)=W_i$ for all $1\leq i\leq n$. Then, for $(\alpha, \beta) \in \G(k \times k)$, equation \eqref{eq.trace.condition} holds for every $(X_1,\ldots,X_n) \in (M_k(\C))^n$ if and only if the operator $P_{(\mathcal{M}_{\alpha,\beta})_k}M_F|_{(\mathcal{M}_{\alpha,\beta})_k}$ is a contraction.
            \label{lem.necessity}
    \end{lem}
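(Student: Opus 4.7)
The plan is to recognize that contractivity of $T := P_{(\mathcal{M}_{\alpha,\beta})_k}M_F|_{(\mathcal{M}_{\alpha,\beta})_k}$ is just a restatement of \eqref{eq.trace.condition} after unpacking both sides using the reproducing kernel formula from Proposition \ref{prop.kernel.function}. The key observation is that $(\mathcal{M}_{\alpha,\beta})_k$ is spanned by the kernel sections $X k^{\alpha,\beta}(\cdot,z_i)$ for $X \in M_k(\C)$ and $1 \le i \le n$, and by Proposition \ref{prop.kernel.function} it is invariant under $M_F^*$, with
\[
M_F^*\bigl(X_i k^{\alpha,\beta}(\cdot,z_i)\bigr) = F(z_i)^* X_i k^{\alpha,\beta}(\cdot,z_i) = W_i^* X_i k^{\alpha,\beta}(\cdot,z_i).
\]
Consequently $T^*$ is nothing but $M_F^*$ restricted to $(\mathcal{M}_{\alpha,\beta})_k$, and $T$ is a contraction if and only if $\|T^*v\|^2 \le \|v\|^2$ for every $v \in (\mathcal{M}_{\alpha,\beta})_k$.

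Next, I would pick an arbitrary $v = \sum_{i=1}^n X_i k^{\alpha,\beta}(\cdot,z_i)$ and compute both norms using the reproducing identity $\langle f, X k^{\alpha,\beta}_w\rangle = \operatorname{tr}(f(w)X^*)$. Applying it to $f = X_j k^{\alpha,\beta}(\cdot,z_j)$ and using $k^{\alpha,\beta}(z_i,z_j) = k^{\alpha,\beta}(z_j,z_i)^*$ together with cyclicity of the trace gives
\[
\|v\|^2 = \sum_{i,j=1}^n \operatorname{tr}\!\bigl[X_j k^{\alpha,\beta}(z_i,z_j) X_i^*\bigr],
\]
and applying the same identity to $T^*v = \sum_i W_i^* X_i k^{\alpha,\beta}(\cdot,z_i)$ yields
\[
\|T^*v\|^2 = \sum_{i,j=1}^n \operatorname{tr}\!\bigl[W_j^* X_j k^{\alpha,\beta}(z_i,z_j) X_i^* W_i\bigr],
\]
after a relabeling of indices and another use of cyclicity.

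Subtracting, the difference $\|v\|^2 - \|T^*v\|^2$ is exactly the left-hand side of \eqref{eq.trace.condition}. Therefore $T^*$ (hence $T$) is a contraction if and only if \eqref{eq.trace.condition} holds for every $(X_1,\dots,X_n) \in (M_k(\C))^n$, proving both implications simultaneously.

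The main (though minor) obstacle is purely bookkeeping: tracking the interplay between the $k$ row and column indices of the $X_i$'s and the $l \times l$ matrix-valued kernel, and applying the trace cyclicity and the identity $k^{\alpha,\beta}(z_i,z_j)^* = k^{\alpha,\beta}(z_j,z_i)$ carefully so that the expression matches \eqref{eq.trace.condition} on the nose. There is no conceptual difficulty beyond this.
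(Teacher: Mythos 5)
Your proposal is correct and follows essentially the same route as the paper: both reduce contractivity of $P_{(\mathcal{M}_{\alpha,\beta})_k}M_F|_{(\mathcal{M}_{\alpha,\beta})_k}$ to the inequality $\|M_F^*v\|^2\le\|v\|^2$ on vectors $v=\sum_i X_i k^{\alpha,\beta}(\cdot,z_i)$ spanning $(\mathcal{M}_{\alpha,\beta})_k$, and then expand both norms with the reproducing identity and the eigenvector relation $M_F^*X_ik^{\alpha,\beta}(\cdot,z_i)=F(z_i)^*X_ik^{\alpha,\beta}(\cdot,z_i)$ to recover exactly the trace condition \eqref{eq.trace.condition}.
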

    \begin{proof}
        Indeed, fix $k\in\mathbb{N}$ and let $F$ satisfy $F(z_i)=W_i$. Recall that $P_{(\mathcal{M}_{\alpha,\beta})_k}M_F|_{(\mathcal{M}_{\alpha,\beta})_k}$ is a contraction if and only if 
        $$I-P_{(\mathcal{M}_{\alpha,\beta})_k}M_F M_F^*|_{(\mathcal{M}_{\alpha,\beta})_k}\geq 0.$$
        This is true if and only if 
        \begin{equation}
\begin{aligned}
0 
&\leq \left\langle 
\left( I - P_{(\mathcal{M}_{\alpha,\beta})_k} M_F M_F^* \Big|_{(\mathcal{M}_{\alpha,\beta})_k} \right)
\left( \sum_{j=1}^n X_j\, k^{\alpha,\beta}(\cdot, z_j) \right),
\sum_{i=1}^n X_i\, k^{\alpha,\beta}(\cdot, z_i)
\right\rangle 
\end{aligned}
\end{equation}
for every $\sum_{i=1}^nX_ik^{\alpha,\beta}(\cdot,z_i)\in\mathcal{M}_{\alpha,\beta}$. However, noting that 

\begin{align*}
&\left\langle 
\left( I - P_{(\mathcal{M}_{\alpha,\beta})_k} M_F M_F^* \big|_{(\mathcal{M}_{\alpha,\beta})_k} \right)
\left( \sum_{j=1}^n X_j\, k^{\alpha,\beta}(\cdot, z_j) \right),
\sum_{i=1}^n X_i\, k^{\alpha,\beta}(\cdot, z_i)
\right\rangle \\[1ex]
&\quad= 
\left\langle 
\sum_{j=1}^n X_j\, k^{\alpha,\beta}(\cdot, z_j),
\sum_{i=1}^n X_i\, k^{\alpha,\beta}(\cdot, z_i)
\right\rangle 
-
\left\langle 
M_F^* \left( \sum_{j=1}^n X_j\, k^{\alpha,\beta}(\cdot, z_j) \right),
M_F^* \left( \sum_{i=1}^n X_i\, k^{\alpha,\beta}(\cdot, z_i) \right)
\right\rangle \\[1ex]
&\quad= 
\sum_{i,j=1}^n 
\left\langle 
X_j\, k^{\alpha,\beta}(\cdot, z_j),
X_i\, k^{\alpha,\beta}(\cdot, z_i)
\right\rangle 
- 
\sum_{i,j=1}^n 
\left\langle 
F(z_j)^* X_j\, k^{\alpha,\beta}(\cdot, z_j),
F(z_i)^* X_i\, k^{\alpha,\beta}(\cdot, z_i)
\right\rangle \\[1ex]
&\quad= 
\sum_{i,j=1}^n 
\operatorname{tr} \left(
X_i^* X_j\, k^{\alpha,\beta}(z_i, z_j)
- X_i^* F(z_i) F(z_j)^* X_j\, k^{\alpha,\beta}(z_i, z_j)
\right) \\[1ex]
&\quad= 
\operatorname{tr} \left( 
\sum_{i,j=1}^n 
\left[
X_j\, k^{\alpha,\beta}(z_i, z_j) X_i^*
- F(z_j)^* X_j\, k^{\alpha,\beta}(z_i, z_j) X_i^* F(z_i)
\right]
\right)
\end{align*}
finishes the proof. 
    \end{proof}

        We are ready to prove Theorem \ref{thm.trace.condition}.
    \begin{proof}[Proof of Theorem {\ref{thm.trace.condition}}]
    Assume $F$ satisfies $\|F\|_{\infty}\leq 1$ and $F(z_i)=W_i$ for all $1\leq i\leq n$. Since 
    $$1\geq  \|F\|_{\infty}= \|M_F\|_{op}\geq\|P_{(\mathcal{M}_{\alpha,\beta})_k}M_F|_{(\mathcal{M}_{\alpha,\beta})_k}\|_{op},$$
    the necessity part follows readily from Lemma \ref{lem.necessity}.
    In the other direction,
            let $z_1,\ldots,z_n$ be the interpolation nodes and $W_1,\ldots, W_n\in M_k(\C)$ be the interpolation data. Using Lagrange interpolation entry-wise, we obtain $Q\in M_k(\node)$ such that $Q(z_i)=W_i$.
            Since, $(\mathcal{M}_{\alpha,\beta})_k$ is $M_Q^*$ invariant,
            $$\|P_{(\mathcal{M}_{\alpha,\beta})_k}M_Q|_{(\mathcal{M}_{\alpha,\beta})_k}\|=\|M_Q^*|_{(\mathcal{M}_{\alpha,\beta})_k}\|\leq 1,$$
            where the last inequality is true by our assumption \eqref{eq.trace.condition} and Lemma \ref{lem.necessity}.
            \\\\
            Define a linear functional on $(\mathcal{I})_{\perp}$ by
            $$L_{Q}(h)=\frac{1}{2\pi}\int_{\T}\operatorname{tr}[Qh].$$ We show that $\|L_Q\|\leq 1$. By Theorem \ref{thm:factorization}, for $h \in S$, we can convert the formula of $L_Q$ to an $L^2$-inner product as follows,
            $$L_{Q}(h)=\frac{1}{2\pi}\int_{\T}\operatorname{tr}[Qh]=\frac{1}{2\pi}\int_{\T}\operatorname{tr}[Qgf^*]=\ip{Qg}{f}_{M_k(L^2)}=\ip{P_{H^2_{\alpha,\beta}}Qg}{P_{(H^2_{\alpha,\beta}\cap B_FM_k(H^2))^{\perp}}f}_{M_k(L^2)}.$$
            
            Since
            $(\mathcal{M}_{\alpha,\beta})_k=(H^2_{\alpha,\beta})_k\ominus((H^2_{\alpha,\beta})_k\cap B_FM_k(H^2))$, we can rewrite the equality above as
            
            \begin{align*}
    L_Q(h) &= \ip{P_{(\mathcal{M}_{\alpha,\beta})_k}Qg}{P_{(\mathcal{M}_{\alpha,\beta})_k}f} 
    = \\
    &= \ip{P_{(\mathcal{M}_{\alpha,\beta})_k}M_Q (P_{(\mathcal{M}_{\alpha,\beta})_k}g + P_{H^2_{\alpha,\beta}\cap B_FM_k(H^2)} g)}{P_{(\mathcal{M}_{\alpha,\beta})_k}f} = \\
    &= \ip{P_{(\mathcal{M}_{\alpha,\beta})_k}M_Q P_{(\mathcal{M}_{\alpha,\beta})_k}g}{P_{(\mathcal{M}_{\alpha,\beta})_k}f}.
\end{align*}
The last equality follows from the fact that $H^2_{\alpha,\beta}\cap B_FM_k(H^2)$ is multiplier invariant.
            By density of $S$,
            $$\|L_Q\|=\sup_{h\in S,\,\|h\|=1}|L_Q(h)|\leq\sup_{(f,g)\in A}|\ip{P_{(\mathcal{M}_{\alpha,\beta})_k}M_QP_{(\mathcal{M}_{\alpha,\beta})_k}g}{P_{(\mathcal{M}_{\alpha,\beta})_k}f}|,$$
            where
            $A=\{(f,g) \mid \|f\|=1,\,\|g\|=1,g\in (H^2_{\alpha,\beta})_k,\,f\in ((H^2_{\alpha,\beta})_k\cap B_FM_k(H^2))^{\perp}\}$.
            Lastly, by the Cauchy-Schwarz inequality,
            $$\sup_{(f,g)\in A}|\ip{P_{(\mathcal{M}_{\alpha,\beta})_k}M_QP_{(\mathcal{M}_{\alpha,\beta})_k}g}{P_{(\mathcal{M}_{\alpha,\beta})_k}f}| \leq \|P_{(\mathcal{M}_{\alpha,\beta})_k}M_Q|_{(\mathcal{M}_{\alpha,\beta})_k}\|\leq 1.$$ By the Hahn-Banach Theorem, we can extend $L_Q$ to a contractive functional on $M_k(L^1)$. Now the Riesz representation theorem implies that such a functional has the form of $L_F$ for some $F\in M_k(L^{\infty})$, where
            $$L_F(h)=\frac{1}{2\pi}\int_{\T}\operatorname{tr}[F(z)h(z)]|dz|$$ and $1\geq \|L_F\|=\|F\|_{\infty}$. As $L_F$ is an extension of $L_Q$, $L_{F-Q}|_{(\mathcal{I})_{\perp}}=L_F|_{(\mathcal{I})_{\perp}}-L_Q=0$, that is, $F-Q\in ((\mathcal{I})_{\perp})^{\perp}=\mathcal{I}$. As $Q\in M_k(\node)$ and $\mathcal{I}\subset M_k(\node)$, $F\in M_k(\node)$ and $F(z_i)=Q(z_i)=W_i$. We noted that $\|F\|_{\infty}\leq 1$, implying $F$ is the desired solution.
        \end{proof}
        \begin{cor}(Scalar case)
            Given $\{z_1,\ldots,z_n\}\subset\bb{D},\,\{w_1,\ldots,w_n\}\subset\C$, there exists $f\in \node$ such that $f(z_i)=w_i$ if and only if the Pick matrix
            $$\left[(1-w_i\overline{w_j})k^{\alpha,\beta}(z_i,z_j)\right]_{i,j=1}^n$$ is positive semi-definite for all $\alpha,\beta\in\G(1\times 1)$.
        \end{cor}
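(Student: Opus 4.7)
My plan is to deduce this corollary as a direct specialization of Theorem \ref{thm.trace.condition} to the case $k=1$ (with the implicit normalization $\|f\|_\infty \leq 1$, without which the statement is trivial). The one-line idea is that when all matrices are scalars, the trace becomes the identity, and the trace inequality in \eqref{eq.trace.condition} collapses to the standard quadratic form encoding positive semi-definiteness of a Pick-type matrix.

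Concretely, I would set $k=1$ in Theorem \ref{thm.trace.condition}, so that each $W_i$ becomes a scalar $w_i$, each test matrix $X_i$ becomes a scalar $x_i$, and $\G(1 \times 1)$ reduces to the unit sphere $\{(\alpha, \beta) \in \C^2 : |\alpha|^2 + |\beta|^2 = 1\}$. Using commutativity of scalars and the triviality of the trace, the summand in \eqref{eq.trace.condition} simplifies to
\[
x_j \bar{x_i}\, k^{\alpha,\beta}(z_i, z_j) - w_i \bar{w_j}\, x_j \bar{x_i}\, k^{\alpha,\beta}(z_i, z_j) = \bar{x_i} x_j (1 - w_i \bar{w_j})\, k^{\alpha,\beta}(z_i, z_j).
\]
Summing over $i, j$ and reading this as a quadratic form in $(x_1, \ldots, x_n) \in \C^n$ identifies the trace condition of Theorem \ref{thm.trace.condition} with the positive semi-definiteness of the matrix $\bigl[(1 - w_i \bar{w_j})\, k^{\alpha,\beta}(z_i, z_j)\bigr]_{i,j=1}^n$ for every $(\alpha, \beta) \in \G(1 \times 1)$.

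Since the whole argument is a transparent algebraic specialization, no genuine obstacle arises; I would not expect anything resembling a hard step. The only mild subtlety worth remarking on is that the Pick matrix is Hermitian, which follows from the identity $k^{\alpha,\beta}(z_j, z_i) = \overline{k^{\alpha,\beta}(z_i, z_j)}$ (immediate from the explicit formula in Lemma \ref{prop.reprod.kernel}). This ensures that positive semi-definiteness is well-defined in the usual sense and dispels any ambiguity coming from the order of the indices $i, j$ in the trace expression.
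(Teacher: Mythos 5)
Your proposal is correct and is essentially the paper's own proof: both specialize Theorem \ref{thm.trace.condition} to $k=1$, where the trace condition \eqref{eq.trace.condition} collapses to the quadratic form $\sum_{i,j}(1-w_i\overline{w_j})k^{\alpha,\beta}(z_i,z_j)x_j\overline{x_i}\geq 0$, which is precisely positive semi-definiteness of the displayed Pick matrix for every $(\alpha,\beta)\in\G(1\times 1)$. Your added remarks on the implicit normalization $\|f\|_\infty\leq 1$ and the Hermitian symmetry of $k^{\alpha,\beta}$ are harmless clarifications and do not change the argument.
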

            \begin{proof}
                By what we proved, such a function exists if and only if for every $(\alpha,\beta)\in\G(1\times 1)$ and for every choice of scalars $(x_1,\ldots,x_n)\in\C^n$,
                $$\sum_{i,j=1}^n (1-w_i\overline{w_j})k^{\alpha,\beta}(z_i,z_j)x_i\overline{x_j}\geq 0.$$ This condition is exactly the condition
                $$\ip{\left[(1-w_i\overline{w_j})k^{\alpha,\beta}(z_i,z_j)\right]_{i=1}^n\begin{pmatrix}
                x_1\\\vdots\\x_n
            \end{pmatrix}}{\begin{pmatrix}
            x_1\\\vdots\\x_n
        \end{pmatrix}}
        \geq 0$$ for every $\begin{pmatrix}
        x_1\\\vdots\\x_n
    \end{pmatrix}\in\C^n$, which is exactly the condition that
    $$\left[(1-w_i\overline{w_j})k^{\alpha,\beta}(z_i,z_j)\right]_{i=1}^n\geq 0.$$
    \end{proof} 
We conclude this section with two important corollaries that connect the norm in $\node/I$ and the norm of compression operators.

\begin{cor}(Distance formula)
    Let $\mathcal{I}$ be as before. Then, for every $F\in M_k(\node)$,
    $$dist(F,\mathcal{I})=\|F+\mathcal{I}\|=\sup_{{(\alpha,\beta)}\in\G(k\times k)}\|M_F^*|_{\mathcal{M_{\alpha,\beta}}}\|.$$
    \label{col.distance_formula}
\end{cor}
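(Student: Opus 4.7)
The plan is to derive the distance formula as a direct corollary of Theorem \ref{thm.trace.condition} and Lemma \ref{lem.necessity}, by rescaling and exploiting the $M_F^*$-invariance of $\mathcal{M}_{\alpha,\beta}$.

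The first ingredient is that $(\mathcal{M}_{\alpha,\beta})_k$ is $M_F^*$-invariant for every $F \in M_k(\node)$, which is immediate from the eigenvector formula $M_F^* X k^{\alpha,\beta}(\cdot,w) = F(w)^* X k^{\alpha,\beta}(\cdot,w)$ in Proposition \ref{prop.kernel.function}. Consequently $M_F^*|_{(\mathcal{M}_{\alpha,\beta})_k} = \bigl(P_{(\mathcal{M}_{\alpha,\beta})_k} M_F|_{(\mathcal{M}_{\alpha,\beta})_k}\bigr)^*$, so the two norms agree. Moreover, for any $G \in \mathcal{I}$ we have $G(z_j) = 0$, so $M_G^*$ annihilates every generator $X k^{\alpha,\beta}(\cdot,z_j)$ of $(\mathcal{M}_{\alpha,\beta})_k$. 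Hence $M_F^*|_{(\mathcal{M}_{\alpha,\beta})_k} = M_{F+G}^*|_{(\mathcal{M}_{\alpha,\beta})_k}$, and
\[
\|M_F^*|_{(\mathcal{M}_{\alpha,\beta})_k}\| = \|M_{F+G}^*|_{(\mathcal{M}_{\alpha,\beta})_k}\| \leq \|M_{F+G}\|_{\text{op}} = \|F+G\|_\infty.
\]
Taking the supremum over $(\alpha,\beta)\in\mathbb{G}(k\times k)$ and then the infimum over $G\in\mathcal{I}$ yields the inequality $\sup_{(\alpha,\beta)}\|M_F^*|_{(\mathcal{M}_{\alpha,\beta})_k}\| \leq \operatorname{dist}(F,\mathcal{I})$.

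For the reverse inequality, set $c = \sup_{(\alpha,\beta)}\|M_F^*|_{(\mathcal{M}_{\alpha,\beta})_k}\|$. If $c=0$, then $F(z_j)=0$ for all $j$ by the eigenvector formula, so $F\in\mathcal{I}$ and both sides are zero. If $c>0$, consider the interpolation data $W_i := F(z_i)/c$. By the invariance observation together with Lemma \ref{lem.necessity} applied to the candidate $F/c$, the trace condition \eqref{eq.trace.condition} holds at every $(\alpha,\beta)$ for the data $(z_i,W_i)$, since
\[
\bigl\|P_{(\mathcal{M}_{\alpha,\beta})_k} M_{F/c}|_{(\mathcal{M}_{\alpha,\beta})_k}\bigr\| = \tfrac{1}{c}\bigl\|M_F^*|_{(\mathcal{M}_{\alpha,\beta})_k}\bigr\| \leq 1.
\]
Theorem \ref{thm.trace.condition} then produces $H \in M_k(\node)$ with $\|H\|_\infty \leq 1$ and $H(z_i) = W_i$. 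Setting $G = F - cH$, we have $G \in \mathcal{I}$ and $\operatorname{dist}(F,\mathcal{I}) \leq \|F - G\|_\infty = \|cH\|_\infty \leq c$, which is the desired bound.

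The argument is largely bookkeeping: the hard analytic work is already in Theorem \ref{thm:factorization} and Theorem \ref{thm.trace.condition}. The only delicate point to double-check is the simultaneous use of scaling (which requires $c>0$) and the fact that the supremum is actually attained as a genuine upper bound for the full multiplier norm on the quotient; this is ensured by applying Theorem \ref{thm.trace.condition} in its matricial form at the fixed level $k$, rather than invoking a completely isometric statement.
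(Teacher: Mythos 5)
Your proof is correct and follows essentially the same route as the paper: both rest on rescaling Theorem \ref{thm.trace.condition} and the equivalence of the trace condition with the compression norm from Lemma \ref{lem.necessity} (your use of $M_F^*$-invariance of $\mathcal{M}_{\alpha,\beta}$ for the easy inequality is the same computation packaged slightly differently). If anything, your version is a bit more careful than the paper's, since you handle the degenerate case $c=0$ and make the rescaling step explicit.
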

\begin{proof}
    By the definition of the quotient norm,
    $$\|F+\mathcal{I}\|=\inf\{\|G\| \mid G\in M_k(\node),\,F(z_i)=G(z_i)=W_i,\,\forall i=1,\ldots,n\}.$$ By rescalling, it follows that there exists a $G\in M_k(\node)$, such that $G(z_i)=W_i$ and $\|G\|\leq M$ if and only if
    \begin{equation}
        \sum_{i,j=1}^n \operatorname{tr}\left[M^2X_jk^{\alpha,\beta}(z_i,z_j)X_i^*-W_j^*X_jk^{\alpha,\beta}(z_i,z_j)X_i^*W_i\right]\geq 0,\label{eq:100} \end{equation}
        for all $n$-tuples $(X_1,\ldots,X_n)\in (M_k(\C))^n$ and $(\alpha,\beta)\in\G(k\times k)$. Therefore,
        $$dist(F,\mathcal{I})=\inf\{M \mid \eqref{eq:100}\,\text{is satisfied}\,\forall (\alpha,\beta)\in\G(k\times k),\,(X_1,\ldots,X_n)\in (M_k(\C))^n\}=$$$$=\inf\{M \mid \|M_F^*|_{\mathcal{M_{\alpha,\beta}}}\|\leq M,\,\forall (\alpha,\beta)\in\G(k\times k)\}.$$
        Clearly,
        $$\sup_{(\alpha,\beta)\in\G(k\times k)}\|M_F^*|_{\mathcal{M_{\alpha,\beta}}}\|=\inf\{M \mid \|M_F^*|_{\mathcal{M_{\alpha,\beta}}}\|\leq M,\,\forall (\alpha,\beta)\in\G(k\times k)\}.$$
    \end{proof}
    For fixed $m, r\in\N$ and $(\alpha,\beta)\in\G(m\times m)$, define a map
    $\varphi_{\alpha,\beta} \colon M_r(\node)\to B((\mathcal{M}_{\alpha,\beta})_{r,m})$
    by $$\varphi_{\alpha,\beta}(F)=P_{(\mathcal{M}_{\alpha,\beta})_{r,m}}M_F|_{(\mathcal{M}_{\alpha,\beta})_{r,m}}.$$
    By Sarason's Lemma \cite[Lemma 0]{sarason1965spectral}, this map is an algebra homomorphism, and it is straightforward that $\ker(\varphi)=\mathcal{I}$.
    \begin{cor}
    (Generalized distance formula)
    Let $r\in\N$ and $F\in M_r\left(\node\right)$. Denote $M_r(I)=\mathcal{I}$. Then,
    $$\left\|F+\mathcal{I}\right\|= \sup_m \sup\left\{\left\|P_{(\mathcal{M_{\alpha,\beta}})_{r,m}} M_F|_{(\mathcal{M_{\alpha,\beta}})_{r,m}}\right\| \mid (\alpha,\beta) \in \mathbb{G}(m\times m)\right\}.$$
\end{cor}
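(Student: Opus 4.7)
The plan is to obtain the generalized distance formula as a quick consequence of Corollary \ref{col.distance_formula} together with a standard descent-to-quotient argument. The first observation is that the previous corollary is already the $m = r$ case of the generalized formula (since $(\mathcal{M}_{\alpha,\beta})_{r,r} = (\mathcal{M}_{\alpha,\beta})_r$), so it immediately yields
\[
\|F + \mathcal{I}\| = \sup_{(\alpha,\beta) \in \G(r \times r)} \|\varphi_{\alpha,\beta}(F)\| \leq \sup_m \sup_{(\alpha,\beta) \in \G(m \times m)} \|\varphi_{\alpha,\beta}(F)\|.
\]

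For the reverse inequality, the plan is to show that for each $m$ and each $(\alpha,\beta) \in \G(m \times m)$, the map $\varphi_{\alpha,\beta}$ is a contractive algebra homomorphism from $M_r(\node)$ with $\mathcal{I}$ contained in its kernel, and therefore descends to a contractive map on the quotient $M_r(\node)/\mathcal{I}$, which forces $\|\varphi_{\alpha,\beta}(F)\| \leq \|F + \mathcal{I}\|$. Contractivity is automatic, since $\varphi_{\alpha,\beta}(F)$ is a compression of $M_F$. That $\mathcal{I}$ sits in the kernel will follow from the eigenvalue identity in Proposition \ref{prop.kernel.function}: for $G \in \mathcal{I}$, $M_G^* X k^{\alpha,\beta}(\cdot, z_i) = G(z_i)^* X k^{\alpha,\beta}(\cdot, z_i) = 0$, so $\varphi_{\alpha,\beta}(G) = 0$. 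Multiplicativity will follow from Sarason's Lemma, provided I verify that $(\mathcal{M}_{\alpha,\beta})_{r,m}$ is semi-invariant for the family $\{M_F : F \in M_r(\node)\}$.

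The only step requiring a bit of care --- and the unique point where the constraint $F(0) = F(\lambda)$ enters --- is showing that $(H^2_{\alpha,\beta})_{r,m,m}$ is invariant under $M_F$ for $F \in M_r(\node)$. I would argue that if $g \in (H^2_{\alpha,\beta})_{r,m,m}$ has representing matrix $M$ (so that $g(0) = M\alpha$ and $g(\lambda) = M(\alpha + f_\lambda(\lambda)\beta)$), then $(Fg)(0) = F(0)M\alpha$ and $(Fg)(\lambda) = F(\lambda)M(\alpha + f_\lambda(\lambda)\beta) = F(0)M(\alpha + f_\lambda(\lambda)\beta)$, placing $Fg$ in the same space with representing matrix $F(0)M$. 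Invariance of the intersection with $B_F M_{r \times m}(H^2)$ is immediate, since $B_F$ is scalar and therefore commutes with $M_F$. Semi-invariance then yields multiplicativity via Sarason, the descent argument closes the proof, and no significant obstacle is anticipated; this is essentially a bookkeeping corollary to Corollary \ref{col.distance_formula}.
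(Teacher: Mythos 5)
Your proposal is correct and follows essentially the same route as the paper: the inequality $\leq$ comes from Corollary \ref{col.distance_formula} as the $m=r$ case, and the reverse comes from observing that each compression $\varphi_{\alpha,\beta}$ is a contractive homomorphism annihilating $\mathcal{I}$, hence bounded by the quotient norm. The only difference is that you spell out the semi-invariance of $(\mathcal{M}_{\alpha,\beta})_{r,m}$ and the kernel containment via the eigenvalue identity, details the paper disposes of in the remarks preceding the corollary by citing Sarason's Lemma.
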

\begin{proof}
    The fact that the RHS is greater than or equal to the LHS is clear by Corollary \ref{col.distance_formula}.
    Fix $m\in\N$ and $(\alpha,\beta)\in\G(m\times m)$, and consider $\varphi$ as before. Clearly, $\varphi_{\alpha,\beta}$ is a contraction, as
    $$\left\|\varphi_{\alpha,\beta}(F)\right\|=\left\|P_{(\mathcal{M_{\alpha,\beta}})_{r,m}}M_F|_{(\mathcal{M_{\alpha,\beta}})_{r,m}}\right\|\leq \left\|P_{(\mathcal{M_{\alpha,\beta}})_{r,m}}\right\|\left\|M_F|_{H^2_{\alpha,\beta}}\right\|\leq \left\|M_F\right\|=\left\|F\right\|_{\infty},$$ 
    where we used the fact that as a multiplier on $H^2_{\alpha,\beta}$, $M_F|_{H^2_{\alpha,\beta}}$ is the compression of $M_F\in \operatorname{Mult}(M_k(H^2))$ to the invariant subspace $H^2_{\alpha,\beta}$, and that
    $\|M_F\|_{op}=\|F\|_{\infty}$ for every $F\in M_{k}(H^{\infty})$.
    Lastly, note that for every $F\in M_r(\node),\,G\in\mathcal{I}$,
    $$\|\varphi(F)\|=\|\varphi(F+G)\|\leq \|F+G\|.$$ Hence, 
    $$\|P_{(\mathcal{M_{\alpha,\beta}})_{r,m}}M_F|_{(\mathcal{M_{\alpha,\beta}})_{r,m}}\|\leq\inf_{G\in\mathcal{I}}\|F+G\|=\|F+I\|.$$ As this is independent of $m$ and $(\alpha,\beta)$,
    $$\sup_m \sup \left\{\left\|P_{(\mathcal{M_{\alpha,\beta}})_{r,m}} M_F|_{(\mathcal{M_{\alpha,\beta}})_{r,m}}\right\| \mid (\alpha,\beta) \in \mathbb{G}(m\times m)\right\}\leq\|F+I\|,$$
    as desired.
\end{proof}

\section{Boundary representations from scalar interpolation}\label{sec:boundary}
We begin this section with some results on representations of $\node/I$ for general $0\neq\lambda\in\D$ and $\{z_1,\ldots,z_n\}\subset\D\setminus\{0,\lambda\}$. Here $I$ is again the ideal of functions that vanish on the $z_i$. By \cite[Proposition 2.3.4]{BlecherLeMerdy}, $\node/I$ is an operator algebra when equipped with the quotient operator space structure. By a representation of an operator algebra on a Hilbert space $\cH$, we mean a unital completely contractive homorphism to $B(\cH)$. The spectral structure of $\node/I$ is quite simple and is described in the following proposition.

\begin{lem} \label{lem:diagonalizable} 
            The algebra $\node/I$ is generated by $h+I$, with $h\in \node$ being the degree $n+1$ polynomial satisfying $h(z_i)=z_i$ for all $i=1,\ldots,n$ and $h(0)=h(\lambda)=0$. The spectrum of $\node/I$ consists of the characters of evaluation at $z_1,\ldots,z_n$. Therefore, for every finite-dimensional representation $\varphi \colon \node/I \to B(\cH)$ and every $f \in \node$, $\varphi(f + I) = f(\varphi(h + I))$ is diagonalizable.
\end{lem}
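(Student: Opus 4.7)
The plan is to first identify $\node/I$ with $\C^n$ as a commutative algebra via evaluation at the nodes. Because $0, \lambda, z_1, \ldots, z_n$ are pairwise distinct, Lagrange interpolation produces for each $i$ a polynomial $P_i$ (of degree $n+1$) that vanishes on $\{0, \lambda\} \cup \{z_j : j \neq i\}$ and satisfies $P_i(z_i) = 1$; in particular $P_i \in \node$, and the classes $\{P_i + I\}_{i=1}^n$ are linearly independent in $\node/I$. Conversely, $I$ is the kernel of the evaluation map $\node \to \C^n$, $f \mapsto (f(z_1), \ldots, f(z_n))$, so $\dim(\node/I) \leq n$ and equality holds. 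The $n$ characters of $\node/I \cong \C^n$ are then the coordinate projections, which pull back to the evaluations at $z_1, \ldots, z_n$, yielding the spectrum description.

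Next I would show that $h + I$ generates $\node/I$. Under the identification above, $(h+I)^k$ corresponds to the tuple $(z_1^k, \ldots, z_n^k)$. Since the $z_i$ are pairwise distinct, the Vandermonde matrix $[z_i^k]_{1 \leq i \leq n,\, 0 \leq k \leq n-1}$ is invertible; hence $\{1, h+I, \ldots, (h+I)^{n-1}\}$ is a basis of the $n$-dimensional algebra $\node/I$, proving the first claim.

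For the final assertion, fix a finite-dimensional representation $\varphi \colon \node/I \to B(\mathcal{H})$ and set $T := \varphi(h+I)$. The polynomial $q(x) := \prod_{i=1}^n (x - z_i)$ satisfies $q(h) \in \node$ (since $\node$ is an algebra containing $h$ and the constants) and vanishes at each $z_j$, so $q(h) \in I$. Applying $\varphi$ gives $q(T) = 0$. The roots of $q$ are simple, so the minimal polynomial of $T$ has simple roots, and $T$ is diagonalizable with spectrum contained in $\{z_1, \ldots, z_n\} \subset \D$. Given an arbitrary $f \in \node$, the generation statement provides a polynomial $p$ with $p(z_i) = f(z_i)$ for all $i$, so $f + I = p(h+I)$ and $\varphi(f+I) = p(T)$. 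Writing $T = \sum_i z_i P_i$ in its spectral decomposition, we get $p(T) = \sum_i p(z_i) P_i = \sum_i f(z_i) P_i$, which is exactly $f(T)$ via the holomorphic functional calculus and is manifestly diagonalizable. The argument is essentially linear-algebraic; the only point requiring mild care is ensuring the Lagrange interpolants lie in $\node$, which is handled by the additional vanishing conditions at $0$ and $\lambda$ together with the hypothesis $\{z_1,\ldots,z_n\} \cap \{0,\lambda\} = \emptyset$.
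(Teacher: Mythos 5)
Your proof is correct, and while it shares the paper's basic mechanism (Lagrange interpolation to handle the vanishing constraints at $0$ and $\lambda$, the identity $h(z_i)=z_i$ to reduce everything to polynomials in $h+I$, and an annihilating polynomial with simple roots to force diagonalizability), it diverges from the paper in two places worth noting. For the spectrum, you identify $\node/I\cong\C^n$ outright via the evaluation map and a Vandermonde argument, whereas the paper argues that any character is determined by its value on $h$ and that $h-\alpha+I$ is invertible for $\alpha\notin\{z_1,\ldots,z_n\}$; these are essentially equivalent, and your version has the small bonus of exhibiting the dimension and the algebra structure explicitly. The more substantive difference is in the final assertion: the paper first proves $\varphi(f+I)=f(\varphi(h+I))$ for all $f\in\node$ by a weak$^*$ argument (weak$^*$ closedness of $I$, weak$^*$ continuity of the quotient map, weak$^*$ density of polynomials) and only afterwards deduces diagonalizability from the annihilating polynomial $\prod_i(z-z_i)\,z(z-\lambda)\in\node$. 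You reverse the order: you get $q(T)=0$ directly from $q(h)\in I$ with $q(x)=\prod_i(x-z_i)$ (noting $q(h)\in\node$ automatically, so no need to build the annihilator inside $\node$ as a function of $z$), conclude diagonalizability, and then obtain $\varphi(f+I)=p(T)=\sum_i f(z_i)P_i=f(T)$ by evaluating on the spectral projections, since your interpolating $p$ agrees with $f$ on the spectrum of $T$. This bypasses the weak$^*$ topology entirely and keeps the argument purely finite-dimensional; the paper's density argument, on the other hand, yields the functional-calculus identity for every $f\in\node$ without first invoking diagonalizability and is the kind of argument that would survive in settings where one cannot reduce to a spectral decomposition. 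Both are complete proofs of the lemma as stated.
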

\begin{proof}
    Let $f\in \node$. There exists a polynomial $p(z)=\sum_{k=0}^{n+1}a_kz^k\in \node$ satisfying 
    $p(z_i)=f(z_i)$ for all $1\leq i\leq n$. Define $\tilde{f}(z)=\sum_{k=0}^{n+1}a_kh^k\in \node$. Then,
    $$\tilde{f}(z_i)=\sum_{k=0}^{n+1}a_kh^k(z_i)=\sum_{k=0}^{n+1}a_kz_i^k=p(z_i)=f(z_i).$$
    So $f+I=\tilde{f}+I=\sum_{k=0}^{n+1}a_kh^k+I=\sum_{k=0}^{n+1}a_k(h+I)^k$. We conclude that every character of $\node/I$ is determined on $h$. It is also easy to see that for every $\alpha \in \C \setminus \{z_1,\ldots,z_n\}$, $h - \alpha + I$ is invertible in $\node/I$. The claim on the spectrum now follows.

    For the claim that for every $f \in \node$, $\varphi(f + I) = f(\varphi(h + I))$, we use a density argument. Firstly, since $I$ is weak$^*$ closed, the quotient map
            $$\pi \colon \node\to\node/I$$ 
            is weak$^*$ continuous. Therefore, $\varphi\circ\pi$ is weak$^*$ continuous. Since polynomials are weak$^*$ dense, it is enough to prove it for polynomials. However, this follows from the fact that $\varphi$ is an algebra homomorphism.
            Now consider $p(z)=\prod_{i=1}^n (z-z_i)\cdot z(z-\lambda)\in H^{\infty}_{\text{node}}$. Then, $p+I=0+I$. Since $\varphi$ is a representation, $\varphi(p+I)=0$. On the other hand, $p+I=\prod_{i=1}^n(h-z_i)\cdot h(h-\lambda)+I$, and since $\varphi$ is a representation,
            $$\varphi(p+I)=\prod_{i=1}^n\left(\varphi(h+I)-z_iI_d\right)\varphi(h+I)\left(\varphi(h+I)-\lambda I_d\right)=0.$$
            Therefore, the minimal polynomial of $\varphi(h+I)$ factors into different linear factors, i.e., $\varphi(h+I)$ is diagonalizable. The general claim follows.
        \end{proof}

        \begin{lem}
    For $(\alpha,\beta)\in \G(1\times 1)$, the map
    $$\rho_{\alpha,\beta} \colon \node/I\to B(\mathcal{M_{\alpha,\beta}})$$ given by $f+I\to P_{\mathcal{M_{\alpha,\beta}}}M_f|_{\mathcal{M_{\alpha,\beta}}}$ is an operator algebra representation.
\end{lem}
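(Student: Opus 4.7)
The plan is to verify the three defining properties of an operator algebra representation in turn: well-definedness on the quotient $\node/I$, the unital algebra homomorphism property, and complete contractivity.

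\textbf{Well-definedness.} First I would show that the compression map $f \mapsto P_{\mathcal{M}_{\alpha,\beta}} M_f|_{\mathcal{M}_{\alpha,\beta}}$ annihilates the ideal $I$. Since $\mathcal{M}_{\alpha,\beta} = \operatorname{span}\{k^{\alpha,\beta}_{z_1}, \ldots, k^{\alpha,\beta}_{z_n}\}$ and Lemma \ref{lem.eigenvalues.of.multiplier} gives $M_f^* k^{\alpha,\beta}_{z_i} = \overline{f(z_i)} k^{\alpha,\beta}_{z_i}$, every $f \in I$ has $M_f^*|_{\mathcal{M}_{\alpha,\beta}} = 0$. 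Taking adjoints, the compression vanishes, and $\rho_{\alpha,\beta}$ therefore factors well-definedly through $\node/I$.

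\textbf{Multiplicativity and unitality.} Unitality is immediate since $M_1 = \operatorname{id}$. For multiplicativity, the crucial observation is that the decomposition $\mathcal{M}_{\alpha,\beta} = H^2_{\alpha,\beta} \ominus (H^2_{\alpha,\beta} \cap B_F H^2)$ exhibits $\mathcal{M}_{\alpha,\beta}$ as a semi-invariant subspace for the $\node$-action: $H^2_{\alpha,\beta}$ is $\node$-invariant by Lemma \ref{lem.eigenvalues.of.multiplier}, and $B_F H^2$ is $H^\infty$-invariant, so the intersection $H^2_{\alpha,\beta} \cap B_F H^2$ is also $\node$-invariant. Sarason's Lemma then yields $P_{\mathcal{M}_{\alpha,\beta}} M_{fg}|_{\mathcal{M}_{\alpha,\beta}} = \bigl(P_{\mathcal{M}_{\alpha,\beta}} M_f|_{\mathcal{M}_{\alpha,\beta}}\bigr)\bigl(P_{\mathcal{M}_{\alpha,\beta}} M_g|_{\mathcal{M}_{\alpha,\beta}}\bigr)$ for all $f,g \in \node$.

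\textbf{Complete contractivity.} This I would derive directly from the generalized distance formula already established. After the natural identification of $\mathcal{M}_{\alpha,\beta} \otimes \C^r$ with $(\mathcal{M}_{\alpha,\beta})_{r,1,1}$, the $r$-th amplification $\rho^{(r)}_{\alpha,\beta}$ sends $F + M_r(I)$ to $P_{(\mathcal{M}_{\alpha,\beta})_{r,1,1}} M_F|_{(\mathcal{M}_{\alpha,\beta})_{r,1,1}}$. Restricting the supremum in the generalized distance formula to the single pair consisting of $m = 1$ and the chosen scalar $(\alpha,\beta) \in \G(1 \times 1)$ gives $\|\rho^{(r)}_{\alpha,\beta}(F + M_r(I))\| \leq \|F + M_r(I)\|$ for every $r$, which is complete contractivity.

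I do not expect a serious obstacle: the substantial work has already been carried out in proving the generalized distance formula and in establishing the semi-invariant structure of $\mathcal{M}_{\alpha,\beta}$. The only minor care needed is in matching $\rho^{(r)}_{\alpha,\beta}$ with the indexed subspace $(\mathcal{M}_{\alpha,\beta})_{r,1,1}$ so that the distance formula applies with its standard indexing convention.
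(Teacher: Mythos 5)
Your proposal is correct and takes essentially the same route as the paper: multiplicativity comes from Sarason's Lemma applied to the semi-invariant subspace $\mathcal{M}_{\alpha,\beta}=H^2_{\alpha,\beta}\ominus(H^2_{\alpha,\beta}\cap B_F H^2)$, well-definedness from the eigenvalue relation $M_f^*k^{\alpha,\beta}_{z_i}=\overline{f(z_i)}k^{\alpha,\beta}_{z_i}$, and complete contractivity from the generalized distance formula (where, as in the paper's corollary, only the easy inequality that each compression is dominated by $\|F+\mathcal{I}\|$ is needed, not the full equality).
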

\begin{proof}
    The claim follows at once from the Sarason Lemma \cite[Lemma 0]{sarason1965spectral}.
\end{proof}
\begin{remark}

            For every $(\alpha,\beta)$ on the unit sphere of $\C^2$, let $T = \rho_{\alpha,\beta}(h+I)$. Then, the minimal polynomial of $T$ is precisely, $p(z)=\prod_{i=1}^n(z-z_i)$. This is since $T^*$ is an operator on $n$ dimensional space with eigenvalues $\overline{z_1},\ldots,\overline{z_n}$.
            \label{remark.minimal.polynomial}
        \end{remark}

Now we need to recall some notations and terminology from the theory of $C^*$-covers and boundary representations of operator algebras. Recall, that a $C^*$-cover of an operator algebra $\cA$ is a $C^*$-algebra $B$ together with a completely isometric homomorphisms $\iota \colon \cA \to B$, such that $B = C^*(\iota(\cA))$. For every operator algebra $\cA$, there are two extremal $C^*$-covers. Let $C^*_{\max}(\cA)$ denote its maximal $C^*$-cover. The maximal $C^*$-cover for an operator system was constructed by Kirchberg and Wassermann \cite{KirchWass}. The maximal $C^*$-cover of an operator algebra is described in \cite[Proposition 2.4.2]{BlecherLeMerdy}. The maximality of this $C^*$-cover is encoded in its universal property. For every $C^*$-algebra $B$ and a completely contractive homomorphism $\varphi \colon \cA \to B$, there exists a $*$-homomorphism $\pi \colon C^*_{max}(\cA) \to B$ extending $\varphi$. The minimal $C^*$-cover of $\cA$ is the $C^*$-envelope that we will denote by $C^*_e(\cA)$. The existence of the $C^*$-envelope was conjectured by Arveson \cite{arveson1969subalgebras} and proved by Hamana \cite{hamana1979injective}. The $C^*$-envelope of $\cA$ has the universal property that for every $C^*$-cover $\iota \colon \cA \to B$, there exists a surjective homomorphism $\pi \colon B \to C^*_e(\cA)$, such that $\pi \circ \iota = \iota_e$, where $\iota_e$ stands for the completely isometric embedding of $\cA$ in $C^*_e(\cA)$.
    \begin{dfn}
        Let $\mathcal{H}\subset \mathcal{K}$ be Hilbert spaces, $\mathcal{A}$ an operator algebra and
        $$\psi_1\colon\mathcal{A}\to B(\mathcal{H}),\,\psi_2\colon\mathcal{A}\to B(\mathcal{K})$$ be two representations. We say that $\psi_2$ is \textit{dilation} of $\psi_1$, if $P_{\mathcal{H}}\psi_2|_{\mathcal{H}}=\psi_1$. Moreover, $\psi_1$ is called \textit{dilation maximal} if every dilation of $\psi_1$ is trivial, i.e., if $\psi_2$ dilates $\psi_1$, then $\psi_2=\psi_1\oplus\psi$ for some representation $\psi$.
    \end{dfn}
    It follows from Sarason's lemma that $\cH$ is semi-invariant for $\psi_2(A)$. The following definition is due to Arveson \cite{arveson1969subalgebras}.
    \begin{dfn}
        Let $\mathcal{A}$ be an operator algebra. A representation
        $$\pi\colon C^*(\mathcal{S})\to B(\mathcal{H})$$ is said to have the unique extension property if for every unital completely positive map (ucp for short) $\varphi \colon C^*(\mathcal{S})\to B(\mathcal{H})$, such that $\pi|_{\mathcal{A}}=\varphi|_{\mathcal{A}}$, we have that $\pi=\varphi$.
    \end{dfn}
    \begin{dfn}
        Let $(B,\iota)$ be a $C^*$-cover of an operator algebra $\mathcal{A}$. A representation
        $$\psi\colon B \to B(\mathcal{H})$$ is said to be a boundary representation, if it is irreducible and has the unique extension property.
    \end{dfn}
    It follows from results of Muhly and Solel \cite{MuhlySolel-char} (see also \cite{DritMcC-boundary} and \cite{arveson-nc_choquet_I}) that $\pi$ has the unique extension property if and only if $\pi|_{\cA}$ is dilation maximal. The following theorem is a particular case of \cite[Proposition 6.5.1]{davidson2019noncommutative}.
    \begin{thm}
        Let $\mathcal{A}$ be an operator algebra and $\psi \colon C^*_{\max}(\mathcal{A})\to B(\mathcal{H})$ be a boundary representation of $\mathcal{A}$. Let $\pi \colon C^*_{\max}(\mathcal{A})\to C^*_e(\mathcal{A})$ be the surjective $*$-homomorphism guaranteed by the universal properties. Then, there is a representation
        $$\sigma \colon C^*_e(\mathcal{A})\to B(\mathcal{H})$$
        such that $\sigma\circ\pi=\psi$ (that is, $\psi$ factors through $C^*_e(\mathcal{A})$).\label{thm.boundary.rep.of.C^*_e}
    \end{thm}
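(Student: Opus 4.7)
The plan is to build $\sigma$ as a unital completely positive extension of $\psi|_{\mathcal{A}}$ to $C^*_e(\mathcal{A})$ and then exploit the unique extension property of $\psi$ both to verify that $\sigma\circ\pi=\psi$ and to force $\sigma$ to be a $*$-homomorphism. Equivalently, what has to be shown is the inclusion $\ker\pi\subseteq\ker\psi$.

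For the construction, let $\iota_{\max}\colon\mathcal{A}\to C^*_{\max}(\mathcal{A})$ and $\iota_e\colon\mathcal{A}\to C^*_e(\mathcal{A})$ be the canonical completely isometric embeddings; by the universal property of $C^*_{\max}(\mathcal{A})$ we have $\pi\circ\iota_{\max}=\iota_e$. The restriction $\psi\circ\iota_{\max}\colon\mathcal{A}\to B(\mathcal{H})$ is a unital completely contractive homomorphism. Transporting it to the completely isometric copy $\iota_e(\mathcal{A})\subseteq C^*_e(\mathcal{A})$, Meyer's extension theorem produces a ucp extension to the operator system $\iota_e(\mathcal{A})+\iota_e(\mathcal{A})^*$, and then Arveson's extension theorem extends this further to a ucp map $\sigma\colon C^*_e(\mathcal{A})\to B(\mathcal{H})$.

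Next, consider $\sigma\circ\pi\colon C^*_{\max}(\mathcal{A})\to B(\mathcal{H})$, a ucp map that agrees with $\psi$ on $\iota_{\max}(\mathcal{A})$. Since $\psi$ is a boundary representation, it has the unique extension property, hence $\sigma\circ\pi=\psi$. This identity in turn forces $\sigma$ to be a $*$-homomorphism: for $b_1,b_2\in C^*_e(\mathcal{A})$ use surjectivity of $\pi$ to pick lifts $a_i\in C^*_{\max}(\mathcal{A})$ with $\pi(a_i)=b_i$; then
\[
\sigma(b_1 b_2)=\sigma(\pi(a_1 a_2))=\psi(a_1 a_2)=\psi(a_1)\psi(a_2)=\sigma(b_1)\sigma(b_2),
\]
and analogously $\sigma(b^*)=\sigma(b)^*$. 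Thus $\sigma$ is a representation of $C^*_e(\mathcal{A})$ with $\sigma\circ\pi=\psi$.

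The only delicate step is the initial ucp extension of $\psi|_{\mathcal{A}}$ to $C^*_e(\mathcal{A})$: one needs the operator-algebra-to-operator-system bootstrap (Meyer's theorem) before Arveson's extension theorem applies, since the latter is stated for operator systems. Once $\sigma$ has been produced as a ucp map, the unique extension property does all the remaining work, and no computations are needed beyond the bookkeeping above.
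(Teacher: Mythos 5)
Your argument is correct, and it is in fact the standard proof that representations with the unique extension property factor through the $C^*$-envelope. The paper does not prove this statement at all: it simply cites it as a special case of Proposition 6.5.1 in Davidson--Kennedy's noncommutative Choquet theory. So your route is necessarily different in that it is self-contained, and every step holds up: writing $\iota_{\max}$, $\iota_e$ for the canonical embeddings, the compatibility $\pi\circ\iota_{\max}=\iota_e$ is exactly what ``guaranteed by the universal properties'' means; the map $a\mapsto\psi(\iota_{\max}(a))$ transported to $\iota_e(\mathcal{A})$ is unital and completely contractive, hence extends to a ucp map on $\iota_e(\mathcal{A})+\iota_e(\mathcal{A})^*$ and then, by Arveson's extension theorem, to a ucp map $\sigma$ on $C^*_e(\mathcal{A})$; the composite $\sigma\circ\pi$ is ucp and agrees with $\psi$ on $\mathcal{A}$, so the unique extension property forces $\sigma\circ\pi=\psi$; and surjectivity of $\pi$ then makes $\sigma$ multiplicative and $*$-preserving, since every product and adjoint in $C^*_e(\mathcal{A})$ lifts to $C^*_{\max}(\mathcal{A})$ where $\psi$ is a $*$-homomorphism. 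Note that irreducibility of $\psi$ is never needed, only the unique extension property, which is consistent with the general statement in the literature. Two small points of bookkeeping: the first extension step is not really Meyer's theorem (which concerns unitizations of non-unital algebras) but the standard fact that a unital completely contractive map on a unital operator space extends uniquely to a ucp map on the generated operator system via $a+b^*\mapsto\varphi(a)+\varphi(b)^*$; and your argument implicitly uses that $\mathcal{A}$ is unital and that representations are unital, which is the convention in force throughout the paper, so no harm is done.
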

    Before presenting the main result of this section, we need two technical lemmas.
        \begin{lem}
        Fix $0\neq \lambda\in \bb{D}$ and let $\omega\in\bb{D}$.
        \begin{enumerate}
            \item If there exist $\zeta\neq\omega \in \D$, such that
        $B_{\lambda}(\zeta)=B_{\lambda}(\omega)$,
        then
        $$B_{\omega,\zeta}(z):=\frac{z-\zeta}{1-\overline{\zeta}z}\cdot\frac{z-\omega}{1-\overline{\omega}z}\in \node.$$
        \item If $B_{\lambda}'(\omega)=0$, then
        $$B_{\omega}^2(z)=\left(\frac{z-\omega}{1-\overline{\omega}z}\right)^2\in \node.$$
        \end{enumerate}
        \label{lem:Blaschke.diff.points}
    \end{lem}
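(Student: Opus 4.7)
The strategy in both parts is to verify the defining condition $f(0) = f(\lambda)$ by direct computation, exploiting a single algebraic identity extracted from the hypothesis on $\omega$ (and $\zeta$). In each case the value at $0$ is obvious, namely $B_{\omega,\zeta}(0) = b_{\zeta}(0) b_{\omega}(0) = \zeta\omega$ and $B_{\omega}^2(0) = \omega^2$, where $b_{\eta}(z) = \frac{z-\eta}{1-\overline{\eta}z}$. So the content of the lemma is the evaluation at $\lambda$.

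For part (1), I would first expand the equation $B_{\lambda}(\zeta) = B_{\lambda}(\omega)$, i.e.\ $\zeta(\zeta-\lambda)(1-\overline{\lambda}\omega) = \omega(\omega-\lambda)(1-\overline{\lambda}\zeta)$. Rearranging produces a factor of $(\zeta-\omega)$ on the left-hand side; since $\zeta \neq \omega$, dividing out yields the key identity
\[
\zeta + \omega = \lambda + \overline{\lambda}\,\zeta\omega,
\]
together with its complex conjugate $\overline{\zeta}+\overline{\omega} = \overline{\lambda} + \lambda\,\overline{\zeta}\,\overline{\omega}$. Then I would compute the numerator of $B_{\omega,\zeta}(\lambda)$,
\[
(\lambda-\zeta)(\lambda-\omega) = \lambda^2 - \lambda(\zeta+\omega) + \zeta\omega = \lambda^2 - \lambda(\lambda+\overline{\lambda}\zeta\omega) + \zeta\omega = \zeta\omega(1-|\lambda|^2),
\]
and analogously, using the conjugate relation, the denominator simplifies as
\[
(1-\overline{\zeta}\lambda)(1-\overline{\omega}\lambda) = 1 - \lambda(\overline{\zeta}+\overline{\omega}) + \lambda^2\overline{\zeta}\,\overline{\omega} = 1 - |\lambda|^2.
\]
Hence $B_{\omega,\zeta}(\lambda) = \zeta\omega = B_{\omega,\zeta}(0)$, giving $B_{\omega,\zeta}\in \node$.

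For part (2), I would first compute $B_{\lambda}'$ explicitly; its numerator (after clearing the denominator $(1-\overline{\lambda}z)^2$) is $-\overline{\lambda}z^2 + 2z - \lambda$, so the hypothesis $B_{\lambda}'(\omega)=0$ is exactly
\[
2\omega = \lambda + \overline{\lambda}\,\omega^2.
\]
This is precisely the degeneration of the identity from part (1) obtained by setting $\zeta=\omega$, which geometrically reflects that a critical point of $B_\lambda$ corresponds to a double point of the fiber structure. The computation now parallels part (1): one finds $(\lambda-\omega)^2 = \omega^2(1-|\lambda|^2)$ and $(1-\overline{\omega}\lambda)^2 = 1-|\lambda|^2$, whence $B_{\omega}^2(\lambda) = \omega^2 = B_{\omega}^2(0)$.

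There is no real obstacle; both statements reduce to elementary algebra once the fiber-equation is written in the symmetric form $\zeta + \omega - \lambda - \overline{\lambda}\zeta\omega = 0$. The only thing to be a bit careful with is verifying that the conjugate of this relation is what makes the denominators collapse to $1-|\lambda|^2$, which is what forces the two Blaschke factors to conspire into an element of $\node$.
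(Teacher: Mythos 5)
Your proof is correct, but it proceeds differently from the paper's. The paper argues conceptually: it picks a disc automorphism $\varphi$ with $\varphi(B_\lambda(\omega))=0$ and observes that $\varphi\circ B_\lambda$ is a degree-two Blaschke product which automatically lies in $\node$ (since $B_\lambda(0)=B_\lambda(\lambda)=0$, the composition takes the same value at $0$ and $\lambda$); in case (1) its zeroes are $\zeta$ and $\omega$, and in case (2) the chain rule gives $(\varphi\circ B_\lambda)'(\omega)=0$, so it has a double zero at $\omega$; hence $\varphi\circ B_\lambda=e^{i\theta}B_{\omega,\zeta}$, respectively $e^{i\theta}B_\omega^2$, and stripping the unimodular constant gives the claim. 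You instead extract the explicit fiber identity $\zeta+\omega=\lambda+\overline{\lambda}\zeta\omega$ (and its degeneration $2\omega=\lambda+\overline{\lambda}\omega^2$ at a critical point) and verify $f(0)=f(\lambda)$ by hand; your algebra checks out, including the cancellation of the denominators to $1-|\lambda|^2\neq 0$ and the use of $\zeta\neq\omega$ to divide out the factor $\zeta-\omega$. The paper's route is shorter and explains structurally why any symmetric expression in a fiber of $B_\lambda$ lands in $\node$ (any composition $\varphi\circ B_\lambda$ does), while yours is more elementary and has the side benefit of producing the explicit common value ($\zeta\omega$, resp.\ $\omega^2$) and the symmetric fiber equation, which can be handy in later computations. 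Both are complete proofs.
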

    \begin{proof}
        Let $\varphi\in \operatorname{Aut}(\bb{D})$ be such that $\varphi\left(B_{\lambda}(\omega)\right)=0$. 
        \begin{enumerate}
            \item 
        In the first case,
        $\varphi\circ B_{\lambda}$ is a Blaschke product with 
        $2$ zeroes that vanishes at $\zeta$ and $\omega$, hence
        $$\varphi\circ B_{\lambda}=e^{i\theta}\frac{z-\zeta}{1-\overline{\zeta}z}\cdot\frac{z-\omega}{1-\overline{\omega}z}\in \node.$$
        \item In the second case, by the chain rule
        $$\left(\varphi\circ B_{\lambda}\right)'(\omega)=\varphi'\left(B_{\lambda}(\omega)\right)B'_{\lambda}(\omega)=0,$$
        i.e., $\varphi\circ B_{\lambda}$ is a Blaschke product with at most $2$ zeroes with a double zero at $\omega$. Therefore
        $$\varphi\circ B_{\lambda}(z)=e^{i\theta}B_\omega^2\in \node.$$
        \end{enumerate}      
    \end{proof}

    \begin{lem} 
        Let $0\neq \lambda \in \D$. Let $z_1,\ldots,z_4\in\bb{D}\setminus\{0,\lambda\}$ be such that $B_{\lambda}(z_1) = B_{\lambda}(z_4)$ and $B_{\lambda}(z_2) = B_{\lambda}(z_3)$. Assume, furthermore, that there exists $\omega \in \D$ such that $B_{z_1,z_2,\omega} \in \node$.
        Then, there exists some $\zeta\in\bb{D}$ such that
        $$B_{z_3,z_4,\zeta}(z)=\frac{z-z_3}{1-\overline{z_3}z}\cdot\frac{z-z_4}{1-\overline{z_4}z}\cdot\frac{z-\zeta}{1-\overline{\zeta}z}\in \node.$$
        Moreover, if $z_1,\ldots,z_4\in\bb{R}$ and $\lambda \in \R$, then $\omega,\,\zeta\in\bb{R}$.\label{lem:existence.of.point}
    \end{lem}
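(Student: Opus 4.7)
The plan is to translate every condition of the form $\varphi_{a_1}\cdots\varphi_{a_k}\in\node$ into a multiplicative identity and then solve for $\zeta$. For $a\in\D\setminus\{0,\lambda\}$, set $\mu(a):=\varphi_a(\lambda)/\varphi_a(0)$, where $\varphi_a(z)=(z-a)/(1-\bar{a}z)$. Then $\varphi_{a_1}\cdots\varphi_{a_k}\in\node$ (i.e., the values at $0$ and $\lambda$ agree) is equivalent to $\mu(a_1)\cdots\mu(a_k)=1$. This turns the lemma into a purely algebraic question in $\C^{\times}$.

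Applying Lemma \ref{lem:Blaschke.diff.points}, the hypotheses $B_\lambda(z_1)=B_\lambda(z_4)$ and $B_\lambda(z_2)=B_\lambda(z_3)$ yield $\mu(z_1)\mu(z_4)=1$ and $\mu(z_2)\mu(z_3)=1$, while the assumption $B_{z_1,z_2,\omega}\in\node$ gives $\mu(z_1)\mu(z_2)\mu(\omega)=1$. The desired conclusion $B_{z_3,z_4,\zeta}\in\node$ reads $\mu(z_3)\mu(z_4)\mu(\zeta)=1$, and substituting the first two identities reduces this to $\mu(\omega)\mu(\zeta)=1$. By Lemma \ref{lem:Blaschke.diff.points} again, this is equivalent to $B_\lambda(\zeta)=B_\lambda(\omega)$. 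Such $\zeta\in\D$ always exists because $B_\lambda$ is a degree two Blaschke product, hence surjective onto $\D$ with fibers of size two counted with multiplicity: if $B_\lambda'(\omega)\neq 0$ take $\zeta$ to be the unique second preimage of $B_\lambda(\omega)$, and otherwise $\zeta:=\omega$ works via Lemma \ref{lem:Blaschke.diff.points}(2).

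For the ``moreover'' part, under the reality assumption $\mu(z_1),\mu(z_2)\in\R$, so the equation $\mu(\omega)=(\mu(z_1)\mu(z_2))^{-1}=:c$ has a real right-hand side. Expanding $\mu(\omega)=(\lambda-\omega)/(-\omega(1-\bar\omega\lambda))=c$ and clearing denominators gives $c\lambda|\omega|^2+(1-c)\omega-\lambda=0$, whose imaginary part (writing $\omega=x+iy$) is $(1-c)y=0$. Since the four $z_i$ are distinct, $z_2\notin\{z_1,z_4\}$, so $B_\lambda(z_1)\neq B_\lambda(z_2)$, hence $c\neq 1$ and therefore $y=0$, giving $\omega\in\R$. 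Then $B_\lambda(\zeta)=B_\lambda(\omega)$ becomes a real quadratic in $\zeta$ with real root $\omega$, so its second root $\zeta$ also lies in $\R\cap\D$.

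The main conceptual step is the $\mu$-bookkeeping in the second paragraph, which turns the three constraints into the single equation $\mu(\omega)\mu(\zeta)=1$; after that, existence of $\zeta$ is an elementary fact about degree two Blaschke products, and the reality is an easy real/imaginary-part argument.
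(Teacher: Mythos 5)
Your existence argument is correct and is essentially the paper's proof in a lightly repackaged form: normalizing each membership condition $\varphi_{a_1}\cdots\varphi_{a_k}\in\node$ by its value at $0$ turns the paper's bookkeeping with the identity $B_{z_1,z_2}\cdot B_{z_3,z_4}=B_{z_1,z_4}\cdot B_{z_2,z_3}$ into your single relation $\mu(\omega)\mu(\zeta)=1$, and your case split on whether $B_{\lambda}'(\omega)=0$, using that $B_{\lambda}$ is two-to-one, is exactly the paper's. Two small housekeeping points: you should record that the hypotheses force $\omega\notin\{0,\lambda\}$ (evaluate $B_{z_1,z_2,\omega}$ at $0$ and at $\lambda$), since otherwise $\mu(\omega)$ is undefined; and note that Lemma \ref{lem:Blaschke.diff.points} only gives the implication from $B_{\lambda}(\zeta)=B_{\lambda}(\omega)$ to $\mu(\omega)\mu(\zeta)=1$, not the ``equivalence'' you assert — harmless here, because only that direction is used to produce $\zeta$.

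The ``moreover'' part, however, contains a genuine gap. To exclude $c=1$ you invoke (i) distinctness of all four $z_i$, which is not a hypothesis of Lemma \ref{lem:existence.of.point}, and (ii) the inference ``$B_{\lambda}(z_1)\neq B_{\lambda}(z_2)$, hence $c\neq 1$'', which is the \emph{converse} of Lemma \ref{lem:Blaschke.diff.points}(1) (namely $\varphi_{z_1}\varphi_{z_2}\in\node\Rightarrow B_{\lambda}(z_1)=B_{\lambda}(z_2)$); that converse is true but is not stated in the cited lemma and would need its own short argument. Neither detour is needed: $c=1$ is already impossible from your own equation $c\lambda|\omega|^2+(1-c)\omega-\lambda=0$, since $c=1$ forces $\lambda(|\omega|^2-1)=0$, contradicting $\lambda\neq 0$ and $|\omega|<1$. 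This is exactly how the paper argues (in its normalization the excluded value is $c=-1$), and with this replacement your imaginary-part computation gives $\omega\in\R$ without any extra assumptions. Your treatment of $\zeta$ — a real quadratic with one real root $\omega$, both roots in $\D$ because $B_{\lambda}$ is a proper degree-two self-map of $\D$ — is fine and is a slightly more explicit version of the paper's ``similar argument.''
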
   
    \begin{proof}
        By Lemma \ref{lem:Blaschke.diff.points} and our assumption,
        $$B_{z_1,z_4},\,B_{z_2,z_3}\in \node.$$
        If $B_{\lambda}'(\omega)=0$, then $\left(\frac{z-\omega}{1-\overline{\omega}z}\right)^2\in \node$. However,
        $$B_{z_3,z_4,\omega}(0)=\frac{B_{z_1,z_2}(0)\cdot B_{z_3,z_4}(0)\cdot B_{\omega}^2(0)}{B_{z_1,z_2,\omega}(0)}=\frac{B_{z_1,z_2}(\lambda)\cdot B_{z_3,z_4}(\lambda)\cdot B_{\omega}^2(\lambda)}{B_{z_1,z_2,\omega}(\lambda)}=B_{z_3,z_4,\omega}(\lambda).$$
        Here we used the fact that
        $$B_{z_1,z_2}\cdot B_{z_3,z_4}=B_{z_1,z_4}\cdot B_{z_2,z_3}.$$
        Otherwise, since $B_{\lambda}$ is $2$ to $1$, there exists some $\zeta\neq\omega$ such that $B_{\lambda}(\zeta)=B_{\lambda}(\omega)$. By Lemma \ref{lem:Blaschke.diff.points}, $B_{\zeta,\omega}\in \node$. By similar arguments,
        $$B_{z_3,z_4,\zeta}(0)=\frac{B_{z_1,z_2}(0)\cdot B_{z_3,z_4}(0)\cdot B_{\zeta,\omega}(0)}{B_{z_1,z_2,\omega}(0)}=\frac{B_{z_1,z_2}(\lambda)\cdot B_{z_3,z_4}(\lambda)\cdot B_{\zeta,\omega}(\lambda)}{B_{z_1,z_2,\omega}(\lambda)}=B_{z_3,z_4,\zeta}(\lambda).$$
        Lastly, assume $z_1,\ldots,z_4, \lambda$ are real. By plugging $0$ and $\lambda$ into $B_{z_1,z_2,\omega}$ we obtain the equality
        $$-z_1z_2\omega=\frac{\lambda-z_1}{1-z_1z}\cdot\frac{\lambda-z_2}{1-z_2z}\cdot\frac{\lambda-\omega}{1-\overline{\omega}\lambda}.$$
        Therefore,
        $$\omega\cdot\frac{1-\overline{\omega}\lambda}{\lambda-\omega}=\frac{\omega-|\omega|^2\lambda}{\lambda-\omega}\in\bb{R}.$$
        Consequently, there exists $c\in\bb{R}$ such that
        $$(c+1)\omega=\lambda(c+|\omega|^2)\in\bb{R}.$$
        Since $\lambda \neq 0$ and $|\omega| < 1$. we must have $c \neq -1$. Therefore, we conclude that $\omega\in\bb{R}$.
        A similar argument shows that $\zeta\in\bb{R}$.
    \end{proof}
 From here on out, $0\neq\lambda\in\bb{R}\cap\bb{D}$ is fixed. The following definition encodes the technical properties that we need to ensure the abundance of boundary representations.
        \begin{dfn} \label{dfn:big}
            Four points $z_1,\ldots,z_4\in\bb{D}\cap\bb{R}\setminus\left\{0,\lambda\right\}$ will be called \textbf{good points} with respect to $\lambda$, if there exists an infinite family $\mathcal{A}\subset\bb{T}\subset\bb{G}(1\times 1)$ such that the following conditions hold:
            \begin{enumerate}
                \item $B_{\lambda}(z_1)=B_{\lambda}(z_4),\,B_{\lambda}(z_2)=B_{\lambda}(z_3)$.
                \item There exists $\omega\in\bb{D}\cap\bb{R}$ such that
                $$B_{z_1,z_2,\omega}(z):=\frac{z-z_1}{1-\overline{z_1}z}\cdot\frac{z-z_2}{1-\overline{z_2}z}\cdot\frac{z-\omega}{1-\overline{\omega}z}\in \node.$$
                    \item For every $(\alpha,\beta)\in\mathcal{A}$, the matrix
                    $$\left[k^{\alpha,\beta}(z_i,z_j)\right]_{i,j=1}^4$$
                    has a column (equivalently, row) with no zeroes.
                 
                    \item  For every $(\alpha,\beta)\neq (\alpha',\beta')\in \mathcal{A}$ there exists a pair $z_i\neq z_j$, such that
                    \begin{equation}
        \left|\frac{k^{\alpha,\beta}(z_j,z_i)}{\sqrt{k^{\alpha,\beta}(z_i,z_i)\cdot k^{\alpha,\beta}(z_j,z_j)}}\right|^2\neq \left|\frac{k^{\alpha',\beta'}(z_j,z_i)}{\sqrt{k^{\alpha',\beta'}(z_i,z_i)\cdot k^{\alpha',\beta'}(z_j,z_j)}}\right|^2.\label{eq:0}
             \end{equation}
            
        \item For every $(\alpha,\beta)\in\mathcal{A}$,
        $$\begin{cases}
            k_{z_{\ell}}\not\in \operatorname{span}\{k^{\alpha,\beta}_{z_3},k^{\alpha,\beta}_{z_4},\,P_{\mathcal{M}_{\alpha,\beta}}k_{\zeta}^{\alpha,\beta}\}&\ell\in\{1,2\}
            \\
            k_{z_{\ell}}\not\in\operatorname{span}\{k^{\alpha,\beta}_{z_1},k^{\alpha,\beta}_{z_2},\,P_{\mathcal{M}_{\alpha,\beta}}k_{\omega}^{\alpha,\beta}\}&\ell\in\{3,4\}
        \end{cases},\,\begin{cases}
            f_{\ell}\not\in\operatorname{span}\{k^{\alpha,\beta}_{z_1},k^{\alpha,\beta}_{z_2},\,P_{\mathcal{M}_{\alpha,\beta}}k_{\omega}^{\alpha,\beta}\}&\ell\in\{1,2\}
            \\
            f_{\ell}\not\in\operatorname{span}\{k^{\alpha,\beta}_{z_3},k^{\alpha,\beta}_{z_4},\,P_{\mathcal{M}_{\alpha,\beta}}k_{\zeta}^{\alpha,\beta}\}&\ell\in\{3,4\}
        \end{cases},$$
        where $\zeta$ is some point satisfying $B_{z_3,z_4,\zeta}\in \node$ that exists by Lemma \ref{lem:existence.of.point}, and $\{f_1,\ldots,f_4\}\subset \cM_{\alpha,\beta}$ is the dual basis of $B=\{k^{\alpha,\beta}_{z_i} \mid 1\leq i\leq 4\}$ (i.e., $f_i(z_j)=\delta_{i,j}$). 
        \end{enumerate}
        \end{dfn}
        \begin{thm} \label{thm:good_then_big_envelope}
            Let $z_1,\ldots,z_4 \in \D \setminus\{0,\lambda\}$ be good points and let $I \subset \node$ be the ideal of functions that vanish on $z_1,\ldots,z_4$. Then, the representations $\sigma_{\alpha,\beta} \colon C^*_{max}(\node/I) \to B(\cM_{\alpha,\beta})$ that extend the representations $\rho_{\alpha,\beta}$ for $(\alpha, \beta) \in\mathcal{A}$ form an infinite family of unitarily inequivalent boundary representations. In particular, $C^*_e(\node/I)$ is infinite-dimensional.
            \label{thm:good.points}
        \end{thm}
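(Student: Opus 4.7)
My plan is to verify that each $\rho_{\alpha,\beta}$ for $(\alpha,\beta)\in\mathcal{A}$ is an irreducible, dilation-maximal representation of $\node/I$. By Muhly--Solel (cited above), this makes its extension $\sigma_{\alpha,\beta}$ to $C^*_{\max}(\node/I)$ a boundary representation, which by Theorem \ref{thm.boundary.rep.of.C^*_e} factors through $C^*_e(\node/I)$. A separate argument shows that $\{\sigma_{\alpha,\beta}\}_{(\alpha,\beta)\in\mathcal{A}}$ are pairwise unitarily inequivalent; as $\mathcal{A}$ is infinite and a finite-dimensional $C^*$-algebra has only finitely many inequivalent irreducible $*$-representations, $C^*_e(\node/I)$ must be infinite-dimensional.

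\textbf{Irreducibility.} By Lemma \ref{lem:diagonalizable}, $T:=\rho_{\alpha,\beta}(h+I)$ has four distinct eigenvalues $z_1,\ldots,z_4$, with $T^*k^{\alpha,\beta}_{z_i}=\overline{z_i}k^{\alpha,\beta}_{z_i}$ and $Tf_i=z_if_i$, where $\langle f_i,k^{\alpha,\beta}_{z_j}\rangle=\delta_{ij}$. The spectral idempotents $p_i$ (onto $\C f_i$) and $q_j$ (onto $\C k^{\alpha,\beta}_{z_j}$) are polynomials in $T$ and $T^*$, hence lie in $C^*(\rho_{\alpha,\beta}(\node/I))$. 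Expanding $k^{\alpha,\beta}_{z_j}=\sum_m k^{\alpha,\beta}(z_m,z_j)\,f_m$ (via $\langle k^{\alpha,\beta}_{z_j},k^{\alpha,\beta}_{z_n}\rangle=k^{\alpha,\beta}(z_n,z_j)$) one checks
\[
p_iq_j=k^{\alpha,\beta}(z_i,z_j)\,\Theta_{ij},\qquad \Theta_{ij}(v):=\langle v,f_j\rangle f_i.
\]
Condition (3) gives $j_0$ with $k^{\alpha,\beta}(z_i,z_{j_0})\neq 0$ for every $i$, so $\Theta_{ij_0}$ lies in the generated $C^*$-algebra for each $i$. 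Since $\Theta_{ij_0}^*=\Theta_{j_0 i}$, the products $\Theta_{ij_0}\Theta_{j_0 i'}=\|f_{j_0}\|^2\,\Theta_{ii'}$ recover every rank-one $\Theta_{ii'}$, so $C^*(\rho_{\alpha,\beta}(\node/I))=B(\mathcal{M}_{\alpha,\beta})$.

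\textbf{Dilation maximality.} Suppose, toward contradiction, that $\psi\colon \node/I\to B(\mathcal{K})$ is a dilation with $\mathcal{M}_{\alpha,\beta}\subsetneq \mathcal{K}$ semi-invariant, and write $\mathcal{N}=\mathcal{K}\ominus\mathcal{M}_{\alpha,\beta}\neq 0$. Conditions (1), (2) and Lemma \ref{lem:existence.of.point} furnish elements $B_{z_1,z_2,\omega},B_{z_3,z_4,\zeta}\in\node$ with controlled vanishing sets. The action of $\rho_{\alpha,\beta}$ on these is explicit on the eigenbasis (for instance $\rho_{\alpha,\beta}(B_{z_1,z_2,\omega})^*$ annihilates $k^{\alpha,\beta}_{z_1}$ and $k^{\alpha,\beta}_{z_2}$); lifting to $\psi$ via the multiplicativity cocycle $\psi(fg)^*v=\psi(g)^*\psi(f)^*v$, the semi-invariant structure forces specific vectors in $\mathcal{N}$ whose back-projections to $\mathcal{M}_{\alpha,\beta}$ are computable. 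A careful accounting identifies these projections with candidates that would have to equal $k_{z_\ell}$ or $f_\ell$ inside the spans appearing in condition (5). Since (5) explicitly forbids such memberships, we conclude $\mathcal{N}=0$, so every dilation is trivial and $\rho_{\alpha,\beta}$ is dilation maximal.

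\textbf{Pairwise inequivalence.} A unitary $U\colon \mathcal{M}_{\alpha,\beta}\to\mathcal{M}_{\alpha',\beta'}$ intertwining $\rho_{\alpha,\beta}$ and $\rho_{\alpha',\beta'}$ must conjugate $T^*$ to $T'^*$, both having spectrum $\{\overline{z_1},\ldots,\overline{z_4}\}$ with one-dimensional eigenspaces. Hence $Uk^{\alpha,\beta}_{z_i}=\mu_ik^{\alpha',\beta'}_{z_i}$ for scalars $\mu_i$, and pairing two such equations via unitarity yields
\[
\frac{|k^{\alpha,\beta}(z_j,z_i)|^2}{k^{\alpha,\beta}(z_i,z_i)\,k^{\alpha,\beta}(z_j,z_j)}=\frac{|k^{\alpha',\beta'}(z_j,z_i)|^2}{k^{\alpha',\beta'}(z_i,z_i)\,k^{\alpha',\beta'}(z_j,z_j)}
\]
for all $i\neq j$, contradicting condition (4). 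So the $\sigma_{\alpha,\beta}$ are pairwise inequivalent, and $C^*_e(\node/I)$ carries infinitely many inequivalent irreducible $*$-representations.

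The technically delicate step is dilation maximality: one must set up the bookkeeping so that the $\psi$-action of $B_{z_1,z_2,\omega}$ and $B_{z_3,z_4,\zeta}$ on the kernel basis yields vectors whose projection back onto $\mathcal{M}_{\alpha,\beta}$ lands precisely where condition (5) forbids. Choosing \emph{which} combinations to probe, and \emph{which} span from (5) they must hit, is what converts the purely geometric hypothesis (5) into the dilation-maximality conclusion.
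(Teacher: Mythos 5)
Your irreducibility and inequivalence steps are fine: the spectral-idempotent computation $p_iq_j=k^{\alpha,\beta}(z_i,z_j)\Theta_{ij}$ together with condition (3) is a correct (and slightly more direct) variant of the paper's Gram-matrix argument, and the inequivalence argument via condition (4) is the same as the paper's. The genuine gap is dilation maximality, which is the heart of the theorem and is only gestured at in your write-up ("a careful accounting identifies these projections\ldots"). Nothing in your sketch explains how condition (5) actually enters, and the framing is also off: dilation maximality does \emph{not} mean $\mathcal{N}=\mathcal{K}\ominus\mathcal{M}_{\alpha,\beta}=0$ (e.g.\ $\rho_{\alpha,\beta}\oplus\rho_{\alpha,\beta}$ is a dilation with $\mathcal{N}\neq 0$); what must be shown is that $\mathcal{M}_{\alpha,\beta}$ is \emph{reducing} for $\psi$, i.e.\ that the off-diagonal corners of $\psi(h+I)$ vanish, and this splits into two separate problems (no nontrivial coextensions and no nontrivial extensions) which require different arguments and use different halves of condition (5) (the $k_{z_\ell}$ non-membership for coextensions, the $f_\ell$ non-membership for extensions).

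Concretely, the missing chain of ideas is the following. First, one reduces to one-dimensional co/extensions: since $m(\psi(h+I))=0$ for $m(z)=\prod_i(z-z_i)$, the unknown corner $G$ is diagonalizable with eigenvalues among the $z_i$, so it suffices to rule out co/extensions on $\mathcal{M}_{\alpha,\beta}\oplus\C$. Second, for such a one-dimensional coextension one needs functional calculus for triangular operators (Lemma \ref{lem.matrix.powers}) to compute $\varphi(\psi(h+I))$ for Blaschke products $\varphi\in\node$, and the contractivity of $\psi(\varphi+I)$ then yields the operator inequality $I-P_{\mathcal{M}_{\alpha,\beta}}M_{\varphi}M_{\varphi}^*|_{\mathcal{M}_{\alpha,\beta}}\geq \tfrac{1}{1+\|f\|^2}ff^*$ for the vector $f$ parametrizing the corner. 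Third, Lemma \ref{rmrk:0} localizes the range of this defect inside $P_{\mathcal{M}_{\alpha,\beta}}K_{\varphi}$, the (projected) span of kernel functions at the zeroes of $\varphi$; applying this \emph{twice}, first with the degree-two product $B_{z_\ell,z_{\tau(\ell)}}\in\node$ (possible by condition (1)) to force $f\in\C k_{z_{\tau(\ell)}}$ (resp.\ $f\in\C f_{\tau(\ell)}$ in the extension case), and then with $B_{z_1,z_2,\omega}$ or $B_{z_3,z_4,\zeta}$ (conditions (2) and Lemma \ref{lem:existence.of.point}) to place $f$ in the three-dimensional spans that condition (5) excludes, forces $f=0$. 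Without this two-stage argument (Propositions \ref{prop.no-non-trivial.coextensions} and \ref{prop.no-non-trivial.extensions} and Proposition \ref{prop.f=0} in the paper), condition (5) is never actually used and the boundary property is not established; as it stands, your proposal proves only that the $\sigma_{\alpha,\beta}$ are irreducible and pairwise inequivalent, which by itself says nothing about $C^*_e(\node/I)$.
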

        We break the proof of Theorem \ref{thm:good.points} into two parts. First we show that for every $(\alpha,\beta),\,(\alpha',\beta')\in \mathcal{A}$, the representations $\rho_{\alpha,\beta}$ and $\rho_{\alpha',\beta'}$ are unitarily inequivalent and irreducible. We then proceed to show that they are boundary representations for $\node$.

        \begin{prop}
            The family of representations $(\sigma_{\alpha,\beta})_{(\alpha,\beta)\in\mathcal{A}}$ of $C^*_{max}(\node/I)$ 
            is a family of irreducible unitarily inequivalent representations. 
        \end{prop}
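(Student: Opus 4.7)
The plan is to reduce both irreducibility and pairwise inequivalence to the explicit simultaneous diagonal structure of $T=\sigma_{\alpha,\beta}(h+I)$. First I would observe that since $\node/I$ is generated by $h+I$ (Lemma \ref{lem:diagonalizable}), the $*$-algebra $\sigma_{\alpha,\beta}(C^*_{\max}(\node/I))$ inside $B(\mathcal{M}_{\alpha,\beta})$ is generated by $T$ and $T^*$. By Lemma \ref{lem.eigenvalues.of.multiplier} the reproducing kernels $k^{\alpha,\beta}_{z_i}$ are eigenvectors of $T^*$ with the distinct eigenvalues $\overline{z_i}$; a short reproducing-property computation then shows that the dual basis $\{f_j\}$ given by $f_j(z_i)=\delta_{ij}$ consists of eigenvectors of $T$ itself with eigenvalues $z_j$ (from $(Tf_j)(z_i) = \langle f_j, T^* k^{\alpha,\beta}_{z_i}\rangle = z_i\delta_{ij} = z_j f_j(z_i)$ and the fact that evaluation at the $z_i$ separates the four-dimensional space $\mathcal{M}_{\alpha,\beta}$). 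The two eigenbases are linked by the Gram matrix: $k^{\alpha,\beta}_{z_i} = \sum_j k^{\alpha,\beta}(z_j,z_i)\, f_j$.

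For irreducibility, I would take $S$ in the commutant of $\sigma_{\alpha,\beta}(C^*_{\max}(\node/I))$, so $S$ commutes with $T$ and $T^*$. Because each eigenspace is one-dimensional, there exist scalars $\mu_i,\nu_j$ with $Sk^{\alpha,\beta}_{z_i}=\mu_i k^{\alpha,\beta}_{z_i}$ and $Sf_j=\nu_j f_j$. Applying $S$ to the change-of-basis relation and matching coefficients forces $\mu_i=\nu_j$ whenever $k^{\alpha,\beta}(z_j,z_i)\neq 0$. Condition (3) of Definition \ref{dfn:big} supplies a column of the Gram matrix with no vanishing entries, which chains all of these equalities through a single $\mu_{i_0}$; combined with the strict positivity of the diagonal entries $k^{\alpha,\beta}(z_i,z_i)>0$ this collapses every $\mu_i$ and $\nu_j$ to a common scalar, so $S\in\mathbb{C}I$.

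For pairwise inequivalence, a unitary intertwiner $U\colon \mathcal{M}_{\alpha,\beta}\to\mathcal{M}_{\alpha',\beta'}$ must conjugate $T^*$ to $T'^*$, and hence map each one-dimensional $\overline{z_i}$-eigenline to its counterpart, yielding $Uk^{\alpha,\beta}_{z_i}=c_i\, k^{\alpha',\beta'}_{z_i}$ for nonzero scalars $c_i$. Taking inner products turns unitarity into $c_i\overline{c_j}\, k^{\alpha',\beta'}(z_j,z_i)=k^{\alpha,\beta}(z_j,z_i)$; setting $i=j$ pins down $|c_i|^2=k^{\alpha,\beta}(z_i,z_i)/k^{\alpha',\beta'}(z_i,z_i)$, and substituting back into the $i\neq j$ relation gives
\begin{equation*}
\left|\frac{k^{\alpha,\beta}(z_j,z_i)}{\sqrt{k^{\alpha,\beta}(z_i,z_i)\, k^{\alpha,\beta}(z_j,z_j)}}\right|^2 = \left|\frac{k^{\alpha',\beta'}(z_j,z_i)}{\sqrt{k^{\alpha',\beta'}(z_i,z_i)\, k^{\alpha',\beta'}(z_j,z_j)}}\right|^2
\end{equation*}
for every $i\neq j$, which directly contradicts condition (4) of Definition \ref{dfn:big}.

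The main conceptual step is spotting that $T$ and $T^*$ are simultaneously diagonalized in \emph{two distinct non-orthogonal} eigenbases woven together by the Gram matrix $[k^{\alpha,\beta}(z_i,z_j)]$; once this is in place, conditions (3) and (4) of the goodness definition were tailor-made to terminate the two arguments, and the remaining work is routine linear algebra. The only mildly subtle point is confirming that it is enough to work with operators commuting with $T$ and $T^*$ (rather than with the whole image of $C^*_{\max}$), but this is immediate once one invokes that $h+I$ generates $\node/I$.
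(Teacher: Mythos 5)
Your proof is correct, and it splits naturally into two halves of different character relative to the paper. The unitary inequivalence half is essentially the paper's own argument: an intertwining unitary must send each one-dimensional $\overline{z_i}$-eigenline of $\rho_{\alpha,\beta}(h+I)^*$ to the corresponding eigenline for $(\alpha',\beta')$, so $Uk^{\alpha,\beta}_{z_i}=c_ik^{\alpha',\beta'}_{z_i}$, and preservation of inner products yields exactly the equality of normalized quantities that condition $(4)$ of Definition \ref{dfn:big} forbids. The irreducibility half, however, takes a genuinely different route. The paper proves more: working in an orthonormal basis obtained from $G^{1/2}$, it shows that $C^*(\rho_{\alpha,\beta}(\node/I))$ is all of $M_4(\C)$ by producing $G^{-1}$ (hence $G^{1/2}$) from the operators $A_i=G^{-1/2}E_{ii}G^{1/2}$ and then manufacturing the matrix units $E_{i,j}$, with condition $(3)$ guaranteeing the non-vanishing kernel entries $k^{\alpha,\beta}(z_1,z_j)$ needed to reach every $E_{1,j}$. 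You instead compute the commutant of $\{T,T^*\}$ (legitimate, since $h+I$ generates $\node/I$, so the image of $C^*_{\max}(\node/I)$ is $C^*(T)$): $T^*$ is diagonalized by the kernels and $T$ by the dual basis $\{f_j\}$, the change of basis is the Gram matrix, and condition $(3)$ forces every operator commuting with both to be scalar. Your argument is shorter and avoids the square roots $G^{\pm1/2}$ entirely, at the cost of not exhibiting that the image is the full matrix algebra (equivalent, of course, in finite dimensions, but the paper's construction makes it explicit). The small points you flag — that $f_j$ is indeed a $z_j$-eigenvector of $T$ because elements of $\cM_{\alpha,\beta}$ are determined by their values at the nodes, and that it suffices to commute with $T$ and $T^*$ — are handled correctly.
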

        \begin{proof}
            Fix $(\alpha,\beta)\in\mathcal{A}$. Recall that in the scalar case, a basis for $\mathcal{M_{\alpha,\beta}}$ is
    $$B=\left\{k^{\alpha,\beta}_{z_1},k^{\alpha,\beta}_{z_2},k^{\alpha,\beta}_{z_3},k^{\alpha,\beta}_{z_4}\right\}.$$
    For every $f\in \node$, by Lemma \ref{lem.eigenvalues.of.multiplier}, $M_f^*$ is diagonal with respect to $B$. Let $G$ be the Gram matrix of $B$, namely,
$$(G)_{i,j}=\ip{k^{\alpha,\beta}_{z_j}}{k^{\alpha,\beta}_{z_i}}=k^{\alpha,\beta}(z_i,z_j).$$
Then,
$$[P_{\mathcal{M_{\alpha,\beta}}}M_f|_{\mathcal{M_{\alpha,\beta}}}]_{C}=\left([M_f^*|_{\mathcal{M}_{\alpha,\beta}}]_{C}\right)^*=G^{-1/2}\begin{pmatrix}
f(z_1)&0&0&0
\\
0&f(z_2)&0&0
\\
0&0&f(z_3)&0
\\
0&0&0&f(z_4)
\end{pmatrix}G^{1/2}=G^{-\frac{1}{2}}D_fG^{\frac{1}{2}},$$
where $C$ is the orthonormal basis $\left\{\sum_{j=1}^4 (G^{-\frac{1}{2}})_{j,i}k^{\alpha,\beta}_{z_j} \mid 1\leq i\leq 4\right\}$.
\\
\hspace{0.25cm}

Since $B(\mathcal{M_{\alpha,\beta}})\cong M_4(\bb{C})$ as $C^*$-algebras via $T\to [T]_{C}$, it is enough to show that
$$C^*(\rho_{\alpha,\beta}(\node/I)) \cong C^*(\underbrace{\left\{[P_{\mathcal{M_{\alpha,\beta}}}M_f|_{\mathcal{M_{\alpha,\beta}}}]_{C} \mid f\in \node\right\}}_{\mathcal{S}})=M_4(\bb{C}).$$
To see that, for $1\leq i\leq 4$ let $f_i\in \node$ be such that $f_i(z_j)=\delta_{i,j}$ and let
$$A_i= P_{\mathcal{M_{\alpha,\beta}}}M_{f_i}|_{\mathcal{M_{\alpha,\beta}}} =  G^{-\frac{1}{2}}E_{ii} G^{\frac{1}{2}}\in C^*(\rho_{\alpha,\beta}(\node/I)).$$
Then, $$\sum_{i=1}^4 \frac{1}{k^{\alpha,\beta}(z_i,z_i)}A_iA_i^*=G^{-1}\in C^*(\mathcal{S}).$$
As $C^*$ algebras are closed under square-roots, $$G^{1/2}\in C^*(\mathcal{S}).$$
Moreover, note that,
$$A_iA_j^*=G^{-\frac{1}{2}}D_{f_i}GD_{f_j}G^{-\frac{1}{2}}=k^{\alpha,\beta}(z_i,z_j)G^{-\frac{1}{2}}E_{i,j}G^{-\frac{1}{2}}.$$
By $(3)$ in Definition \ref{dfn:big}, we can assume without loss of generality that $$k^{\alpha,\beta}(z_1,z_2),\,k^{\alpha,\beta}(z_1,z_3),\,k^{\alpha,\beta}(z_1,z_4)\neq 0.$$
Hence, 
$\frac{1}{k^{\alpha,\beta}(z_i,z_j)}G^{\frac{1}{2}}A_iA_j^*G^{\frac{1}{2}}=G^{1/2}G^{-1/2}E_{i,j}G^{-1/2}G^{1/2}=E_{i,j}\in C^*(\rho_{\alpha,\beta}(\node/I))$
for $1\leq i=j\leq 4$ and $i=1,\,j=2,3,4$. Consequently,
$$E_{2,3}=E_{2,1}E_{1,3},\,E_{2,4}=E_{2,1}E_{1,4},\,E_{3,4}=E_{3,1}E_{1,4}\in C^*(\rho_{\alpha,\beta}(\node/I)),$$ i.e., $\left\{E_{i,j} \mid 1\leq i,j\leq 4\right\}\subset C^*(\rho_{\alpha,\beta}(\node/I))$, concluding the proof of irreducibility. For the second part, recall that
$\sigma_{\alpha,\beta}$ and $\sigma_{\alpha',\beta'}$ are unitarily equivalent if there exists a unitary  $U \colon \mathcal{M_{\alpha,\beta}}\to M_{\alpha',\beta'}$, such that for every $f+I\in \node/I$,
    $$U\rho_{\alpha,\beta}(f+I)^*=\rho_{\alpha',\beta'}(f+I)^*U.$$ Equivalently, there exists some unitary $U$ such that for all $f+I\in \node/I$, 
$$UM_f^*|_{\mathcal{M_{\alpha,\beta}}}=M_f^*|_{\mathcal{M}_{\alpha',\beta'}}U.$$
    In particular, if $f_i$ is the polynomial satisfying $f_i(z_j)=\delta_{i,j}$, then for every $1\leq i\leq 4$
    $$f_i(z_i)Uk_{z_i}^{\alpha,\beta}=Uk^{\alpha,\beta}_{z_i}=UM_{f_i}^*(k^{\alpha,\beta}_{z_i})=M_{f_i}^*(Uk^{\alpha,\beta}_{z_i}).$$ 
    Therefore, $Uk^{\alpha,\beta}_{z_i}\neq 0$ is an eigenvector of $M_{f_i}^*$ as an operator on $\mathcal{M_{\alpha',\beta'}}$ with eigenvalue 
    $1$. By Lemma \ref{lem.eigenvalues.of.multiplier}, $Uk^{\alpha,\beta}_{z_i}\in \operatorname{\operatorname{span}}\left\{k^{\alpha',\beta'}_{z_i}\right\}$ (one dimensional eigenspace), i.e., $Uk^{\alpha,\beta}_{z_i}=\gamma_i k^{\alpha',\beta'}_{z_i}$. Moreover, as $U$ is unitary,
$$\|k^{\alpha,\beta}_{z_i}\|=|\gamma_i|\|k^{\alpha',\beta'}_{z_i}\|$$
    and
    $$\ip{k^{\alpha,\beta}_{z_i}}{k^{\alpha,\beta}_{z_j}}=\gamma_i\overline{\gamma_j}\ip{k^{\alpha',\beta'}_{z_i}}{k^{\alpha',\beta'}_{z_j}}.$$ Taking the absolute value on both sides, we get
    $$\left|\ip{k^{\alpha,\beta}_{z_i}}{k^{\alpha,\beta}_{z_j}}\right|=\frac{\|k^{\alpha,\beta}_{z_i}\|}{\|k^{\alpha',\beta'}_{z_i}\|}\frac{\|k^{\alpha,\beta}_{z_j}\|}{\|k^{\alpha',\beta'}_{z_j}\|}\left|\ip{k^{\alpha',\beta'}_{z_i}}{k^{\alpha',\beta'}_{z_j}}\right|.$$ Equivalently,
    \begin{equation}
        \frac{|k^{\alpha,\beta}(z_j,z_i)|^2}{k^{\alpha,\beta}(z_i,z_i)\cdot k^{\alpha,\beta}(z_j,z_j)}=\frac{|k^{\alpha',\beta'}(z_j,z_i)|^2}{k^{\alpha',\beta'}(z_i,z_i)\cdot k^{\alpha',\beta'}(z_j,z_j)}
    \end{equation}
    for every $z_i,\,z_j$, contradicting $(4)$ in Definition \ref{dfn:big}. 
        \end{proof}

    Let $\{f_1,\ldots,f_4\} \subset \cM_{\alpha,\beta}$ be the dual basis of $\{k^{\alpha,\beta}_{z_1},\ldots,k^{\alpha,\beta}_{z_4}\}$ (i.e., $f_i(z_j)=\delta_{i,j}$). We will show that $\rho_{\alpha,\beta}$ is dilation maximal for every $(\alpha,\beta)\in \mathcal{A}$.
    Our strategy is to prove that every $(\alpha,\beta)\in\mathcal{A}$, $\rho_{\alpha,\beta}$ has no non-trivial extensions and no non-trivial coextensions. This condition is known to be equivalent to dilation maximality. This approach is an adaptation of the approach taken by McCullough \cite{mccullough2001isometric}. Due to the existence of the generator $h+I \in \node/I$, for every representation, an invariant/coinvariant/reducing subspace for the image of $h+I$ is such for the image of the entire algebra. This phenomenon distinguishes this case from the case when the set of interpolation nodes contains the constraints.
    
\begin{thm}
    For every $(\alpha, \beta) \in\mathcal{A}$, $\rho_{\alpha,\beta}$ is co-extension maximal.\label{thm.no.non.trivial.coextensions}
\end{thm}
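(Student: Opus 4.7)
The strategy is to show that any coextension of $\rho_{\alpha,\beta}$ must be trivial, working entirely with the generator $h + I$ of $\node/I$. Let $\psi\colon\node/I \to B(\mathcal{K})$ be a coextension with $\mathcal{K} = \mathcal{M}_{\alpha,\beta}\oplus\mathcal{N}$. By Lemma~\ref{lem:diagonalizable}, $\psi$ is determined by $S := \psi(h+I)$, which is diagonalizable with spectrum in $\{z_1,\ldots,z_4\}$. Coinvariance of $\mathcal{M}_{\alpha,\beta}$ puts $S$ in block lower-triangular form $S = \begin{pmatrix} T & 0 \\ X & Y \end{pmatrix}$, where $T = \rho_{\alpha,\beta}(h+I)$ and $Y$ is diagonalizable with spectrum in $\{z_1,\ldots,z_4\}$; the task therefore reduces to showing $X = 0$. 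Diagonalizability of $S$ already imposes a compatibility constraint: if $f_j\in\mathcal{M}_{\alpha,\beta}$ is the eigenvector of $T$ at $z_j$, then $X f_j$ must have no component in $\ker(Y-z_jI)$, for otherwise $S$ would exhibit a nontrivial Jordan block.

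I would then restrict to the minimal coextension and use the Blaschke products $B_1 := B_{z_1,z_2,\omega}$ and $B_2 := B_{z_3,z_4,\zeta}$, which lie in $\node$ by Lemmas~\ref{lem:Blaschke.diff.points} and~\ref{lem:existence.of.point}. Since each $B_k$ is inner, $\|B_k + I\|_{\node/I} \leq 1$, and complete contractivity of $\psi$ gives $\|B_k(S)\| \leq 1$. Evaluating $B_k(S) = \begin{pmatrix} B_k(T) & 0 \\ \ast & B_k(Y) \end{pmatrix}$ block by block and pairing with the eigenvectors $k^{\alpha,\beta}_{z_i}$ of $T^*$ (using that $B_k$ vanishes at three of the four nodes and thus annihilates the corresponding three kernels) yields explicit linear relations on the cross-term $X$, expressed in terms of the Szeg\H{o} kernels $k_{z_\ell}$, the $k^{\alpha,\beta}_{z_j}$, and the projected kernels $P_{\mathcal{M}_{\alpha,\beta}}k^{\alpha,\beta}_{\omega}$ and $P_{\mathcal{M}_{\alpha,\beta}}k^{\alpha,\beta}_{\zeta}$.

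To close the argument, I would invoke condition (5) of Definition~\ref{dfn:big}, which is precisely the non-degeneracy statement that the relevant Szeg\H{o} kernels $k_{z_\ell}$ (and the dual basis vectors $f_\ell$) do not lie in the three-dimensional subspaces spanned by the vectors above. This incompatibility of the linear constraints forces $X = 0$, so $\mathcal{M}_{\alpha,\beta}$ reduces $\psi(\node/I)$ and the coextension is trivial. The hardest part will be the middle step: carefully translating the contractivity of $B_k(S)$ into linear relations matched exactly against condition (5). In particular, the argument must genuinely use the operator space structure of $\node/I$ coming from the distance formula (not merely the algebraic isomorphism $\node/I \cong \C^4$), since the abstract algebra $\C^4$ admits many more dilation-maximal representations than the operator algebra $\node/I$ does; it is precisely the presence of the inner functions $B_1,B_2\in\node$, combined with the geometric non-degeneracy of good points, that rules out the non-trivial coextensions.
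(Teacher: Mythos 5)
Your overall strategy---pass to the corner of $\psi(h+I)$, apply the functional calculus to inner functions belonging to $\node$, use the containment of the defect range in the span of kernels at the zeroes (Lemma \ref{rmrk:0}), and finish with the non-degeneracy condition (5) of Definition \ref{dfn:big}---is the mechanism the paper uses. But as written there is a genuine gap: you propose to use only the cubic Blaschke products $B_1=B_{z_1,z_2,\omega}$ and $B_2=B_{z_3,z_4,\zeta}$ together with condition (5), and this is not enough. For a one-dimensional corner with eigenvalue $z_\ell$, only an inner function of $\node$ vanishing at $z_\ell$ yields, after killing the corner, the clean inequality $1-P_{\cM_{\alpha,\beta}}M_\varphi M_\varphi^*|_{\cM_{\alpha,\beta}}\geq c\,ff^*$; for $\ell\in\{1,2\}$ this rules out $B_2$, so the only information you get is $f\in\operatorname{span}\{k^{\alpha,\beta}_{z_1},k^{\alpha,\beta}_{z_2},P_{\cM_{\alpha,\beta}}k^{\alpha,\beta}_{\omega}\}$ (note also that $B_1$ vanishes at \emph{two} of the four nodes plus the auxiliary point $\omega$, not at three nodes as you assert). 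Combined with the eigenstructure constraint $f\in\operatorname{span}\{k^{\alpha,\beta}_{z_i}\mid i\neq\ell\}$, this still permits, e.g.\ for $\ell=1$, any $f\in\C k^{\alpha,\beta}_{z_2}$, and condition (5) cannot exclude it, since trivially $k^{\alpha,\beta}_{z_2}\in\operatorname{span}\{k^{\alpha,\beta}_{z_1},k^{\alpha,\beta}_{z_2},P_{\cM_{\alpha,\beta}}k^{\alpha,\beta}_{\omega}\}$. The missing ingredient is condition (1): since $B_\lambda(z_\ell)=B_\lambda(z_{\tau(\ell)})$ with $\tau=(14)(23)$, the degree-two Blaschke product $B_{z_\ell,z_{\tau(\ell)}}$ lies in $\node$ (Lemma \ref{lem:Blaschke.diff.points}), vanishes at $z_\ell$, and forces $f\in\operatorname{span}\{k^{\alpha,\beta}_{z_\ell},k^{\alpha,\beta}_{z_{\tau(\ell)}}\}$, hence $f\in\C k^{\alpha,\beta}_{z_{\tau(\ell)}}$. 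Only after this localization does the cubic product plus condition (5) (its $k_{z_\ell}$ half; the dual-basis half is what the extension case uses) give $f=0$, which is exactly the chain in Propositions \ref{prop.f=0} and \ref{prop.no-non-trivial.coextensions}.

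Two smaller points. The passage from a general coextension on $\cM_{\alpha,\beta}\oplus\cH$ (possibly infinite-dimensional) to the rank-one situation needs to be made precise: ``restrict to the minimal coextension'' should be replaced by the argument the paper uses, namely $m(h)\in I$ for $m(z)=\prod_{i=1}^4(z-z_i)$ gives $m(G)=0$ for the lower-right block $G$, hence the non-orthogonal decomposition $\cH=\sum_i\ker(G-z_iI)$, and if the off-diagonal block is nonzero one restricts $\psi(h+I)^*$ to the invariant subspace $\cM_{\alpha,\beta}\oplus\C v$ for an eigenvector $v$ of $G$ on which the block does not vanish. Finally, Lemma \ref{lem:diagonalizable} is stated for finite-dimensional representations; what you actually need---that $\psi(f+I)=p_f(\psi(h+I))$ for the interpolating polynomial and that $m(\psi(h+I))=0$---holds for an arbitrary representation because $\node/I$ is spanned by powers of $h+I$, but this should be argued rather than cited.
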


Fix the permutation $\tau=(14)(23)\in S_4$. We begin by proving several preliminary results.

\begin{lem}\label{rmrk:0}
    Let $\varphi \in \node$ be a Blaschke product with zeroes $x_1,\ldots,x_n \subset \D \setminus \{0,\lambda\}$. Set $$K_{\varphi} = \operatorname{span}\left\{k^{\alpha,\beta}_{x_1},\ldots,k^{\alpha,\beta}_{x_n}\right\}.$$ Then, $M_{\varphi}$ is an isometry on $H^2_{\alpha,\beta}$ and 
    \begin{equation}
        \operatorname{Im}(P_{\mathcal{M_{\alpha,\beta}}}(1 - M_{\varphi} M_{\varphi^*})|_{\mathcal{M_{\alpha,\beta}}})\subseteq P_{\mathcal{M_{\alpha,\beta}}}K_{\varphi}. \label{eq:8}
    \end{equation}
\end{lem}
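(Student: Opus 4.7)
The plan is to handle the two assertions separately. For the isometry claim, I would note that a Blaschke product is inner, so $|\varphi(e^{i\theta})|=1$ almost everywhere on $\T$ and hence $M_\varphi$ is an isometry on $H^2$. Since $\varphi\in\node$, Lemma \ref{lem.eigenvalues.of.multiplier} guarantees $\varphi\in\operatorname{Mult}(H^2_{\alpha,\beta})$, so $M_\varphi$ preserves $H^2_{\alpha,\beta}$. Because the inner product on $H^2_{\alpha,\beta}$ is inherited from $H^2$, the restriction remains isometric.

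For the image containment, I would argue by duality inside the finite-dimensional space $\mathcal{M}_{\alpha,\beta}$. It suffices to show that for every $f\in\mathcal{M}_{\alpha,\beta}$ and every $g\in\mathcal{M}_{\alpha,\beta}$ perpendicular to $P_{\mathcal{M}_{\alpha,\beta}}K_\varphi$, one has $\langle(I-M_\varphi M_\varphi^*)f,g\rangle=0$. Unpacking the orthogonality condition via the reproducing property from Lemma \ref{prop.reprod.kernel} converts the hypothesis on $g$ to the pointwise conditions $g(x_1)=\cdots=g(x_n)=0$.

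The heart of the argument is then to show that any such $g\in H^2_{\alpha,\beta}$ with $g(x_j)=0$ for all $j$ actually belongs to $\varphi H^2_{\alpha,\beta}$. Since $g\in H^2$ vanishes at the zeroes of the Blaschke product $\varphi$, the standard $H^2$ factorization yields $g=\varphi h$ with $h\in H^2$. The key step is to promote $h$ into $H^2_{\alpha,\beta}$: since $\varphi\in\node$, we have $d:=\varphi(0)=\varphi(\lambda)$, and because the zeroes of $\varphi$ avoid $\{0,\lambda\}$, we have $d\neq 0$. Writing the defining conditions of $H^2_{\alpha,\beta}$ for $g=\varphi h$ gives $d\cdot h(0)=c\alpha$ and $d\cdot h(\lambda)=c(\alpha+f_\lambda(\lambda)\beta)$ for some $c\in\C$; dividing through by $d$ shows $h\in H^2_{\alpha,\beta}$, so $g\in M_\varphi H^2_{\alpha,\beta}$.

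With $g=M_\varphi h$ in hand, the inner product becomes
\[
\langle(I-M_\varphi M_\varphi^*)f,M_\varphi h\rangle=\langle(M_\varphi^*-M_\varphi^*M_\varphi M_\varphi^*)f,h\rangle,
\]
which vanishes because $M_\varphi^*M_\varphi=I$ on $H^2_{\alpha,\beta}$ by part one. The main obstacle is the promotion step $h\in H^2\Rightarrow h\in H^2_{\alpha,\beta}$, and that is exactly the place where the hypothesis $x_1,\ldots,x_n\notin\{0,\lambda\}$ is indispensable; if any $x_j$ were $0$ or $\lambda$, then $d=0$ and the division would fail.
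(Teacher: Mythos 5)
Your argument is correct and follows essentially the same route as the paper: both hinge on the factorization $g=\varphi h$ for $g$ vanishing at the zeroes of $\varphi$ and on promoting $h$ into $H^2_{\alpha,\beta}$ using $\varphi(0)=\varphi(\lambda)=d\neq 0$. Your duality computation $\langle(I-M_\varphi M_\varphi^*)f,M_\varphi h\rangle=0$ is just a rephrasing of the paper's observation that $1-M_\varphi M_\varphi^*$ is the projection of $H^2_{\alpha,\beta}$ onto $K_\varphi=H^2_{\alpha,\beta}\ominus\varphi H^2_{\alpha,\beta}$.
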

\begin{proof}
     Since $M_{\varphi}$ is an isometry on $H^2$ and $H^2_{\alpha,\beta}$ is invariant, $M_{\varphi}$ is an isometry on $H^2_{\alpha,\beta}$. Moreover, clearly $K_{\varphi} = \operatorname{span}\left\{k^{\alpha,\beta}_{x_1},\ldots,k^{\alpha,\beta}_{x_n}\right\} \subset H^2_{\alpha,\beta} \ominus (\varphi H^2_{\alpha,\beta})$. On the other hand, if $g \in H^2_{\alpha,\beta} \ominus K_{\varphi}$, then $g$ vanishes on the zeroes of $\varphi$. In particular, there exists $h \in H^2$, such that $\varphi h = g$. By our assumption, there exists $c,d \in \C$, such that $d \neq 0$, $\varphi(0) = \varphi(\lambda) = d$, $g(0) = c \alpha$, and $g(\lambda) = c(\alpha + f_{\lambda}(\lambda)\beta)$. Hence, $h \in H^2_{\alpha,\beta}$. Conclude that $K_{\varphi} = H^2_{\alpha,\beta} \ominus (\varphi H^2_{\alpha,\beta})$. Let $P_{\varphi} = 1 - M_{\varphi} M_{\varphi}^* \in B(H^2_{\alpha,\beta})$ be the projection onto $K_{\varphi}$. Hence,
     \[
     \operatorname{Im}(P_{\mathcal{M_{\alpha,\beta}}}P_{\varphi}|_{\mathcal{M_{\alpha,\beta}}})\subseteq P_{\mathcal{M_{\alpha,\beta}}}K_{\varphi}.
     \]
\end{proof}

\begin{prop}
     For every pair $(\alpha,\beta)\in\mathcal{A}$, if
    $f\in \operatorname{span}\left\{k^{\alpha,\beta}_{z_\ell}\right\}\subset \mathcal{M_{\alpha,\beta}}$ satisfies
    \begin{equation} P_{\mathcal{M_{\alpha,\beta}}}(1-M_{\varphi}M_{\varphi}^*)|_{\mathcal{M_{\alpha,\beta}}}\geq ff^*,\label{eq:f.in.image}\end{equation}
        for every Blaschke product $\varphi \in \node$ vanishing on $z_{\tau(\ell)}$, then $f=0$.
        \label{prop.f=0}
    \end{prop}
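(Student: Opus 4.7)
The plan is to fix $(\alpha,\beta) \in \mathcal{A}$ and $\ell \in \{1,2,3,4\}$, write the scalar multiple $f = c\, k^{\alpha,\beta}_{z_\ell}$ for some $c \in \mathbb{C}$, and then show $c = 0$ by testing the hypothesis \eqref{eq:f.in.image} against a single carefully chosen Blaschke product $\varphi \in \node$ vanishing at $z_{\tau(\ell)}$. Condition (5) of Definition \ref{dfn:big} is set up so that a one-shot argument against a three-zero Blaschke product forces $f$ to vanish.

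Concretely, for $\ell \in \{1,2\}$ (so $z_{\tau(\ell)} \in \{z_3, z_4\}$) I would take $\varphi = B_{z_3, z_4, \zeta}$, which lies in $\node$ by Lemma \ref{lem:existence.of.point}. For $\ell \in \{3,4\}$ (so $z_{\tau(\ell)} \in \{z_1, z_2\}$) I would take $\varphi = B_{z_1, z_2, \omega}$, which lies in $\node$ by condition (2) of Definition \ref{dfn:big}. Each choice is an inner function in $\node$ vanishing at the required $z_{\tau(\ell)}$, so the hypothesis \eqref{eq:f.in.image} applies.

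Next, by Douglas' factorization lemma applied to the positive-operator inequality \eqref{eq:f.in.image}, the vector $f$ lies in the range of $\bigl(P_{\mathcal{M}_{\alpha,\beta}}(1 - M_{\varphi}M_{\varphi}^*)|_{\mathcal{M}_{\alpha,\beta}}\bigr)^{1/2}$, which in the finite-dimensional space $\mathcal{M}_{\alpha,\beta}$ agrees with the range of the operator itself. Lemma \ref{rmrk:0} then places this range inside $P_{\mathcal{M}_{\alpha,\beta}} K_{\varphi}$. Since $z_3, z_4$ (resp. $z_1, z_2$) are interpolation nodes, $k^{\alpha,\beta}_{z_3}, k^{\alpha,\beta}_{z_4} \in \mathcal{M}_{\alpha,\beta}$ (resp. $k^{\alpha,\beta}_{z_1}, k^{\alpha,\beta}_{z_2} \in \mathcal{M}_{\alpha,\beta}$), so $P_{\mathcal{M}_{\alpha,\beta}}$ acts trivially on them, and the containment reads
\[
f \in \operatorname{span}\{k^{\alpha,\beta}_{z_3}, k^{\alpha,\beta}_{z_4}, P_{\mathcal{M}_{\alpha,\beta}} k^{\alpha,\beta}_\zeta\} \quad\text{or}\quad f \in \operatorname{span}\{k^{\alpha,\beta}_{z_1}, k^{\alpha,\beta}_{z_2}, P_{\mathcal{M}_{\alpha,\beta}} k^{\alpha,\beta}_\omega\},
\]
according to the case. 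If $c \neq 0$ then $k^{\alpha,\beta}_{z_\ell}$ belongs to the corresponding span, directly contradicting condition (5) of Definition \ref{dfn:big}. Hence $c = 0$ and $f = 0$.

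There is no real obstacle here once Lemma \ref{rmrk:0} is in hand; the main content of the argument is the clean matching between the combinatorics of the permutation $\tau$, the three-zero Blaschke products produced by Lemma \ref{lem:existence.of.point} and condition (2), and the linear-independence requirement in condition (5). In a sense, condition (5) is precisely the non-degeneracy needed for this single-$\varphi$ argument to succeed; the hypothesis quantifying over \emph{all} Blaschke products vanishing at $z_{\tau(\ell)}$ is much stronger than what the proof actually uses.
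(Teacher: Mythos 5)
Your proposal is correct and follows essentially the same route as the paper: the same choice of the three-zero Blaschke products $B_{z_3,z_4,\zeta}$ (for $\ell\in\{1,2\}$) and $B_{z_1,z_2,\omega}$ (for $\ell\in\{3,4\}$), the same use of Lemma \ref{rmrk:0} to localize the range, and the same contradiction with condition (5) of Definition \ref{dfn:big}. The only difference is that you spell out the range-inclusion step via Douglas' factorization, which the paper leaves implicit.
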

    \begin{proof}
        Assume that
        $$\varphi(z)=\begin{cases}
            B_{z_3,z_4,\zeta}&\ell\in\left\{1,2\right\}
            \\
            B_{z_1,z_2,\omega}&\ell\in\left\{3,4\right\}
        \end{cases}$$
        with $\omega$ such that $B_{z_1,z_2,\omega} \in \node$, and $\zeta$ guaranteed from Lemma \ref{lem:existence.of.point}.
        By our assumptions, $\varphi\in\node$.
        Assume $\ell\in\left\{1,2\right\}$. In that case, $\tau(\ell)\in\{3,4\}$.
        By Lemma \ref{rmrk:0},
        $$Im(1-P_{\mathcal{M_{\alpha,\beta}}}M_{\varphi}M_{\varphi}^*|_{\mathcal{M_{\alpha,\beta}}})\subset \operatorname{span}\left\{k^{\alpha,\beta}_{z_3},\,k^{\alpha,\beta}_{z_4},\,P_{\mathcal{M_{\alpha,\beta}}}k^{\alpha,\beta}_{\zeta}\right\}.$$
        
        By $\eqref{eq:f.in.image}$, $f\in \operatorname{span}\left\{k^{\alpha,\beta}_{z_3},\,k^{\alpha,\beta}_{z_4},\,P_{\mathcal{M_{\alpha,\beta}}}k^{\alpha,\beta}_{\zeta}\right\}$. Hence, by the assumption on $f$, either $f=0$ or
        \[k^{\alpha,\beta}_{z_\ell}\in \operatorname{span}\left\{k^{\alpha,\beta}_{z_3},\,k^{\alpha,\beta}_{z_4},\,P_{\mathcal{M_{\alpha,\beta}}}k^{\alpha,\beta}_{\zeta}\right\},\]
      which contradicts (5) in Definition \ref{dfn:big}. Hence,  $f=0$ for all $(\alpha,\beta)\in\mathcal{A}$. The case of $\ell\in\{3,4\}$ is similar.
\end{proof}

\begin{lem}
    Let $\omega\in\mathbb{D}$, $\varphi$ analytic in $\mathbb{D}$ and define
    $$T=\begin{pmatrix}
    C&0
    \\
    (C-\omega I)v&\omega
    \end{pmatrix}$$
    for some matrix $C \in M_n$ with spectrum contained in $\mathbb{D}$ and a vector $v \in \C^n$. Then
    $$\varphi(T)=\begin{pmatrix}
        \varphi(C)&0
        \\
        v^*(\varphi(C)-\varphi(w)I)&\varphi(w)
    \end{pmatrix}.$$
    \label{lem.matrix.powers}
\end{lem}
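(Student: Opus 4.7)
The plan is to establish the formula for polynomial $\varphi$ by a direct induction, and then pass from polynomials to analytic $\varphi$ using the holomorphic functional calculus, which is available because the spectrum of $T$ is the union of the spectrum of $C$ and $\{\omega\}$, both of which lie in $\D$ by hypothesis.

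Concretely, I first claim that for every integer $k\geq 0$,
\[
T^k=\begin{pmatrix}C^k&0\\ v^*(C^k-\omega^k I)&\omega^k\end{pmatrix}.
\]
The base case $k=0$ gives the identity (interpreting $C^0 - \omega^0 I$ as $0$), and $k=1$ recovers the definition of $T$. For the induction step, multiplying out
\[
T\cdot T^k=\begin{pmatrix}C&0\\ v^*(C-\omega I)&\omega\end{pmatrix}\begin{pmatrix}C^k&0\\ v^*(C^k-\omega^k I)&\omega^k\end{pmatrix},
\]
the two diagonal blocks are immediate, the top-right block vanishes, and the bottom-left block is
\[
v^*(C-\omega I)C^k+\omega\,v^*(C^k-\omega^k I)=v^*\bigl(C^{k+1}-\omega C^k+\omega C^k-\omega^{k+1} I\bigr)=v^*(C^{k+1}-\omega^{k+1}I),
\]
which closes the induction.

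Taking a finite $\C$-linear combination of these identities, the formula holds with $\varphi$ replaced by any polynomial $p$: namely, $p(T)$ has diagonal blocks $p(C)$ and $p(\omega)$, zero top-right block, and bottom-left block $v^*(p(C)-p(\omega)I)$. Since the spectrum of $T$ is contained in $\sigma(C)\cup\{\omega\}\subset\D$, we may choose $r<1$ so that $\sigma(T)$ lies in the disc of radius $r$, and the Taylor polynomials of $\varphi$ converge to $\varphi$ uniformly on that disc; by the holomorphic functional calculus, the corresponding operators converge in norm to $\varphi(T)$, $\varphi(C)$, and the scalar $\varphi(\omega)$. Passing to the limit block-by-block in the polynomial formula yields
\[
\varphi(T)=\begin{pmatrix}\varphi(C)&0\\ v^*(\varphi(C)-\varphi(\omega)I)&\varphi(\omega)\end{pmatrix},
\]
as claimed. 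No real obstacle arises; the only thing to keep track of is that the spectral condition on $C$ and $\omega$ is what licenses applying the analytic functional calculus to push the polynomial identity through to all $\varphi$ holomorphic on $\D$.
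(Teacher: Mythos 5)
Your argument is correct, but it takes a different route from the paper. You prove the power formula $T^k=\begin{pmatrix} C^k & 0\\ v^*(C^k-\omega^k I) & \omega^k\end{pmatrix}$ by induction, extend to polynomials by linearity, and then pass to general $\varphi$ by approximating with Taylor polynomials inside the holomorphic functional calculus; the only point to be careful about is that the functional-calculus limit requires uniform convergence on a contour enclosing $\sigma(T)$, which indeed holds because the Taylor polynomials of $\varphi$ converge locally uniformly on $\D$ and $\sigma(T)=\sigma(C)\cup\{\omega\}$ is a compact subset of $\D$. The paper instead conjugates by the triangular matrix $S=\begin{pmatrix} I_n & 0\\ v^* & 1\end{pmatrix}$, observing that $T$ is similar via $S$ to $\begin{pmatrix} C & 0\\ 0 & \omega\end{pmatrix}$, so that $\varphi(T)=S\begin{pmatrix}\varphi(C) & 0\\ 0 & \varphi(\omega)\end{pmatrix}S^{-1}$ and the stated block form drops out with no approximation argument at all; this is shorter, while your proof is more self-contained (it exhibits the explicit formula for every power of $T$, which can be useful in its own right) at the cost of the limiting step. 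Note also that, like the paper's proof, you have implicitly read the lower-left block of $T$ as the row vector $v^*(C-\omega I)$ rather than the literal $(C-\omega I)v$ printed in the statement, which is the only interpretation consistent with the block sizes and with the conclusion.
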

\begin{proof}
    Set
    \[
    S = \begin{pmatrix} I_n & 0 \\ v^* & 1 \end{pmatrix} \in M_{n+1}.
    \]
    It is not hard to check that 
    \[
    S T S^{-1} = \begin{pmatrix} C & 0 \\ 0 & \omega \end{pmatrix}.
    \]
    Since both the spectrum of $C$ and $\omega$ lie in $\D$, the claim follows from functional calculus. 
\end{proof}
\begin{prop}

    Let $(\alpha,\beta)\in\mathcal{A}$ and let $\psi_{\alpha,\beta} \colon \node/I \to B(\cM_{\alpha,\beta} \oplus \C)$ be a coextension of $\rho_{\alpha,\beta}$. Write $$\psi_{\alpha,\beta}(h+I)^*=\begin{pmatrix}
    \rho_{\alpha,\beta}(h+I)^* &v\\0&z_\ell
\end{pmatrix}.$$ Then, $v=0$.\label{prop.no-non-trivial.coextensions}
\end{prop}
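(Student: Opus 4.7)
The plan is to combine an algebraic constraint on $v$, coming from the fact that $\psi_{\alpha,\beta}$ is a homomorphism of operator algebras, with an analytic constraint arising from its complete contractivity. For the algebraic step, I would first observe that since $\psi_{\alpha,\beta}$ is a representation of $\node/I$, the argument of Lemma \ref{lem:diagonalizable} shows that $\psi_{\alpha,\beta}(h+I)$ is diagonalizable with spectrum in $\{z_1,\ldots,z_4\}$. Analyzing the eigenspaces of the block-upper-triangular form of $\psi_{\alpha,\beta}(h+I)^*$, the Jordan block at $z_\ell$ has size one precisely when $v$ lies in the range of $\rho_{\alpha,\beta}(h+I)^* - z_\ell I_{\cM_{\alpha,\beta}}$, which by the spectral description of $\rho_{\alpha,\beta}(h+I)^*$ in the $\{k^{\alpha,\beta}_{z_j}\}$ basis equals $\operatorname{span}\{k^{\alpha,\beta}_{z_j} : j \neq \ell\}$.

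For the analytic step, I would take $\varphi \in \node$ to be the Blaschke product used in Proposition \ref{prop.f=0} (namely $B_{z_3,z_4,\zeta}$ when $\ell\in\{1,2\}$ and $B_{z_1,z_2,\omega}$ when $\ell\in\{3,4\}$), so that $\varphi(z_{\tau(\ell)})=0$ and $\|\varphi\|_\infty=1$. Applying Lemma \ref{lem.matrix.powers} to $\psi_{\alpha,\beta}(h+I)$ gives
\[
\psi_{\alpha,\beta}(\varphi+I) = \begin{pmatrix} \rho_{\alpha,\beta}(\varphi+I) & 0 \\ y^* & \varphi(z_\ell)\end{pmatrix},
\]
with $y = g(\rho_{\alpha,\beta}(h+I))^* v$ and $g(z) = (\varphi(z)-\varphi(z_\ell))/(z-z_\ell)$ the (holomorphic) difference quotient. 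Complete contractivity of $\psi_{\alpha,\beta}$ then forces $\|\psi_{\alpha,\beta}(\varphi+I)\|\leq 1$; a Schur complement on the positivity $I - \psi_{\alpha,\beta}(\varphi+I)\psi_{\alpha,\beta}(\varphi+I)^*\geq 0$ (using that the $(2,2)$ entry $1-\|y\|^2-|\varphi(z_\ell)|^2$ is strictly positive) converts this, after using the $M_\varphi^*$-invariance of $\cM_{\alpha,\beta}$ to identify $I-\rho_{\alpha,\beta}(\varphi+I)\rho_{\alpha,\beta}(\varphi+I)^*$ with $P_{\cM_{\alpha,\beta}}(I - M_\varphi M_\varphi^*)|_{\cM_{\alpha,\beta}}$, into
\[
P_{\cM_{\alpha,\beta}}(I - M_\varphi M_\varphi^*)|_{\cM_{\alpha,\beta}} \;\geq\; \tfrac{1}{1-\|y\|^2-|\varphi(z_\ell)|^2}\,\rho_{\alpha,\beta}(\varphi+I)\,y y^*\,\rho_{\alpha,\beta}(\varphi+I)^*.
\]

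The conclusion follows as soon as the vector $f := (1-\|y\|^2-|\varphi(z_\ell)|^2)^{-1/2}\rho_{\alpha,\beta}(\varphi+I)y$ can be placed in the one-dimensional subspace $\operatorname{span}\{k^{\alpha,\beta}_{z_\ell}\}$ required by Proposition \ref{prop.f=0}: the proposition then gives $f=0$, and since $g(z_j)\neq 0$ for $j\neq\ell$ the operator $g(\rho_{\alpha,\beta}(h+I))^*$ is invertible on $\operatorname{span}\{k^{\alpha,\beta}_{z_j}:j\neq\ell\}$, so $v=0$. I expect the main obstacle to be exactly this final matching step: the Schur-complement vector naturally sits in the two-dimensional subspace $\operatorname{span}\{f_i:\varphi(z_i)\neq 0\}\subset\cM_{\alpha,\beta}$ rather than in the desired one-dimensional one, and reducing to $\operatorname{span}\{k^{\alpha,\beta}_{z_\ell}\}$ should proceed by decomposing $v$ in the basis $\{k^{\alpha,\beta}_{z_j}\}_{j\neq\ell}$ and applying the inequality one component at a time, leveraging condition (5) of Definition \ref{dfn:big} (which rules out both $k^{\alpha,\beta}_{z_\ell}$ and $f_\ell$ from the relevant image span).
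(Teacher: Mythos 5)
Your algebraic step is sound and agrees with the paper's: diagonalizability of $\psi_{\alpha,\beta}(h+I)$ forces $v\in\operatorname{Im}\bigl(\rho_{\alpha,\beta}(h+I)^*-z_\ell I\bigr)=\operatorname{span}\{k^{\alpha,\beta}_{z_j}\mid j\neq\ell\}$. The genuine gap is in the analytic step, and it is exactly the obstacle you flag at the end; the proposed repair does not overcome it. Because you pick $\varphi$ with $\varphi(z_{\tau(\ell)})=0$ but $\varphi(z_\ell)\neq 0$ and take a Schur complement of $I-\psi_{\alpha,\beta}(\varphi+I)\psi_{\alpha,\beta}(\varphi+I)^*$, the rank-one term on the right-hand side involves the $\varphi$-dependent vector $w_\varphi:=\rho_{\alpha,\beta}(\varphi+I)\,g(\rho_{\alpha,\beta}(h+I))^*v$ rather than a vector fixed once and for all. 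Since $\rho_{\alpha,\beta}(\varphi+I)$ acts diagonally on the dual basis $\{f_i\}$ with eigenvalues $\varphi(z_i)$, one has $w_\varphi\in\operatorname{span}\{f_i\mid\varphi(z_i)\neq 0\}$, a two-dimensional subspace (say $\operatorname{span}\{f_1,f_2\}$ for $\ell\in\{1,2\}$), while Lemma \ref{rmrk:0} only places $w_\varphi$ in $\operatorname{span}\{k^{\alpha,\beta}_{z_3},k^{\alpha,\beta}_{z_4},P_{\cM_{\alpha,\beta}}k^{\alpha,\beta}_{\zeta}\}$. Inside the four-dimensional space $\cM_{\alpha,\beta}$ a two-dimensional and an (at most) three-dimensional subspace always intersect nontrivially when the latter has dimension three, and condition $(5)$ of Definition \ref{dfn:big} says nothing about $f_1,f_2$ versus that span (it only excludes $k^{\alpha,\beta}_{z_1},k^{\alpha,\beta}_{z_2}$ and $f_3,f_4$), so you cannot conclude $w_\varphi=0$, let alone $v=0$. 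The suggested fix, decomposing $v$ in the basis $\{k^{\alpha,\beta}_{z_j}\}_{j\neq\ell}$ and "applying the inequality one component at a time," is not a legitimate operation: from $A\geq ww^*$ with $w=w_1+w_2$ one obtains no inequality for $w_1w_1^*$ or $w_2w_2^*$ separately. Two further soft spots: the invertibility of $g(\rho_{\alpha,\beta}(h+I))^*$ on $\operatorname{span}\{k^{\alpha,\beta}_{z_j}\mid j\neq\ell\}$ requires $\varphi(z_j)\neq\varphi(z_\ell)$ for the remaining index $j$, which the good-points conditions do not guarantee, and the strict positivity of the $(2,2)$ entry in your Schur complement needs an argument (if it vanishes, positivity only kills the off-diagonal, and $\rho_{\alpha,\beta}(\varphi+I)$ is not injective).

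The paper avoids all of this precisely by keeping the right-hand vector independent of $\varphi$: write $v=(\rho_{\alpha,\beta}(h+I)^*-z_\ell I)f$ with $f\in\operatorname{span}\{k^{\alpha,\beta}_{z_i}\mid i\neq\ell\}$ and use only Blaschke products $\varphi\in\node$ with $\varphi(z_\ell)=0$. Then the off-diagonal block of $\psi_{\alpha,\beta}(\varphi+I)^*$ is $\varphi(\rho_{\alpha,\beta}(h+I))^*f$, and contractivity gives $I-\varphi(\rho_{\alpha,\beta}(h+I))^*(I+ff^*)\varphi(\rho_{\alpha,\beta}(h+I))\geq 0$, which, after conjugating by the square root of $I+ff^*$, becomes
\[
I-P_{\cM_{\alpha,\beta}}M_{\varphi}M_{\varphi}^*|_{\cM_{\alpha,\beta}}\;\geq\;\tfrac{1}{1+\|f\|^2}\,ff^*
\]
with the \emph{same} $f$ for every such $\varphi$. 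Two choices then finish: $\varphi=B_{z_\ell,z_{\tau(\ell)}}$ (available by condition $(1)$) localizes $f$ to $\C k^{\alpha,\beta}_{z_{\tau(\ell)}}$, and then the degree-three product of Proposition \ref{prop.f=0} together with condition $(5)$ forces $f=0$, hence $v=0$. To salvage your route you would have to produce a similarly $\varphi$-independent rank-one lower bound, which the $\psi\psi^*$-Schur complement with $\varphi(z_\ell)\neq 0$ does not provide.
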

\begin{proof}
    By Lemma \ref{lem:diagonalizable}, $\psi_{\alpha,\beta}(h+I)$ is diagonalizable. Taking $p(z)=\prod_{i=1}^4(z-z_i)z(z-\lambda)$ and abbreviating $B_{\alpha,\beta}=\rho_{\alpha,\beta}(h + I)$, we have
    $$0=p(\psi_{\alpha,\beta}(h+I)^*)=\begin{pmatrix}
    0&\tilde{p}(B_{\alpha,\beta})v
    \\
    0&0
\end{pmatrix},$$
with $\tilde{p}(z)=\prod_{i\neq \ell}^4 (z-z_i)z(z-\lambda)$. Therefore, $v\in\ker\left(\tilde{p}(B_{\alpha,\beta}^*)\right)$. By Remark \ref{remark.minimal.polynomial}, the operator $B_{\alpha,\beta}^*$ is diagonalizable with distinct eigenvalues. Hence, there exists some $f\in \mathcal{M}_{\alpha,\beta}$, such that $v=(B_{\alpha,\beta}^*-z_\ell I)f$. Note that if $f=\sum_{i=1}^{4}\gamma_i k_{z_i}^{\alpha,\beta}$, then
$$\left(B_{\alpha,\beta}^*-z_\ell I\right)f=\left(B_{\alpha,\beta}^*-z_\ell  I\right)\underbrace{\left(\sum_{i\neq\ell}^4\gamma_i k^{\alpha,\beta}_{z_i}\right)}_g.$$
So we can assume $f\in \operatorname{span}\{k^{\alpha,\beta}_{z_i} \mid 1\leq i\neq\ell\leq 4\}$.
By Lemma \ref{lem.matrix.powers}, for every $\varphi\in \node$ analytic in a neighborhood of $\bb{D}$,
$$\varphi\left(\psi_{\alpha,\beta}(h+I)\right)^*=\begin{pmatrix}
\varphi(B_{\alpha,\beta})^*&(\varphi(B_{\alpha,\beta})^*-\overline{\varphi(z_\ell)})f\\0& \overline{\varphi(z_\ell)}
\end{pmatrix}.$$ In particular, if $\varphi(\overline{z_\ell})=\varphi(z_\ell)=0$ (as $z_\ell\in\bb{R}$), then,
$$\varphi(\psi_{\alpha,\beta}(h+I))^*=\begin{pmatrix}
\varphi(B_{\alpha,\beta})^*&\varphi(B_{\alpha,\beta})^*f\\0&0
\end{pmatrix}.$$ 
By Lemma \ref{lem:diagonalizable}, $\varphi\left(\psi_{\alpha,\beta}\left(h+I\right)\right)=\psi_{\alpha,\beta}\left(\varphi+I\right)$. Since $\|\varphi+I\|\leq 1$ ($\varphi$ is a Blaschke product) and $\psi_{\alpha,\beta}$ is an operator algebra representation, $\varphi\left(\psi_{\alpha,\beta}\left(h+I\right)\right)$ is a contraction. Thus,
$$1-\varphi\left(\psi_{\alpha,\beta}\left(h+I\right)\right)^*\varphi\left(\psi_{\alpha,\beta}\left(h+I\right)\right)\geq 0.$$
In particular,
\begin{equation}
    \begin{pmatrix}
        1-\varphi(B_{\alpha,\beta})^*(I+ff^*)\varphi(B_{\alpha,\beta})&0\\0&1
    \end{pmatrix}\geq 0.\label{equation1}
\end{equation}
Let $Q$ be the positive root of $I+ff^*$. It is immediate that $Q^{-2}=I-\frac{1}{1+\|f\|^2}ff^*$. Moreover, by $\eqref{equation1}$, $\varphi(B_{\alpha,\beta})^*Q$ is a contraction. Therefore,
$$Q^{-2}\geq \varphi(B_{\alpha,\beta})\varphi(B_{\alpha,\beta})^*\Rightarrow I-\varphi(B_{\alpha,\beta}) \varphi(B_{\alpha,\beta})^*\geq\frac{1}{1+\|f\|^2}ff^*.$$ This can be rewritten as, by definition of $B_{\alpha,\beta}^*$,
$$I-P_{\mathcal{M_{\alpha,\beta}}}M_{\varphi}M_{\varphi}^*|_{\mathcal{M_{\alpha,\beta}}}\geq\frac{1}{1+\|f\|^2}ff^*.$$

Letting 
$\varphi(z)=\frac{z-z_\ell}{1-\overline{z_\ell} z}\cdot\frac{z-z_{\tau(\ell)}}{1-\overline{z_{\tau(\ell)}}z}\in \node$,
by Lemma \ref{rmrk:0},
$$\sqrt{\frac{1}{1+\|f\|^2}}f\in \operatorname{span}\{k^{\alpha,\beta}_{z_\ell},\,k^{\alpha,\beta}_{z_{\tau(\ell)}}\}.$$ Hence, by our assumption, $f \in \C k_{z_{\tau(\ell)}}$. The second part of Proposition \ref{prop.f=0} implies $f=0$. Therefore, for every $(\alpha,\beta)\in\mathcal{A}$, $v=0$.
\end{proof}
Now, we can prove that $\rho_{\alpha,\beta}$ has no non-trivial coextensions.
\begin{proof}[Proof of Theorem \ref{thm.no.non.trivial.coextensions}]
    Fix $(\alpha,\beta)$ and assume that $\psi \colon \node/I\to B(\mathcal{M}_{\alpha,\beta}\oplus\mathcal{H})$ is a co-extension of $\rho_{\alpha,\beta}$. Write
    $$\psi(h+I)^*=\begin{pmatrix}
    \rho_{\alpha,\beta}(h+I)^*&D\\0&G
\end{pmatrix}.$$
Let $m(x)=\prod_{k=1}^4 (z-z_i)$. As $m(\psi^*(h+I))=\psi^*(m(h)+I)=0$, $m(G)=0$, that is $(G-z_1I)\cdot\ldots\cdot(G-z_4I)=0$. Therefore, there is a non-orthogonal direct sum decomposition
$$\mathcal{H}=\sum_{i=1}^4\ker(G-z_i I).$$
Assume that $D\neq 0$. Hence, there exists a vector $0\neq v\in\ker(G-z_i)$ for some $i$ such that $e=Dv\neq 0$. Consider $\mathcal{K}=\mathcal{M_{\alpha,\beta}}\oplus \operatorname{span}\{v\}\subset\mathcal{\mathcal{M_{\alpha,\beta}}}\oplus\mathcal{H}$. Then,
$$\psi(h+I)^*|_{\mathcal{K}}=\begin{pmatrix}
B_{\alpha,\beta}^*&e\\0&z_i
\end{pmatrix}.$$
By Proposition \ref{prop.no-non-trivial.coextensions}, $e=0$. Hence, $D=0$.
\end{proof}
\begin{thm}
    For every $(\alpha,\beta)\in\mathcal{A}$, $\rho_{\alpha,\beta}$ is extension maximal.\label{thm.no.non.trivial.extensions}
\end{thm}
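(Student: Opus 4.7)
The plan is to adapt the coextension proof of Theorem \ref{thm.no.non.trivial.coextensions}, working with $\psi(h+I)$ directly and exploiting the orthogonality $\langle f_i, k^{\alpha,\beta}_{z_j}\rangle = \delta_{ij}$ to play the dual basis $\{f_i\}$ against $\{k^{\alpha,\beta}_{z_i}\}$. Suppose $\psi \colon \node/I \to B(\cM_{\alpha,\beta} \oplus \cH)$ is an extension of $\rho_{\alpha,\beta}$, so that $\cM_{\alpha,\beta}$ is invariant and $\psi(h+I) = \begin{pmatrix} B_{\alpha,\beta} & D \\ 0 & G \end{pmatrix}$. Using $m(z) = \prod_i(z-z_i)$ (for which $m(h) \in I$, exactly as in the coextension argument), one gets $m(G) = 0$, so $G$ is diagonalizable with spectrum in $\{z_1,\dots,z_4\}$. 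Assume $D \neq 0$ and choose $v \in \ker(G - z_\ell I)$ with $e := Dv \neq 0$; restricting to the invariant subspace $\cK' = \cM_{\alpha,\beta} \oplus \C v$ yields $\psi(h+I)|_{\cK'} = \begin{pmatrix} B_{\alpha,\beta} & e \\ 0 & z_\ell \end{pmatrix}$. Since the restriction of a diagonalizable operator to an invariant subspace is diagonalizable, the eigenvalue $z_\ell$ must have geometric multiplicity $2$, which forces $e \in \operatorname{Im}(B_{\alpha,\beta} - z_\ell I) = \operatorname{span}\{f_i : i \neq \ell\}$; write $e = \sum_{i\neq\ell} c_i f_i$.

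Functional calculus on the upper-triangular block yields, for $\varphi \in \node$ analytic near $\overline{\D}$,
\[
\varphi(\psi(h+I)|_{\cK'}) = \begin{pmatrix} \varphi(B_{\alpha,\beta}) & W(\varphi) \\ 0 & \varphi(z_\ell) \end{pmatrix}, \qquad W(\varphi) = \sum_{i \neq \ell} c_i\, \frac{\varphi(z_i) - \varphi(z_\ell)}{z_i - z_\ell}\, f_i,
\]
where the formula uses $B_{\alpha,\beta} f_i = z_i f_i$. For $\varphi$ a Blaschke product in $\node$, $\psi(\varphi+I)$ is a contraction; a Schur-complement computation applied to $I - \psi(\varphi+I)\psi(\varphi+I)^* \geq 0$ yields
\[
\frac{W(\varphi) W(\varphi)^*}{1 - |\varphi(z_\ell)|^2} \;\leq\; I - \varphi(B_{\alpha,\beta})\varphi(B_{\alpha,\beta})^* \;=\; P_{\cM_{\alpha,\beta}}\bigl(I - M_\varphi M_\varphi^*\bigr)\bigr|_{\cM_{\alpha,\beta}},
\]
and Lemma \ref{rmrk:0} places the image on the right inside $P_{\cM_{\alpha,\beta}} K_\varphi$, the span of the reproducing kernels at the zeros of $\varphi$ in $\D \setminus \{0,\lambda\}$.

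In the first step, take $\varphi_1 = B_{z_\ell, z_{\tau(\ell)}} \in \node$ (well-defined by condition (1) of Definition \ref{dfn:big} and Lemma \ref{lem:Blaschke.diff.points}). Since $\varphi_1$ vanishes at both $z_\ell$ and $z_{\tau(\ell)}$, the formula collapses to $W(\varphi_1) = \sum_{i \neq \ell, \tau(\ell)} c_i \frac{\varphi_1(z_i)}{z_i - z_\ell} f_i$, which lies in $\operatorname{span}\{f_i : i \neq \ell, \tau(\ell)\}$, while the Schur bound forces it into $\operatorname{span}\{k^{\alpha,\beta}_{z_\ell}, k^{\alpha,\beta}_{z_{\tau(\ell)}}\}$. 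Using $\langle f_i, k^{\alpha,\beta}_{z_j}\rangle = \delta_{ij}$, these two subspaces are orthogonal complements in $\cM_{\alpha,\beta}$, so $W(\varphi_1) = 0$ and hence $c_i = 0$ for $i \neq \ell, \tau(\ell)$; this leaves $e = c_{\tau(\ell)} f_{\tau(\ell)}$.

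In the second step, take $\varphi_2 = B_{z_3, z_4, \zeta}$ when $\ell \in \{1,2\}$ and $\varphi_2 = B_{z_1, z_2, \omega}$ when $\ell \in \{3,4\}$, both of which lie in $\node$ by Definition \ref{dfn:big}(2) and Lemma \ref{lem:existence.of.point}. Now $\varphi_2(z_{\tau(\ell)}) = 0$ while $\varphi_2(z_\ell) \neq 0$, so the formula gives $W(\varphi_2) = -c_{\tau(\ell)} \frac{\varphi_2(z_\ell)}{z_{\tau(\ell)} - z_\ell} f_{\tau(\ell)}$, which the Schur bound forces into $\operatorname{span}\{k^{\alpha,\beta}_{z_3}, k^{\alpha,\beta}_{z_4}, P_{\cM_{\alpha,\beta}} k^{\alpha,\beta}_{\zeta}\}$ in the first case and the analogous span with $z_1, z_2, \omega$ in the second. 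If $c_{\tau(\ell)} \neq 0$, this places $f_{\tau(\ell)}$ inside the indicated span, directly contradicting the second half of condition (5) of Definition \ref{dfn:big} (note that $\tau(\ell) \in \{3,4\}$ or $\tau(\ell) \in \{1,2\}$, respectively). Hence $c_{\tau(\ell)} = 0$, so $e = 0$, contradicting $e = Dv \neq 0$. The main novelty compared with the coextension argument is the Schur-complement step needed to absorb the factor $\varphi_2(z_\ell) \neq 0$ in Step 2; the rest of the work is a careful translation of the coextension reasoning into the dual basis $\{f_i\}$, aided by the orthogonality relation, which makes the first-step reduction automatic.
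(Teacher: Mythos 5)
Your proof is correct and follows the paper's overall two-step strategy for Proposition \ref{prop.no-non-trivial.extensions}: restrict to the invariant subspace $\cM_{\alpha,\beta}\oplus\C v$ over an eigenvector of $G$, use diagonalizability to put the corner in $\operatorname{Im}(B_{\alpha,\beta}-z_\ell I)=\operatorname{span}\{f_i : i\neq\ell\}$, and then test contractivity against two Blaschke products in $\node$, using Lemma \ref{rmrk:0} and condition (5). Your first step is the paper's first step verbatim (the paper's $g=B_{z_\ell,z_{\tau(\ell)}}$). Your second step, however, is genuinely different: the paper tests against the product vanishing at $z_\ell$ (i.e.\ $B_{z_1,z_2,\omega}$ when $\ell\in\{1,2\}$), so the scalar diagonal entry is $0$ and the bound on the corner falls out of $I-\psi\psi^*\geq 0$ with no Schur complement, the contradiction being with the statement $f_{\tau(\ell)}\notin\operatorname{span}\{k^{\alpha,\beta}_{z_1},k^{\alpha,\beta}_{z_2},P_{\cM_{\alpha,\beta}}k^{\alpha,\beta}_{\omega}\}$, i.e.\ condition (5) with the two pairs crossed (which is also the form the example's minors $(C_\ell)_{\ell,\ell}$ actually verify); you instead test against the product vanishing at $z_{\tau(\ell)}$ ($B_{z_3,z_4,\zeta}$ when $\ell\in\{1,2\}$), so $\varphi_2(z_\ell)\neq 0$ and you need the Schur-complement absorption $\frac{W(\varphi_2)W(\varphi_2)^*}{1-|\varphi_2(z_\ell)|^2}\leq I-\varphi_2(B_{\alpha,\beta})\varphi_2(B_{\alpha,\beta})^*$, which you carry out correctly, and you then contradict condition (5) exactly as it is written in Definition \ref{dfn:big}. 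Both routes are sound; yours uses (5) verbatim at the cost of the extra Schur step, the paper's avoids that step but uses the crossed instance of (5). Two small remarks: (i) because of this mismatch (apparently an index slip in (5)), your argument and the paper's rest on different $3\times 3$ nonsingularity conditions, and the example only checks the minors needed for the paper's pairing (harmless there since $\omega=\zeta$, but feeding Theorem \ref{thm:example_good} into your version would require checking the complementary minors); (ii) you should say explicitly why $\varphi_2(z_\ell)\neq 0$ --- it follows since the $z_i$ are distinct and the first half of (5) forces $\zeta\notin\{z_1,z_2\}$ and $\omega\notin\{z_3,z_4\}$. With those remarks added, your argument is complete.
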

We start with a proposition similar to Proposition \ref{prop.no-non-trivial.coextensions}.
\begin{prop}
    Let $(\alpha,\beta)\in\mathcal{A}$ and let $\psi_{\alpha,\beta} \colon \node/I \to B(\cM_{\alpha,\beta} \oplus \C)$ be an extension of $\rho_{\alpha,\beta}$. Write $$\psi_{\alpha,\beta}(h+I)=\begin{pmatrix}
    \rho_{\alpha,\beta}(h+I) &v\\0&z_\ell
\end{pmatrix}.$$ Then, $v=0$.\label{prop.no-non-trivial.extensions}
\end{prop}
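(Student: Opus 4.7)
The plan is to mirror the proof of Proposition \ref{prop.no-non-trivial.coextensions}, working with the upper-triangular form of $\psi_{\alpha,\beta}(h+I)$ and swapping the reproducing kernels $\{k^{\alpha,\beta}_{z_i}\}$ throughout for their biorthogonal dual basis $\{f_1,\ldots,f_4\} \subset \cM_{\alpha,\beta}$. The key algebraic input is the identity $P_{\cM_{\alpha,\beta}} g = \sum_i g(z_i) f_i$ for $g \in H^2_{\alpha,\beta}$, which yields $\varphi(B_{\alpha,\beta}) f_i = \varphi(z_i) f_i$ for every $\varphi \in \node$; in particular $B_{\alpha,\beta} f_i = z_i f_i$ and so $\ker(B_{\alpha,\beta} - z_\ell I) = \C f_\ell$. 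Exactly as in Proposition \ref{prop.no-non-trivial.coextensions}, the diagonalizability of $\psi_{\alpha,\beta}(h+I)$ from Lemma \ref{lem:diagonalizable} together with $p(\psi_{\alpha,\beta}(h+I)) = 0$ for $p(z) = \prod_i (z-z_i) z(z-\lambda)$ let us write $v = (B_{\alpha,\beta} - z_\ell I)\,g$ with $g \in \operatorname{span}\{f_i : i \neq \ell\}$.

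The upper-triangular analogue of Lemma \ref{lem.matrix.powers} (obtained by diagonalizing $\psi_{\alpha,\beta}(h+I)$ via the similarity $S = \bigl(\begin{smallmatrix} I & g \\ 0 & 1 \end{smallmatrix}\bigr)$) gives, for every Blaschke product $\tilde\varphi \in \node$,
\[
T := \tilde\varphi\bigl(\psi_{\alpha,\beta}(h+I)\bigr) = \begin{pmatrix} \tilde\varphi(B_{\alpha,\beta}) & (\tilde\varphi(B_{\alpha,\beta}) - \tilde\varphi(z_\ell) I)\,g \\ 0 & \tilde\varphi(z_\ell) \end{pmatrix}.
\]
Since $T$ is a contraction and $|\tilde\varphi(z_\ell)| < 1$, the Schur complement of $I - TT^* \geq 0$ against the scalar entry $1 - |\tilde\varphi(z_\ell)|^2$ yields
\[
(1 - |\tilde\varphi(z_\ell)|^2)\bigl(I - \tilde\varphi(B_{\alpha,\beta}) \tilde\varphi(B_{\alpha,\beta})^*\bigr) \geq w w^*, \qquad w := (\tilde\varphi(B_{\alpha,\beta}) - \tilde\varphi(z_\ell) I)\,g.
\]
As in the coextension proof, $I - \tilde\varphi(B_{\alpha,\beta}) \tilde\varphi(B_{\alpha,\beta})^* = P_{\cM_{\alpha,\beta}}(I - M_{\tilde\varphi} M_{\tilde\varphi}^*)|_{\cM_{\alpha,\beta}}$, so Lemma \ref{rmrk:0} forces $w \in P_{\cM_{\alpha,\beta}} K_{\tilde\varphi}$.

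Taking $\tilde\varphi = B_{z_\ell, z_{\tau(\ell)}} \in \node$ (Lemma \ref{lem:Blaschke.diff.points}(1) and Definition \ref{dfn:big}(1)), the eigenvalue identity places $w = \tilde\varphi(B_{\alpha,\beta}) g$ in $\operatorname{span}\{f_i : i \notin \{\ell, \tau(\ell)\}\}$, while the previous paragraph forces $w \in \operatorname{span}\{k^{\alpha,\beta}_{z_\ell}, k^{\alpha,\beta}_{z_{\tau(\ell)}}\}$. Evaluating a common vector at $z_\ell$ and $z_{\tau(\ell)}$ and invoking invertibility of the $2\times 2$ Gram matrix of $\{k^{\alpha,\beta}_{z_\ell}, k^{\alpha,\beta}_{z_{\tau(\ell)}}\}$ shows the intersection is trivial, so $g(z_i) = 0$ for $i \notin \{\ell, \tau(\ell)\}$; combined with $g \in \operatorname{span}\{f_i : i \neq \ell\}$, this forces $g = c f_{\tau(\ell)}$. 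Applying the inequality once more with $\tilde\varphi = B_{z_3, z_4, \zeta}$ when $\ell \in \{1, 2\}$ (respectively $B_{z_1, z_2, \omega}$ when $\ell \in \{3, 4\}$), which lies in $\node$ by Definition \ref{dfn:big}(2) and Lemma \ref{lem:existence.of.point}, yields $\tilde\varphi(z_{\tau(\ell)}) = 0 \neq \tilde\varphi(z_\ell)$, and so $w = -c\,\tilde\varphi(z_\ell)\,f_{\tau(\ell)}$ must lie in $\operatorname{span}\{k^{\alpha,\beta}_{z_3}, k^{\alpha,\beta}_{z_4}, P_{\cM_{\alpha,\beta}} k^{\alpha,\beta}_\zeta\}$ (respectively the analogous span involving $\omega$). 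The second half of Definition \ref{dfn:big}(5) is precisely the statement that $f_{\tau(\ell)}$ does not lie in this span, so $c = 0$ and $v = 0$. The principal subtlety is arranging that the Schur complement of $TT^* \leq I$ produces $I - \tilde\varphi(B)\tilde\varphi(B)^*$ (rather than the less tractable $I - \tilde\varphi(B)^*\tilde\varphi(B)$ that would emerge from $T^*T \leq I$); this is what lets Lemma \ref{rmrk:0} apply verbatim and lets the two-step Blaschke argument run in perfect duality with the coextension proof.
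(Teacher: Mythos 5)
Your proof is correct, and its skeleton is the same as the paper's: reduce to $v=(B_{\alpha,\beta}-z_\ell I)g$ with $g\in\operatorname{span}\{f_i \mid i\neq\ell\}$, use the two-point Blaschke product $B_{z_\ell,z_{\tau(\ell)}}\in\node$ together with Lemma \ref{rmrk:0} to force $g\in\C f_{\tau(\ell)}$, and then kill the remaining coefficient with a three-point Blaschke product and condition $(5)$ of Definition \ref{dfn:big}. Where you genuinely diverge is the last step. The paper takes $\tilde\varphi=B_{z_1,z_2,\omega}$ for $\ell\in\{1,2\}$, which vanishes at $z_\ell$, so the $(2,2)$ entry of $\tilde\varphi(\psi_{\alpha,\beta}(h+I))$ is zero and $TT^*\leq 1$ yields the needed inequality directly, with conclusion $f_{\tau(\ell)}\in\operatorname{span}\{k^{\alpha,\beta}_{z_1},k^{\alpha,\beta}_{z_2},P_{\cM_{\alpha,\beta}}k^{\alpha,\beta}_{\omega}\}$. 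You instead take $\tilde\varphi=B_{z_3,z_4,\zeta}$, which does not vanish at $z_\ell$, and compensate by taking the Schur complement of $I-TT^*$ against the scalar $1-|\tilde\varphi(z_\ell)|^2$; this costs only a harmless factor and lands you in $\operatorname{span}\{k^{\alpha,\beta}_{z_3},k^{\alpha,\beta}_{z_4},P_{\cM_{\alpha,\beta}}k^{\alpha,\beta}_{\zeta}\}$, which is excluded for $f_{\tau(\ell)}$ with $\tau(\ell)\in\{3,4\}$ by the second block of Definition \ref{dfn:big}$(5)$ exactly as printed. Interestingly, the paper's own proof (and the verification of condition $(5)$ for the example of Theorem \ref{thm:example_good}, where $\omega=\zeta=\sqrt{2}-1$) uses the pairing with the two spans interchanged relative to the printed definition, so your variant is the one that literally matches Definition \ref{dfn:big}$(5)$; the two versions differ only by this apparently typographical swap, and each needs the same mild implicit genericity that the auxiliary point avoid the relevant nodes ($\zeta\notin\{z_1,z_2\}$ for you, $\omega\notin\{z_3,z_4\}$ for the paper) so that the surviving value $\tilde\varphi(z_\ell)$, respectively $\tilde\varphi(z_{\tau(\ell)})$, is nonzero. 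Your Schur-complement step and your Gram-matrix argument for $\operatorname{span}\{f_i \mid i\neq\ell,\tau(\ell)\}\cap\operatorname{span}\{k^{\alpha,\beta}_{z_\ell},k^{\alpha,\beta}_{z_{\tau(\ell)}}\}=\{0\}$ are both sound; the paper obtains the latter slightly faster from $\langle f_i,k^{\alpha,\beta}_{z_j}\rangle=0$ for $i\neq j$.
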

\begin{proof}
    By the same reasoning as in the proof of Proposition \ref{prop.no-non-trivial.coextensions}, $v=(B_{\alpha,\beta}-z_\ell I)f$ for some $f\in \mathcal{M}_{\alpha,\beta}$.
    Write $f=\sum_{i=1}^4\gamma_i f_i$ with $f_i$ being the function satisfying $f_i(z_j)=\delta_{i,j}$, and observe that
    $$(B_{\alpha,\beta}-z_\ell I)f=(B_{\alpha,\beta}-z_\ell I)\left(\sum_{i=1}^4\gamma_i f_i\right)=(B_{\alpha,\beta}-z_\ell I)(\sum_{i\neq \ell}^4\gamma_i f_i).$$
    Since $f_\ell\in\ker(B_{\alpha,\beta}-z_\ell I)$. Therefore, we can assume that $f\in\operatorname{span}\{f_i \mid 1\leq i\neq \ell\leq 4\}$.
    Let $$g(z)=\frac{z-z_{\ell}}{1-z_\ell z}\cdot\frac{z-z_{\tau(\ell)}}{1-z_{\tau(\ell)}z}.$$ By Lemma \ref{lem:diagonalizable},
    $$\sigma_{\alpha,\beta}(g+I)=g(\sigma_{\alpha,\beta}(h+I))=\begin{pmatrix}
    g(B_{\alpha,\beta})&g(B_{\alpha,\beta})f\\0&0
\end{pmatrix}.$$
As $\sigma_{\alpha,\beta}$ is a representation and $g$ is a Blaschke product, hence a contraction,
$$1\geq\begin{pmatrix}
g(B_{\alpha,\beta})g(B_{\alpha,\beta})^*+g(B_{\alpha,\beta})ff^*g(B_{\alpha,\beta})^*&0\\0&1
\end{pmatrix}.$$
In particular,
$$1-P_{\mathcal{M_{\alpha,\beta}}}M_{g}M_{g}^*|_{\mathcal{M_{\alpha,\beta}}}\geq g(B_{\alpha,\beta})ff^*g(B_{\alpha,\beta})^*.$$
Hence, $$g(B_{\alpha,\beta})f=\sum_{i\neq\ell,\,\neq\tau(\ell)}^4\gamma_ig(z_i)f_i\in Im(1-P_{\mathcal{M_{\alpha,\beta}}}M_{g}M_{g}^*|_{\mathcal{M_{\alpha,\beta}}})\subset \operatorname{span}\{k^{\alpha,\beta}_{z_\ell},k^{\alpha,\beta}_{z_\tau(\ell)}\}.$$
Note that for $i\neq j$, $\ip{f_i}{k^{\alpha,\beta}_{z_j}}=0$. Therefore, $$\operatorname{span}\{f_i \mid 1\leq i\leq 4,\,i\neq \ell,\,\tau(\ell)\}\cap \operatorname{span}\{k^{\alpha,\beta}_{z_\ell},k^{\alpha,\beta}_{z_\tau(\ell)}\}=\{0\}.$$ Hence, $f\in \operatorname{span}\{f_{\tau(\ell)}\}$, and we can write $f=\gamma\cdot f_{\tau(\ell)}$. Now let
$$
\tilde{\varphi}(z)=\begin{cases}
        B_{z_1,z_2,\omega}&\ell\in\left\{1,2\right\},
            \\
            B_{z_3,z_4,\zeta}&\ell\in\left\{3,4\right\}.
        \end{cases}
$$
and assume again that $\ell\in\{1,2\}$.
By the same reasoning,
$$\tilde{\varphi}(A)f=\gamma\cdot \tilde{\varphi}(z_{\tau(\ell)})f_{\tau(\ell)}\in \operatorname{span}\{k^{\alpha,\beta}_{1},k^{\alpha,\beta}_{z_2}, P_{\cM_{\alpha,\beta}} k^{\alpha,\beta}_{\omega}\}.$$
By our choice of $\tilde{\varphi}$, $\tilde{\varphi}(z_{\tau(\ell)})\neq 0$.  Assume that $\gamma\neq 0$. Therefore, $\tilde{\varphi}(A)f\neq 0$ and consequently
$$f_{\tau(\ell)}\in \operatorname{span}\{k^{\alpha,\beta}_{z_1},k^{\alpha,\beta}_{z_2}, P_{\cM_{\alpha,\beta}} k^{\alpha,\beta}_{\omega}\}.$$ 
contradiction Definition \ref{dfn:big}. Hence, for all $(\alpha,\beta)\in\mathcal{A}$, $f=0$ and consequently $v=0$. The case where $\ell\in\{3,4\}$ is similar.
\end{proof}

\begin{proof}[Proof of Theorem \ref{thm.no.non.trivial.extensions}]
    Assume that $\psi \colon \node/I\to B(\mathcal{M}_{\alpha,\beta}\oplus\mathcal{H})$ is an extension of $\rho_{\alpha,\beta}$. Write 
    $$\psi(h+I) =\begin{pmatrix}
        \rho_{\alpha,\beta}(h+I)&D
        \\
        0&G
    \end{pmatrix}.$$
  Let $m(x)=\prod_{k=1}^4 (z-z_i)$. As $m(\psi(h+I))=\psi(m(h)+I)=0$, $m(G)=0$, that is $(G-z_1I)\cdot\ldots\cdot(G-z_4I)=0$. Therefore, we have a non-orthogonal direct sum decomposition
    $$\mathcal{H}=\sum_{i=1}^4\ker(G-z_iI).$$
    Assume that $D\neq 0$. Hence, there exists a vector $0\neq v\in\ker(G-z_iI)$ for some $i$ such that $e=Dv\neq 0$. Consider $\mathcal{K}=\mathcal{M_{\alpha,\beta}}\oplus \operatorname{span}\{v\}\subset\mathcal{\mathcal{M_{\alpha,\beta}}}\oplus\mathcal{H}$. By assumption, $\mathcal{K}$ is invariant. Therefore,
    $$\psi(h+I)|_{\mathcal{K}}=\begin{pmatrix}
    A&e\\0&z_i
\end{pmatrix}.$$
By Proposition \ref{prop.no-non-trivial.extensions}, $e=0$. Hence, $D=0$.
\end{proof}
For completeness we include the proof that for every $(\alpha,\beta)\in\mathcal{A}$, $\rho_{\alpha,\beta}$ is dilation maximal.
\begin{proof}[Proof of Theorem \ref{thm:good_then_big_envelope}]
    Let $(\alpha,\beta)\in\mathcal{A}$. Assume that
    $$\psi \colon \node/{I}\to B(\mathcal{K})$$ is a dilation of $\rho_{\alpha,\beta}$, i.e., $\cM_{\alpha,\beta} \subset \cK$ and 
    $$\rho_{\alpha,\beta}=P_{\mathcal{M_{\alpha,\beta}}}\psi|_{\mathcal{M_{\alpha,\beta}}}$$
    Then, again by Sarason's Lemma $$\psi(h+I)=\begin{pmatrix}
    A&B&C
    \\
    0&\rho_{\alpha,\beta}(h+I)&D
    \\
    0&0&E
\end{pmatrix}.$$
As the $2\times 2$ upper-left corner is a co-extension of $\rho_{\alpha,\beta}$, by Theorem \ref{thm.no.non.trivial.coextensions}, $B=0$. Similarly, since the $2\times 2$ lower-right corner is an extension of $\rho_{\alpha,\beta}$, by Theorem \ref{thm.no.non.trivial.extensions}, $D=0$. Therefore,
$$\psi(h+I)=\begin{pmatrix}
A&0&C\\0&\rho_{\alpha,\beta}&0\\0&0&E
\end{pmatrix}.$$
That is, $\mathcal{M_{\alpha,\beta}}$ is reducing for $\phi(h+I)$ and, thus, for all of $\psi(\node/I)$, as desired.
\end{proof}

Now it remains to show that there are sets of good points for some $0 \neq \lambda \in \D \cap \R$. We will dedicate the rest of this section to an example of a good set of points. Fix $\lambda=\frac{1}{\sqrt{2}}$. We will show that for the choice of points:
$$z_1=\frac{4}{3\sqrt{2}},\,z_2=\frac{1}{2\sqrt{2}},\,z_3=\frac{\sqrt{2}}{3},\,z_4=-\frac{1}{\sqrt{2}},$$
the algebra $C^*_e(\node/I)$ is infinite-dimensional. By what we observed, it is enough to show that these points are good with respect to $\lambda$. Our goal is to prove the following theorem.
\begin{thm} \label{thm:example_good}
    $z_1,\ldots,z_4$ are good points.
\end{thm}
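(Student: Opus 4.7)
The plan is to verify each of the five conditions in Definition \ref{dfn:big} for the specific quadruple $(z_1, z_2, z_3, z_4)$ with $\lambda = 1/\sqrt{2}$, thereby producing the required infinite family $\cA \subset \T$.

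Conditions (1) and (2) reduce to explicit algebraic checks. For (1), directly compute $B_\lambda(z_i) = z_i(z_i - \lambda)/(1 - \lambda z_i)$ for each $i$; the four values were chosen so that $z_1, z_4$ are the two preimages under $B_\lambda$ of one point of $\D$, and $z_2, z_3$ of another, so the paired equalities hold by construction. For (2), the condition $B_{z_1, z_2, \omega}(0) = B_{z_1, z_2, \omega}(\lambda)$ unfolds to a single rational equation in $\omega$; solving it yields an explicit real $\omega \in \D$, after which the argument of Lemma \ref{lem:existence.of.point} produces the corresponding real $\zeta$.

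For the remaining conditions, the strategy is to take the candidate family in the form $(\alpha, \beta) = (\cos t, \sin t)$ with $t$ ranging over a cofinite subset of $[0, 2\pi)$. Each kernel entry $k^{\alpha, \beta}(z_i, z_j)$ is a real-analytic (in fact rational) function of $t$, so the bad locus for each of (3), (4), (5) is a finite subset of $\T$ that can be excised. Concretely, for (3) one identifies a single column index $i_0$ for which the entries $k^{\alpha, \beta}(z_i, z_{i_0})$ are nonconstant in $t$ and hence vanish only on a finite set; the diagonal entry is automatically positive as a squared norm. For (4), it suffices to exhibit one fixed pair $(i, j)$ for which the normalized ratio $|k^{\alpha, \beta}(z_j, z_i)|^2 / (k^{\alpha, \beta}(z_i, z_i)\, k^{\alpha, \beta}(z_j, z_j))$ is a nonconstant real-analytic function of $t$; restricting $\cA$ to a subinterval on which this ratio is strictly monotonic automatically separates distinct parameters.

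The main obstacle is condition (5), which consists of eight independent linear non-membership assertions, four of the form $k_{z_\ell} \notin \operatorname{span}\{k^{\alpha, \beta}_{z_i}, k^{\alpha, \beta}_{z_j}, P_{\cM_{\alpha, \beta}} k^{\alpha, \beta}_{\omega \text{ or } \zeta}\}$ and four of the analogous form with the dual basis element $f_\ell$ in place of $k_{z_\ell}$. Each reduces to the non-vanishing of a $4 \times 4$ Gram-type determinant whose entries are rational in $t$. To show non-vanishing for cofinitely many $t$, I would evaluate each such determinant at a single convenient parameter, such as $(\alpha, \beta) = (1, 0)$, where the kernel collapses to $1 + B_\lambda(z)\overline{B_\lambda(w)}/(1 - z\bar w)$ and the explicit coordinates become tractable; non-vanishing at one point implies non-vanishing off a finite set of $t$. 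Intersecting the complements of the finite exceptional sets arising from (3), (4), and (5) then yields the desired infinite family $\cA$, completing the verification that $z_1, \ldots, z_4$ are good points.
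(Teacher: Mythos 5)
Your plan is correct and follows the same skeleton as the paper (verify conditions (1)--(5) of Definition \ref{dfn:big}, with $\omega=\zeta=\sqrt{2}-1$, an evaluation at $(\alpha,\beta)=(1,0)$, and ``the bad parameters form a finite set'' arguments), but it diverges genuinely on the two places where the paper does the heavy lifting. For condition (4), the paper parametrizes $x=\beta/\alpha$, $y=\beta'/\alpha'$, factors the resulting quartic identities into $(xy+x+y+3)(\cdots)=0$ and $(xy+x+y-1)(\cdots)=0$, and uses a sum-of-squares positivity argument to show that the normalized ratios attached to $(z_1,z_2)$ and $(z_2,z_3)$ jointly separate \emph{every} pair of distinct parameters in the open upper half-circle; your softer route --- check that one normalized ratio is a nonconstant real-analytic function of $t$ and restrict $\mathcal{A}$ to a subinterval of strict monotonicity --- is much shorter and is legitimate, since Definition \ref{dfn:big} only demands an infinite family and the ratio is $\pi$-periodic so no antipodal (hence kernel-identical) pairs can occur in such an interval; the price is a smaller, less explicit $\mathcal{A}$ and the obligation to actually verify nonconstancy (a two-point evaluation, e.g.\ at $(1,0)$ and $(0,1)$, does it). For (3) and (5), where the paper argues via intersections of plane conics and via the rank-one-update determinant lemma plus Zariski density of the circle (Lemmas \ref{lem.singularity.of.semi.pick.matrix} and \ref{lem:algebraic.det.condition}), your replacement by real-analyticity in $t$ --- nonvanishing at one point forces a finite zero set on the compact circle --- is sound, because the relevant determinants are real-analytic (the Gram matrix of $\{k^{\alpha,\beta}_{z_i}\}$ is invertible for all $(\alpha,\beta)$, so no poles arise from $f_\ell$ or $P_{\cM_{\alpha,\beta}}k^{\alpha,\beta}_{\omega}$). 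One small discrepancy: for the dual-basis assertions in (5) the paper reduces to singularity of the $3\times 3$ minors $(C_\ell(\alpha,\beta))_{\ell,\ell}$, whereas you propose $4\times 4$ Gram determinants of the four vectors involved; both reductions are valid, but yours requires computing $f_\ell$ and the projected kernel explicitly at $(1,0)$, so it is somewhat more computational there. In all cases the concrete evaluations at $(1,0)$ (the analogue of the paper's Table 2 and the matrices $C_\ell(1,0)$) remain genuine obligations of your plan, not formalities.
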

Due to the fact that the proof is quite computational, we again break it into pieces.
\begin{prop}
    $z_1,\ldots,z_4$ satisfy conditions $(1)-(4)$.
\end{prop}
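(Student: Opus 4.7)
The plan is to verify each of the four conditions in Definition \ref{dfn:big} directly by a mixture of explicit computation and real-analytic genericity arguments. The chosen points are manifestly real and lie in $\mathbb{D}\setminus\{0,\lambda\}$, so the setup is well-defined.

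For condition (1), I would simply evaluate $B_\lambda(z_i) = z_i\frac{z_i-\lambda}{1-\lambda z_i}$ at each of the four points. The numerical values of $z_1,z_2,z_3,z_4$ and of $\lambda=1/\sqrt{2}$ are tuned so that a short arithmetic check produces $B_\lambda(z_1)=B_\lambda(z_4)$ and $B_\lambda(z_2)=B_\lambda(z_3)$; this is just verifying two identities.

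For condition (2), recall that a Blaschke product of the form $\frac{z-a}{1-\bar{a}z}$ evaluated at $0$ equals $-a$, so $B_{z_1,z_2,\omega}(0)=-z_1z_2\omega$. The constraint $B_{z_1,z_2,\omega}(0)=B_{z_1,z_2,\omega}(\lambda)$ clears denominators to a polynomial equation in $\omega$. Because $z_1,z_2,\lambda$ are real, the equation becomes a real polynomial in $\omega$; unwinding it yields a linear equation in $\omega$ whose unique solution can be checked to satisfy $|\omega|<1$.

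For conditions (3) and (4), I would fix a one-parameter real-analytic embedding $\mathbb{T}\hookrightarrow \mathbb{G}(1\times 1)$ (e.g.\ $\theta\mapsto(\cos\theta,\sin\theta)$) and treat the entries $k^{\alpha,\beta}(z_i,z_j)$ as real-analytic functions of $\theta$. For (3), the formula for the kernel gives
\[
k^{\alpha,\beta}(z_i,z_j) = (\alpha+\beta f_\lambda(z_j))(\alpha+\beta f_\lambda(z_i)) + \frac{B_\lambda(z_i)B_\lambda(z_j)}{1-z_iz_j},
\]
with all quantities real. Since the second summand is a nonzero constant (as $z_i\neq 0,\lambda$ so $B_\lambda(z_i)\neq 0$, and $z_iz_j<1$), each entry is a nonconstant real-analytic function of $\theta$, so its zero set is finite. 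Thus the set of $\theta\in\mathbb{T}$ for which every column of the $4\times 4$ kernel matrix contains a zero is a finite union of finite sets, hence finite; its complement is the infinite family required. For (4), the normalized ratio $\rho_{ij}(\theta):=|k^{\alpha,\beta}(z_j,z_i)|^2/[k^{\alpha,\beta}(z_i,z_i)k^{\alpha,\beta}(z_j,z_j)]$ is real-analytic on $\mathbb{T}$ wherever defined. Non-constancy of some $\rho_{ij}$ can be checked by a single numerical evaluation at two conveniently chosen values (e.g.\ $\theta=0$ and $\theta=\pi/2$, corresponding to $(\alpha,\beta)=(1,0)$ and $(0,1)$): at these two parameters one of the $\rho_{ij}$'s takes visibly different values. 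Because a nonconstant real-analytic function on the connected circle has finite fibers, the equivalence relation identifying $\theta,\theta'$ that produce identical normalized-kernel data has only finite classes, and a choice of one representative per class yields the required infinite set $\mathcal{A}$ on which condition (4) holds.

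The main obstacle is the arithmetic in conditions (1)--(3): one must verify the Blaschke-matching identities and solve the linear equation for $\omega$ without error, and in (3)--(4) one must explicitly produce either a column of the kernel matrix (or a ratio $\rho_{ij}$) which is manifestly nonconstant in $\theta$ so that the real-analytic genericity argument applies. Given the specific choice of $\lambda=1/\sqrt{2}$ and the four rational multiples of $1/\sqrt{2}$, these computations are routine but must be carried out carefully.
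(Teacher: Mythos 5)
Your plan is sound and reaches the right conclusions, but on conditions (3) and (4) it takes a genuinely different, softer route than the paper, and two small points need care. For (1) and (2) the paper does exactly what you propose: the values $B_{\lambda}(z_1)=B_{\lambda}(z_4)=\tfrac23$, $B_{\lambda}(z_2)=B_{\lambda}(z_3)=-\tfrac16$ are tabulated, and $\omega=\sqrt2-1$ is exhibited. Note, however, that the constraint $B_{z_1,z_2,\omega}(0)=B_{z_1,z_2,\omega}(\lambda)$ is \emph{quadratic} in $\omega$, not linear (it reduces to $\omega^2-2\sqrt2\,\omega+1=0$, with roots $\sqrt2\pm1$), so the existence of a root in $\D$ is not automatic from your phrasing and must be checked --- the computation does produce $\sqrt2-1\in\D\cap\R$, matching the paper. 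For (3), the paper intersects the circle $\alpha^2+\beta^2=1$ with the degree-two curves cutting out the zeroes of $k^{\alpha,\beta}(z_1,z_i)$ and invokes a Bezout-type count; your real-analytic argument in $\theta\mapsto(\cos\theta,\sin\theta)$ gives the same finiteness, though your stated reason (``the second summand is a nonzero constant'') is not what makes an entry nonconstant: what one should say is that $(\cos\theta+f_{\lambda}(z_i)\sin\theta)(\cos\theta+f_{\lambda}(z_j)\sin\theta)$ cannot be constant for real values of $f_{\lambda}(z_i),f_{\lambda}(z_j)$, or simply that each entry is nonzero at $(\alpha,\beta)=(1,0)$, hence not identically zero. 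The substantive difference is in (4): the paper passes to $x=\beta/\alpha$, explicitly factors the equality of the normalized ratios for the pairs $(z_1,z_2)$ and $(z_2,z_3)$ into quadrics, and shows by positivity that both equalities cannot hold for $x\neq y$; this proves (4) for \emph{every} pair of distinct points of the open upper half-circle, which makes the later removal of the finite exceptional set needed for (3) and (5) painless. Your argument only shows that the fibers of the normalized-kernel data are finite (the two-point nonconstancy check does work, e.g.\ your $\rho_{12}$ equals $\tfrac{7}{52}$ at $(1,0)$ and $\tfrac{7}{32}$ at $(0,1)$) and then selects one representative per finite equivalence class. That does yield an infinite family satisfying (3) and (4), so it suffices for this proposition, but it produces a far less explicit set $\mathcal{A}$, and when you later intersect with the finite exceptional sets from (3) and (5) you must re-choose representatives avoiding them (possible, since only finitely many classes meet a finite set); the paper's all-pairs statement avoids this bookkeeping.
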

\begin{proof}
    With this setup
$$B_{\lambda}(z)=\frac{\sqrt{2}z\left(\sqrt{2}z-1\right)}{2-\sqrt{2}z},\,f_{\lambda}(z)=\frac{z}{\sqrt{2}-z}.$$
Recall that for a fixed $(\alpha,\beta)\in\G(1\times 1)$, the kernel function $k^{\alpha,\beta}$ is given by
$$k^{\alpha,\beta}(z,w)=\overline{\left(\alpha+\beta f_{\lambda}(w)\right)}(\alpha+\beta f_{\lambda}(z))+\frac{B_{\lambda}(z)\cdot \overline{B_{\lambda}(w)}}{1-z\overline{w}}.$$
To make calculations more manageable, we added two tables of values that will be useful to us. Here we use the assumptions $|\alpha|^2+|\beta|^2=1$ and $\alpha, \beta \in \R$.
\begin{table}[h!]
    \centering
    \begin{minipage}{0.45\textwidth}
        \centering
        \begin{tabular}{c|c|c}
            \hline
            $z$ & $B_{\lambda}(z)$ & $f_{\lambda}(z)$ \\ 
            \hline
            \\[-1.5ex]
            $z_1$ & $\frac{2}{3}$ & $2$ \\ 
            \\[-1.5ex]
            $z_2$ & $-\frac{1}{6}$ & $\frac{1}{3}$ \\ 
            \\[-1.5ex]
            $z_3$ & $-\frac{1}{6}$ & $\frac{1}{2}$ \\ 
            \\[-1.5ex]
            $z_4$ & $\frac{2}{3}$ & $-\frac{1}{3}$ \\ 
            \hline
        \end{tabular}
        \caption{Values of $B_{\lambda}(z)$ and $f_{\lambda}(z)$ for different $z$.}
        \label{tab:values_B}
    \end{minipage}
    \hspace{0.5cm} 
        \begin{minipage}{0.45\textwidth}
        \centering
        \begin{tabular}{c|c}
            \hline
            $(z,w)$ & $k^{\alpha,\beta}(z,w)$ \\ 
            \hline
            \\[-1.5ex]
            $(z_1, z_1)$ & $(\alpha + 2\beta)^2 + 4$ \\ 
            \\[-1.5ex]
            $(z_1, z_2)$ & $\frac{1}{3}\alpha^2 + \frac{7}{3}\alpha\beta + \frac{1}{2}$ \\ 
            \\[-1.5ex]
            $(z_1, z_3)$ & $\frac{5}{2}\alpha\beta + \frac{4}{5}$ \\ 
            \\[-1.5ex]
            $(z_1, z_4)$ & $\frac{5}{3}\alpha^2 + \frac{5}{3}\alpha\beta - \frac{2}{5}$ \\ 
            \\[-1.5ex]
            $(z_2, z_2)$ & $(\alpha + \frac{1}{3}\beta)^2 + \frac{2}{63}$ \\ 
            \\[-1.5ex]
            $(z_2, z_3)$ & $\frac{5}{6}\alpha^2 + \frac{5}{6}\alpha\beta + \frac{1}{5}$ \\ 
            \\[-1.5ex]
            $(z_2, z_4)$ & $\frac{10}{9}\alpha^2 - \frac{1}{5}$ \\ 
            \\[-1.5ex]
            $(z_3, z_3)$ & $(\alpha + \frac{1}{2}\beta)^2 + \frac{1}{28}$ \\ 
            \\[-1.5ex]
            $(z_3, z_4)$ & $\frac{7}{6}\alpha^2 + \frac{1}{6}\alpha\beta - \frac{1}{4}$ \\ 
            \\[-1.5ex]
            $(z_4,z_4)$ & $(\alpha - \frac{1}{3} \beta)^2 + \frac{8}{9}$ \\
            \hline
        \end{tabular}
        \caption{Values of $k^{\alpha,\beta}(z,w)$ for different pairs $(z,w)$.}
        \label{tab:values_k}
    \end{minipage}
\end{table}

In particular, we see that condition $(1)$ of Definition \ref{dfn:big} is satisfied.
Secondly, it is straightforward to check that
$$B_{z_1,z_2,\sqrt{2}-1},\,B_{z_3,z_4,\sqrt{2}-1}\in\node.$$ That gives us condition $(2)$.
For condition $(3)$, consider the polynomial $f(\alpha,\beta) = \alpha^2 + \beta^2 - 1$. 
 Clearly, the real zeroes of $f$ are the circle. We will show that the first column/row is non-zero except for a finite subset of $(\alpha,\beta)$ in the circle. To see this, consider additionally $g_2(\alpha,\beta) = \alpha^2 + 7\alpha\beta + \frac{3}{2}$, $g_3(\alpha,\beta) = 5\alpha\beta + \frac{8}{5}$, and $g_4(\alpha,\beta) = 5\alpha^2 + 5\alpha\beta - \frac{6}{5}$. Since the vanishing of $g_i$ is equivalent to the vanishing of $k^{\alpha,\beta}(z_1,z_i)$, we get that the set of points $(\alpha,\beta)$, where the first column has zeroes in it, is contained in
\[
D = \bigcup_{i=2}^4 Z(f) \cap Z(g_i).
\]
Here, $Z(f)$ stands for the zero locus of $f$ and similarly for the $g_i$. Since these are algebraic planar curves each of degree $2$, that do not have components in common, each intersection set has at most $4$ real points. Therefore, $D$ is finite. Therefore, on the complement of $D$, condition $(3)$ of Definition \ref{dfn:big} holds.

As for condition $(4)$, we first note that if we multiply $(\alpha,\beta)$ by a unimodular scalar, we do not change the kernel. Hence, we are considering, in fact, the real projective line. However, we will restrict ourselves to a half-circle. Let $A'\subset\bb{T}$ be the upper open half-circle. We will show that for every $(\alpha,\beta)\neq (\alpha',\beta') \in \mathcal{A}'$, either \eqref{eq:0} is satisfied for $(z_1,z_2)$ or for $(z_2,z_3)$.
To this end, fix $(\alpha,\beta)\neq(\alpha',\beta')\in \mathcal{A}'$ and let $\frac{\beta}{\alpha}=x$. Since $\alpha^2+\beta^2=1$,
    $$x^2=\left(\frac{\beta}{\alpha}\right)^2=\frac{1-\alpha^2}{\alpha^2}\Rightarrow \alpha^2=\frac{1}{1+x^2}.$$
    We can rewrite the kernels in terms of $x$
    \[k^{\alpha,\beta}(z_1,z_1)=(\alpha+2\beta)^2+4=\alpha^2\left(1+2\frac{\beta}{\alpha}\right)\left(1+2\frac{\beta}{\alpha}\right)+4=\]
    \[=\frac{1}{1+x^2}(1+2x)^2+4=\frac{8x^2+4x+5}{1+x^2}.\]
    Similarly,
    $$k^{\alpha,\beta}(z_2,z_2)=\frac{1}{1+x^2}\left(1+\frac{x}{3}\right)^2+\frac{2}{63}=\frac{9x^2+42x+65}{63(1+x^2)},$$
    
    $$k^{\alpha,\beta}(z_3,z_3)=\frac{1}{1+x^2}(1+\frac{x}{2})^2+\frac{1}{28}=\frac{8x^2+28x+29}{28(1+x^2)},$$
    $$k^{\alpha,\beta}(z_1,z_2)=\frac{1}{1+x^2}\left(1+\frac{x}{3}\right)(1+2x)-\frac{1}{6}=\frac{3x^2+14x+5}{6(1+x^2)},$$
    
    $$k^{\alpha,\beta}(z_2,z_3)=\frac{1}{1+x^2}\left(1+\frac{x}{2}\right)\left(1+\frac{x}{3}\right)+\frac{1}{30}=\frac{6x^2+25x+31}{30(1+x^2)}.$$
    By letting $y=\frac{\beta'}{\alpha'}$ and replacing $x$ with $y$ in the calculations above, we can write $k^{\alpha',\beta'}(z_i,z_j)$ in terms of $y$. By taking $j=1,\,i=2$, Equation \eqref{eq:0} becomes
    \begin{equation}
        \frac{\left(\frac{\left(3x^{2}+14x+5\right)}{6\left(1+x^{2}\right)}\right)^{2}}{\frac{\left(8x^{2}+4x+5\right)}{1+x^{2}}\cdot\frac{\left(9x^{2}+42x+65\right)}{1+x^{2}}}=\frac{\left(\frac{\left(3y^{2}+14y+5\right)}{6\left(1+y^{2}\right)}\right)^{2}}{\frac{\left(8y^{2}+4y+5\right)}{1+y^{2}}\cdot\frac{\left(9y^{2}+42y+65\right)}{1+y^{2}}}.
    \end{equation}
    Simplifying and canceling some terms on both sides, we obtain the equation
    \[
    (3x^2+14x+5)^2(8y^2+4y+5)(9y^2+42y+65)-(3y^2+14y+5)^2(8x^2+4x+5)(9x^2+42x+65)=0.
    \]
    As we are looking for solutions with $x\neq y$, and as $x=y$ is a solution, by dividing by $x-y$, refactoring, and dividing both sides by the constants, the equations become
    \begin{equation}
        \left(xy+x+y+3\right)\left(12x^{2}y^{2}+31x^{2}y+31xy^{2}-5x^{2}+28xy-5y^{2}-65x-65y-50\right)=0\label{eq:5}
    \end{equation}
    We do the same analysis for $j=2,\,i=3$. For this choice of indices, \eqref{eq:0} becomes
    \begin{equation}\frac{\left(\frac{\left(6x^{2}+25x+31\right)}{30\left(1+x^{2}\right)}\right)^{2}}{\frac{\left(9x^{2}+42x+65\right)}{1+x^{2}}\cdot\frac{\left(8x^{2}+28x+29\right)}{63\left(1+x^{2}\right)}}=\frac{\left(\frac{\left(6y^{2}+25y+31\right)}{30\left(1+y^{2}\right)}\right)^{2}}{\frac{\left(9y^{2}+42y+65\right)}{1+y^{2}}\cdot\frac{\left(8y^{2}+28y+29\right)}{63\left(1+y^{2}\right)}}\label{eq:1}\end{equation}
        Again, canceling some terms on both sides, dividing by $x-y$, refactoring, and dividing both sides by the relevant constants, $\eqref{eq:1}$ becomes
        $$\left(xy+x+y-1\right)\left(12x^{2}y^{2}+25x^{2}y+25xy^{2}+37x^{2}+37y^{2}+25x+25y+62\right)=0.$$
        By our analysis, $(x,y)$ with $x\neq y$ solves $\eqref{eq:5}$ if and only if
        \begin{equation}
            xy+x+y+3=0.\label{eq:2}
        \end{equation} or
        \begin{equation} 12x^{2}y^{2}+31x^{2}y+31xy^{2}-5x^{2}+28xy-5y^{2}-65x-65y-50=0.\label{eq:3}\end{equation}
            and it solves $\eqref{eq:1}$ if and only if
            \begin{equation}
                xy+x+y-1=0.\label{eq:6}
            \end{equation}
            or
            \begin{equation}
                12x^{2}y^{2}+25x^{2}y+25xy^{2}+37x^{2}+37y^{2}+25x+25y+62=0.\label{eq:4}
            \end{equation}
            Clearly, $\eqref{eq:2}$ and $\eqref{eq:6}$ can not be satisfied simultaneously. Moreover, notice that
            $$12x^{2}y^{2}+25x^{2}y+25xy^{2}+37x^{2}+37y^{2}+25x+25y+62=$$$$=6x^2\left(y^2+\frac{25}{6}y+\frac{30.75}{6}\right)+6y^2\left(x^2+\frac{25}{6}x+\frac{30.75}{6}\right)+(2.5x+5)^2+(2.5y+5)^2+12=$$$$=6x^2\left(\left(y+\frac{25}{12}\right)^2+\frac{113}{144}\right)+6y^2\left(\left(x+\frac{25}{12}\right)^2+\frac{113}{144}\right)+(2.5x+5)^2+(2.5y+5)^2+12>0,$$ i.e., $\eqref{eq:4}$ is never satisfied. Therefore, it is enough to check that $\eqref{eq:6}$ and $\eqref{eq:3}$ are not satisfied simultaneously. Assume that $(x,y)$ with $x\neq y$ is a solution of $\eqref{eq:6}$. Plugging $xy=1-x-y$ into $\eqref{eq:3}$, we see that
            $$12(1-x-y)^2+31x(1-x-y)+31y(1-x-y)-5x^2+28(1-x-y)-5y^2-65x-65y-50=$$
            $$=-24x^2-38xy-24y^2-86x-86y-10=-24x^2-38(1-x-y)-24y^2-86x-86y-10=$$$$=-24(x^2+y^2+2x+2y+2)=-24((x+1)^2+(y+1)^2)<0,$$
            i.e., $(x,y)$ is not a solution of $\eqref{eq:3}$. Therefore, $\eqref{eq:0}$ can not be satisfied for both $(z_1,z_2)$ and $(z_2,z_3)$, hence condition $(4)$ is satisfied.
\end{proof}
It is left to prove our points satisfy condition $(5)$ of Definition \ref{dfn:big}. To this end, we first prove two technical lemmas.
\begin{lem}
            Let $\left\{z_1,\ldots,z_n\right\},\,\left\{w_1,\ldots,w_n\right\}\subset\bb{D}\setminus\left\{0,\lambda\right\}$ be two sets of points such that $z_i\neq z_j$ and $w_i\neq w_j$ for all $1\leq i\neq j\leq n$. Then,
            \[P=\Bigg[\frac{B_\lambda(z_i)\overline{B_{\lambda}(w_j)}}{1-z_i\overline{w_j}}\Bigg]_{i,j=1}^n\] is invertible.\label{lem.singularity.of.semi.pick.matrix}
        \end{lem}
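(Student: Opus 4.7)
The plan is to reduce the invertibility of $P$ to the invertibility of a Cauchy matrix via an elementary factorization, and then apply the Cauchy determinant formula.

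First, I would factor $P$ as a product of diagonal matrices and a Szeg\H{o}-kernel matrix. Setting $D_z = \operatorname{diag}(B_\lambda(z_1),\ldots,B_\lambda(z_n))$ and $D_w = \operatorname{diag}(B_\lambda(w_1),\ldots,B_\lambda(w_n))$, we have
\[
P = D_z \cdot K \cdot D_w^*, \qquad K = \left[\frac{1}{1 - z_i\overline{w_j}}\right]_{i,j=1}^n.
\]
Since $B_\lambda$ is a Blaschke product vanishing in $\D$ only at $0$ and $\lambda$, the hypothesis $z_i, w_j \in \D \setminus \{0,\lambda\}$ forces $D_z$ and $D_w$ to be invertible. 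Thus invertibility of $P$ is equivalent to invertibility of $K$.

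Second, I would compute $\det K$ directly. Since each $z_i \neq 0$, we may write $\frac{1}{1-z_i\overline{w_j}} = \frac{1}{z_i}\cdot\frac{1}{z_i^{-1} - \overline{w_j}}$, whence
\[
K = \operatorname{diag}(z_1^{-1},\ldots,z_n^{-1}) \cdot \left[\frac{1}{z_i^{-1} - \overline{w_j}}\right]_{i,j=1}^n.
\]
The right-hand factor is a classical Cauchy matrix in the nodes $a_i = z_i^{-1}$ and $b_j = \overline{w_j}$. Its determinant is
\[
\frac{\prod_{i<j}(z_i^{-1} - z_j^{-1})(\overline{w_j} - \overline{w_i})}{\prod_{i,j}(z_i^{-1} - \overline{w_j})}.
\]
The numerator is nonzero because the $z_i$ are pairwise distinct (hence so are the $z_i^{-1}$) and the $w_j$ are pairwise distinct; the denominator is nonzero because $|z_i\overline{w_j}| < 1$, which gives $1 - z_i\overline{w_j} \neq 0$, equivalently $z_i^{-1} - \overline{w_j} \neq 0$.

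There is no real obstacle: every hypothesis needed (nonvanishing of $B_\lambda$ off $\{0,\lambda\}$, distinctness of the two families, and the estimate $|z_i\overline{w_j}| < 1$) is built into the assumptions of the lemma. A conceptual alternative, if one prefers to avoid explicit determinantal identities, is to interpret $K$ as the cross-Gram matrix $[\langle k_{w_j}, k_{z_i}\rangle_{H^2}]$ and appeal to the well-known linear independence of Szeg\H{o} kernels at distinct points to deduce that $K = V^*W$ is invertible, where $V, W \colon \C^n \to H^2$ send the standard basis to $\{k_{z_i}\}$ and $\{k_{w_j}\}$ respectively. Either route yields $\det K \neq 0$, and hence $\det P = \prod_i B_\lambda(z_i)\overline{B_\lambda(w_i)} \cdot \det K \neq 0$.
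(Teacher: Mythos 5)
Your argument is correct, and the key step is genuinely different from the paper's. Both proofs start with the same reduction: factor $P = D_z \tilde{P} D_w^*$ with $D_z$, $D_w$ diagonal and invertible (because $B_\lambda$ vanishes in the unit disc only at $0$ and $\lambda$, which the hypothesis excludes), so that everything rests on the invertibility of $\tilde{P} = \bigl[\frac{1}{1-z_i\overline{w_j}}\bigr]$. At that point the paper argues by induction on $n$: it expands $\det\tilde{P}$ along the first row, regards it as a rational function of $z_1$, and compares its zeros (at $z_2,\ldots,z_n$ and $\infty$) with its poles (at the points $1/\overline{w_j}$, genuine poles by the induction hypothesis on the minors) to conclude it cannot also vanish at $z_1$. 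You instead pull out $\operatorname{diag}(z_1^{-1},\ldots,z_n^{-1})$ (legitimate, since $z_i \neq 0$) and quote the classical Cauchy determinant formula; distinctness of the $z_i$ and of the $w_j$ makes the numerator nonzero, and $|z_i\overline{w_j}|<1$ makes the denominator nonzero. Your route is shorter and gives an explicit formula for $\det P$, while the paper's zero/pole count is self-contained and avoids invoking the Cauchy identity (it is, in effect, a nonvanishing version of the same computation). One caveat: your ``conceptual alternative'' is not sound as stated. Writing $\tilde{P} = V^*W$ with $V,W\colon\C^n\to H^2$ injective (linear independence of each family of Szeg\H{o} kernels) does not by itself imply that $V^*W$ is invertible --- the two ranges could fail to be in general position, and in a general RKHS a cross-Gram matrix of two linearly independent families can be singular. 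Some additional input (ultimately the Cauchy determinant, or an argument like the paper's) is still needed there. Since that remark is only an aside, your main proof stands as written.
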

        \begin{proof}
            As $B_{\lambda}(z_i)\neq 0$ and $B_{\lambda}(w_j)\neq 0$ by assumption and 
            $$P = \begin{pmatrix}
            B_{\lambda}(z_1)&0&\ldots&0
            \\
            0&B_{\lambda}(z_2)&\ldots&0
            \\
            \vdots&\vdots&\ddots&\vdots
            \\
            0&0&\ldots&B_{\lambda}(z_n)
        \end{pmatrix}\underbrace{\Big[\frac{1}{1-z_i\overline{w_j}}\Big]}_{\tilde{P}}\begin{pmatrix}
        \overline{B_{\lambda}(w_1)}&0&\ldots&0
        \\
        0&\overline{B_{\lambda}(w_2)}&\ldots&0
        \\
        \vdots&\vdots&\ddots&\vdots
        \\
        0&0&\ldots&\overline{B_{\lambda}(w_n)}
    \end{pmatrix},$$
    it is enough to prove $\tilde{P}$ is invertible. We prove this by induction on $n$. For $n=1$, the claim is clear. Assume that this is true for $n$, and consider the sets $\left\{z_1,\ldots,z_{n+1}\right\},\,\left\{w_1,\ldots,w_{n+1}\right\}$ such that they satisfy the assumption.
    Calculating the determinant of $\tilde{P}$ as a function of $z_1$ gives us
    $$\det(\tilde{P})=\sum_{j=1}^{n+1}(-1)^{1+j}\det((\tilde{P})_{1,j})\cdot\frac{1}{1-z_1\overline{w_j}},$$
    with $(\tilde{P})_{1,j}$ denoting the $(1,j)$'th minor of $\tilde{P}$. Replacing $z_1$ with a variable $z$ and thinking of $\det(\tilde{P})$ as a function of $z$, $\det(\tilde{P})(z)$ has poles at $\frac{1}{w_j}$ for all $1\leq j\leq n+1$ (by induction hypothesis, $\det((P)_{1,j})\neq 0$), and
    $$\det(\tilde{P})(z_2)=\ldots=\det(P)(z_{n+1})=\det(\tilde{P})(\infty)=0.$$
    i.e., it has $n+1$ zeroes at $z_2,\ldots,z_n,\infty$. Therefore, $\det(\tilde{P})=\det(\tilde{P})(z_1)\neq 0$.
    
\end{proof}
\begin{lem}
    Let $\left\{z_1,\ldots,z_n\right\},\,\left\{w_1,\ldots,w_n\right\}\subset(\bb{D}\cap\bb{R})\setminus\left\{0,\lambda\right\}$ be two sets of points such that $z_i\neq z_j$ and $w_i\neq w_j$ for all $1\leq i\neq j\leq n$. Then, the set of all pairs $(\alpha, \beta)\in \bb{T}$ for which the matrix
    $\left[k^{\alpha,\beta}(z_i,w_j)\right]_{i,j=1}^4$ is singular is either $\bb{T}$ or finite.
    \label{lem:algebraic.det.condition}
\end{lem}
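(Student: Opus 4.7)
The plan is to recognise that the determinant in question is a polynomial in $(\alpha,\beta)$ with real coefficients; its restriction to the real circle $\mathbb{T}$ is therefore a real-analytic function, and so obeys the dichotomy ``identically zero, or finitely many zeros on a compact set''.

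First, I would note that since $\lambda$ and all of the $z_i,w_j$ are real, the quantities $f_\lambda(z_i), f_\lambda(w_j), B_\lambda(z_i), B_\lambda(w_j)$ are all real. Consequently, for real $(\alpha,\beta)$ the kernel specialises to
\[
k^{\alpha,\beta}(z_i,w_j) = (\alpha + \beta f_\lambda(z_i))(\alpha + \beta f_\lambda(w_j)) + \frac{B_\lambda(z_i)B_\lambda(w_j)}{1-z_iw_j},
\]
which is a polynomial of total degree $2$ in $(\alpha,\beta)$ with real coefficients. Taking the determinant,
\[
D(\alpha,\beta) := \det\!\left[k^{\alpha,\beta}(z_i,w_j)\right]_{i,j=1}^n
\]
is therefore a polynomial in $(\alpha,\beta)$ with real coefficients (of total degree at most $2n$).

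Next, I would parametrise $\mathbb{T}$ by $(\alpha,\beta) = (\cos t, \sin t)$, $t \in [0,2\pi]$, so that $d(t) := D(\cos t, \sin t)$ is a real trigonometric polynomial, hence real-analytic on the compact interval $[0,2\pi]$. If $d$ has infinitely many zeros there, Bolzano--Weierstrass produces an accumulation point, and the identity principle for real-analytic functions forces $d \equiv 0$; in that case $D$ vanishes on all of $\mathbb{T}$. Otherwise $d$ has only finitely many zeros in $[0,2\pi)$, and so does $D$ on $\mathbb{T}$, giving a finite singular locus.

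The proof is largely routine, with no substantial obstacle; the key observation is simply that restricting the kernel to real data makes it a polynomial in $(\alpha,\beta)$ with real coefficients, after which the identity principle for real-analytic functions on the compact circle closes the argument.
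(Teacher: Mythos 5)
Your proof is correct, and it reaches the dichotomy by a genuinely different (and more elementary) route than the paper. The paper first invokes the preceding lemma to see that the ``semi-Pick'' matrix $P=\left[\frac{B_\lambda(z_i)\overline{B_\lambda(w_j)}}{1-z_i\overline{w_j}}\right]$ is invertible, writes $\left[k^{\alpha,\beta}(z_i,w_j)\right]=u_{\alpha,\beta}v_{\alpha,\beta}^*+P$ as a rank-one perturbation, and applies the matrix determinant lemma to get $\det = \det(P)\left(1+v_{\alpha,\beta}^*P^{-1}u_{\alpha,\beta}\right)$, where the second factor is a real polynomial of degree at most $2$ in $(\alpha,\beta)$; it then argues algebro-geometrically that the circle is cut out by the irreducible quadratic $1-\alpha^2-\beta^2$ and is Zariski dense in its complexification, so a quadratic vanishing at infinitely many points of the circle vanishes on all of it, while otherwise the two conics meet in only finitely many points. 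You bypass the rank-one reduction entirely: the full determinant $D(\alpha,\beta)$ is already a real polynomial (of degree at most $2n$), its restriction $d(t)=D(\cos t,\sin t)$ is a trigonometric polynomial, hence real-analytic and periodic, and the identity principle on the compact circle yields the same ``finite or all of $\T$'' alternative. Both arguments are complete; yours needs neither the invertibility of $P$ nor the determinant lemma and works verbatim for any $n$, whereas the paper's reduction carries extra structure it does not actually use in this lemma, namely that the singular locus is the intersection of the circle with a fixed conic, which (by B\'ezout) would even give an explicit bound on the number of exceptional parameters; your approach could be made similarly quantitative by noting that a nonzero trigonometric polynomial of degree $N$ has at most $2N$ zeros per period.
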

\begin{proof}
    Rewrite the matrix as
    $$\underbrace{\begin{pmatrix}
    \alpha+f_{\lambda}(z_1)\beta
    \\
    \vdots
    \\
    \alpha+f_{\lambda}(z_n)\beta
\end{pmatrix}}_{u_{\alpha,\beta}}\cdot \underbrace{\begin{pmatrix}
\alpha+f_{\lambda}(w_1)\beta
\\
\vdots
\\
\alpha+f_{\lambda}(w_n)\beta
\end{pmatrix}^*}_{v^*_{\alpha,\beta}}+\underbrace{\Bigg[\frac{B_\lambda(z_i)\overline{B_{\lambda}(w_j)}}{1-z_i\overline{w_j}}\Bigg]}_{P}.$$
This matrix is a rank-one perturbation of the invertible matrix $P$. By the determinant lemma, its determinant is given by
$\det(P)(1+v^*_{\alpha,\beta}P^{-1}u_{\alpha,\beta})$.
As everything is real-valued, the polynomial $1+v^*_{\alpha,\beta}P^{-1}u_{\alpha,\beta}$ is a real polynomial of degree $2$ in $(\alpha,\beta)$. Note that the polynomial $1-\alpha^2 - \beta^2$ cuts out the unit circle. A straightforward computation shows that this polynomial is irreducible. The zero sets of two polynomials of degree $2$ can have either a finite intersection (including the empty set) or have a component in common. The circle is the set of real points in the complex variety cut out by the polynomial $1-\alpha^2-\beta^2$. Since it is infinite, it is Zariski dense in the curve. Therefore, if a polynomial of degree $2$ vanishes on an infinite set in the circle, it must vanish on the entire complex variety. We conclude that  the level-set
$$\left\{(\alpha,\beta)\in\bb{T} \mid 1+v^*_{\alpha,\beta}P^{-1}u_{\alpha,\beta}=0\right\}$$
is either finite or is equal to $\bb{T}$.
\end{proof}
\begin{prop}
    The points $z_1,\ldots,z_4$ satisfy condition $(5)$ of Definition \ref{dfn:big}.
\end{prop}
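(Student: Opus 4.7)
The plan is to reduce each of the eight non-membership statements in condition $(5)$ to the non-vanishing of a $4 \times 4$ determinant whose entries are polynomials of degree at most $2$ in $(\alpha,\beta)$, and then combine the dichotomy behind Lemma \ref{lem:algebraic.det.condition} with a single numerical check at one test point.

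The four conditions of the form $k^{\alpha,\beta}_{z_\ell}\notin\operatorname{span}\{k^{\alpha,\beta}_{z_{i_1}},k^{\alpha,\beta}_{z_{i_2}},P_{\cM_{\alpha,\beta}}k^{\alpha,\beta}_\mu\}$ are translated as follows: writing $P_{\cM_{\alpha,\beta}}k^{\alpha,\beta}_\mu=k^{\alpha,\beta}_\mu-h$ with $h\in\cM_{\alpha,\beta}^\perp$ and pairing the hypothetical membership relation against each $k^{\alpha,\beta}_{z_j}$ for $j=1,\dots,4$, the $h$-term disappears and membership becomes rank-deficiency of the $4\times 4$ matrix with columns $(k^{\alpha,\beta}(z_j,z_\ell))_j$, $(k^{\alpha,\beta}(z_j,z_{i_1}))_j$, $(k^{\alpha,\beta}(z_j,z_{i_2}))_j$, $(k^{\alpha,\beta}(z_j,\mu))_j$. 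Since $\omega=\zeta=\sqrt{2}-1\in(\bb{D}\cap\bb{R})\setminus\{0,\lambda\}$ is distinct from every $z_i$, this matrix is of exactly the form treated in Lemma \ref{lem:algebraic.det.condition}. For the four conditions on the dual basis elements $f_\ell$, pairing with $k^{\alpha,\beta}_{z_j}$ and using $\ip{f_\ell}{k^{\alpha,\beta}_{z_j}}=\delta_{\ell,j}$ yields a $4\times 4$ matrix whose first column is $e_\ell$ and whose remaining three columns are again of kernel-value form; expanding along the $e_\ell$ column reduces the condition to the vanishing of a $3\times 3$ Pick-type determinant, to which the same rank-one perturbation argument (via Lemma \ref{lem.singularity.of.semi.pick.matrix}) and the irreducibility of $\alpha^2+\beta^2-1$ apply verbatim. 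In all eight cases the bad set in $\bb{T}$ is either finite or all of $\bb{T}$.

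To rule out the `all of $\bb{T}$' alternative, I evaluate each determinant at the test point $(\alpha,\beta)=(1,0)$, where the kernel simplifies to $k^{1,0}(z,w)=1+B_\lambda(z)\overline{B_\lambda(w)}/(1-z\bar w)$. Using the tabulated values of $B_\lambda$ and $f_\lambda$ at $z_1,\dots,z_4$ together with the directly computed $B_\lambda(\sqrt{2}-1)$ and $f_\lambda(\sqrt{2}-1)$, each of the eight determinants can be written out in closed form and seen to be non-zero. Hence each bad set is finite; letting $D\subset\bb{T}$ be the finite union of these eight zero sets together with the finite exclusion sets from conditions $(3)$ and $(4)$, the set $\cA$ obtained from the half-circle $\cA'$ of the previous proposition by removing $D$ is infinite and satisfies all five conditions of Definition \ref{dfn:big} simultaneously.

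The main technical obstacle is this last step: although every ingredient is elementary, one must carry out and inspect eight explicit determinant evaluations, and the coincidence $\omega=\zeta$ means several of them share sub-expressions that must be tracked carefully to ensure no spurious cancellation. Once those non-vanishing checks are in hand, the combination of the rank-one perturbation trick and the Zariski-density argument for the real unit circle closes the proof with no additional work.
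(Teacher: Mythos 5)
Your reduction — pairing each hypothetical membership relation against the four kernels $k^{\alpha,\beta}_{z_j}$ so that the projection term $P_{\cM_{\alpha,\beta}}k^{\alpha,\beta}_{\sqrt{2}-1}$ contributes only kernel values, thereby turning each of the eight non-membership statements into the non-vanishing of a $4\times 4$ (respectively $3\times 3$) determinant of kernel values with one column at $\omega=\zeta=\sqrt{2}-1$, then invoking Lemmas \ref{lem.singularity.of.semi.pick.matrix} and \ref{lem:algebraic.det.condition} for the finite-or-all-of-$\T$ dichotomy and excluding the latter by a single evaluation at $(\alpha,\beta)=(1,0)$ — is exactly the paper's argument, where these determinants appear as the matrices $C_{\ell}(\alpha,\beta)$ and their $(\ell,\ell)$ minors. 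The only difference is presentational: the paper displays the four matrices $C_{\ell}(1,0)$ explicitly before asserting their invertibility, while you leave the eight non-vanishing evaluations as an acknowledged computation; since those checks do go through, your outline is correct and essentially identical to the paper's proof.
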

\begin{proof}
    First of all, we note that for our choice of $z_1,\ldots,z_4$, we can take
    $\omega=\zeta=\sqrt{2}-1$. Let $\sigma=(12)(34),\,\tau=(14)(23)\in S_4$.
    Assume that $\ell\in\{1,2\}$ and $(\alpha,\beta)\in\mathcal{A}'$ satisfies
    $$k_{z_{\ell}}\in \operatorname{span}\{k^{\alpha,\beta}_{z_{\tau(\ell)}},k^{\alpha,\beta}_{z_{\tau(\sigma(\ell))}},P_{M_{\alpha,\beta}}k_{\sqrt{2}-1}^{\alpha,\beta}\}.$$
    Since $k^{\alpha,\beta}_{z_{\tau(\ell)}},\,k^{\alpha,\beta}_{z_{\sigma(\tau(\ell))}},\,k^{\alpha,\beta}_{z_{\ell}}$ are linearly independent, there are scalars $\gamma_1,\gamma_2,\gamma_3$, such that
    $$P_{M_{\alpha,\beta}}k_{\sqrt{2}-1}^{\alpha,\beta}=\gamma_1k^{\alpha,\beta}_{z_{\ell}}+\gamma_2k^{\alpha,\beta}_{z_{\tau(\ell)}}+\gamma_3k^{\alpha,\beta}_{z_{\tau(\sigma(\ell))}}.$$
    By taking inner products with $k^{\alpha,\beta}_{z_i}$ on both sides for $1\leq i\leq 4$, 
    we see that the matrix
    $$C_{\ell}(\alpha,\beta)=\left[k^{\alpha,\beta}(z_i,w_j)\right]$$
    is singular, where
    $$w_j=\begin{cases}
        z_j&j\neq\sigma(\ell)
        \\
        \sqrt{2}-1&j=\sigma(\ell).
    \end{cases}$$
    Now assume that $1\leq\ell\leq 4$ and $(\alpha,\beta)\in\mathcal{A}$ satisfy
    $$f_{\ell} \in\operatorname{span}\{k^{\alpha,\beta}_{z_{\tau(\ell)}},k^{\alpha,\beta}_{z_{\tau(\sigma(\ell))}},P_{\mathcal{M}_{\alpha,\beta}}k^{\alpha,\beta}_{\sqrt{2}-1}\}.$$
    Then, there are scalars $\gamma_1,\,\gamma_2,\,\gamma_3$, such that
    $$f_{\ell}=\gamma_1k^{\alpha,\beta}_{z_{\tau(\ell)}}+\gamma_2k^{\alpha,\beta}_{z_{\tau(\sigma(\ell))}}+\gamma_3P_{\mathcal{M}_{\alpha,\beta}}k^{\alpha,\beta}_{\sqrt{2}-1}.$$
    By taking inner products with $k^{\alpha,\beta}_{z_i}$ for $1\leq i\neq\ell\leq 4$, this implies that the $3\times 3$ minor 
    $\left(C_{\ell}(\alpha,\beta)\right)_{\ell,\ell}$
    is singular.
    By taking $\alpha=1,\,\beta=0$ and calculating $k^{1,0}(z_\ell,\sqrt{2}-1)$ for all $1\leq\ell\leq 4$, we obtain the following four matrices:
    $$C_{1}(1,0)=\begin{pmatrix}
        5&\frac{17-8\sqrt{2}}{7}&\frac{4}{5}&\frac{19}{15}
        \\
        \frac{5}{6}&\frac{13-7\sqrt{2}}{3}&\frac{31}{30}&\frac{41}{45}
        \\
        \frac{4}{5}&\frac{5(\sqrt{2}-1)}{\sqrt{2}}&\frac{29}{28}&\frac{11}{12}
        \\
        \frac{19}{15}&\frac{5(2\sqrt{2}+1)}{21}&\frac{11}{12}&\frac{7}{6}
    \end{pmatrix},\,C_{2}(1,0)=\begin{pmatrix}
        \frac{17-8\sqrt{2}}{7}&\frac{5}{6}&\frac{4}{5}&\frac{19}{15}
        \\
        \frac{13-7\sqrt{2}}{3}&\frac{65}{63}&\frac{31}{30}&\frac{41}{45}
        \\
        \frac{5(\sqrt{2}-1)}{\sqrt{2}}&\frac{31}{30}&\frac{29}{28}&\frac{11}{12}
        \\
        \frac{5(2\sqrt{2}+1)}{21}&\frac{41}{45}&\frac{11}{12}&\frac{7}{6}
    \end{pmatrix},$$
    $$C_{3}(1,0)=\begin{pmatrix}
        
        5&\frac{5}{6}&\frac{4}{5}&\frac{17-8\sqrt{2}}{7}
        \\
        \frac{5}{6}&\frac{65}{63}&\frac{31}{30}&\frac{13-7\sqrt{2}}{3}
        \\
        \frac{4}{5}&\frac{31}{30}&\frac{29}{28}&\frac{5(\sqrt{2}-1)}{2}
        \\
        \frac{19}{15}&\frac{41}{45}&\frac{11}{12}
        &\frac{5(2\sqrt{2}+1)}{21}
    \end{pmatrix},\,C_4(1,0)=\begin{pmatrix}
        5&\frac{5}{6}&\frac{17-8\sqrt{2}}{7}&\frac{19}{15}\\\frac{5}{6}&\frac{65}{63}&\frac{13-7\sqrt{2}}{3}&\frac{41}{45}\\\frac{4}{5}&\frac{31}{30}&\frac{5(\sqrt{2}-1)}{2}&\frac{11}{12}\\\frac{19}{15}&\frac{41}{45}&\frac{5(2\sqrt{2}+1)}{21}&\frac{7}{6}
    \end{pmatrix}.$$
    The matrices $C_{\ell}(1,0)$ and $\left(C_{\ell}(1,0)\right)_{\ell,\ell}$ are invertible for all $1\leq\ell\leq 4$. By Lemmas \ref{lem.singularity.of.semi.pick.matrix} and \ref{lem:algebraic.det.condition}, this implies that for every $1\leq\ell\leq 4$, $C_{\ell}(\alpha,\beta)$ and $\left(C_{\ell}(\alpha,\beta)\right)_{\ell,\ell}$ are invertible for all $(\alpha,\beta)\in\mathcal{A}'\setminus S$ for some finite set $S\subset A'$. In particular, by taking $A=A'\setminus S$, $A$ is the desired set, and condition $(5)$ is satisfied.
\end{proof}

\begin{proof}[Proof of Theorem \ref{thm:example_good}]
    As demonstrated above, if we take $A$ as the complement of a finite set in the open upper-half circle, then the conditions of Definition \ref{dfn:big} are satisfied. Hence, our points are good.
\end{proof}

\begin{remark}
    A comment on the choices made in the example is in order. The technicalities in proving the unitary inequivalence forced us to restrict our attention to real $(\alpha,\beta)$ and real points. We believe that every $[\alpha:\beta] \in \mathbb{P}^1(\C)$ outside of a finite set, perhaps, gives rise to a boundary representation of $\node/I$. Therefore, $C(P^1(\C))$ is a quotient of $C^*_e(\node/I)$. It is also challenging to find a higher-dimensional boundary representation. It might be the case that most $(\alpha, \beta)$ in the Grassmannian will give such representations.
\end{remark}

\section{A $C^*$-cover candidate}\label{sec:cover}

The non-commutative Grassmanian, $G_n^{nc}$, introduced by Brown \cite{brown1981ext}, is the universal $C^*$-algebra generated by the identity element and $\{P_{i,j} \mid 1\leq i,j\leq n\}$, such that the matrix
$$P=[P_{i,j}]_{i,j=1}^n$$ is a projection. In \cite{mcclanahan1992c}, McClanahan proved that $G_2^{nc}$ is a $C^*$-subalgebra of the free product $M_2*_{\C}{\C}^2$. Consequently, by a theorem of Exel and Loring \cite[Theorem 3.2]{exel1992finite}, $G_2^{nc}$ is RFD. In this section, we present another characterization of $G_2^{nc}$, and define a completely isometric embedding of $H^{\infty}_{\text{node}}/I$ into $M_n(G_2^{nc})$, providing a candidate for the $C^*-$ envelope of $H^{\infty}_{\text{node}}/I$.
\\\\
    Let $\frak{A}$ be the universal $C^*$-algebra generated by two elements $x,y$ such that $x^*x+y^*y=1$.
    Note that such universal $C^*$-algebra exists by \cite[Section $2$, 8.3.1]{blackadar2006operator}, since the relation is of the form
$$\|p(x,y,x^*,y^*)\|= 0$$
for the non-commutative polynomial $p(x,y,x^*,y^*)=x^*x+y^*y-1$, this relation is realizable by bounded operators on a Hilbert space, and it places an upper bound on $\|x\|$ and $\|y\|$. This algebra is also known as the universal Pythagorean algebra (see, for example, \cite{BrothierJones} and \cite{CourtneySherman}). Moreover, it was shown in \cite[Theorem 6.7]{CourtneySherman} that $\mathfrak{A}$ is RFD.
\begin{thm}
    Let
    $\frak{B}=C^*(1,xx^*, yy^*, xy^*)\subset\frak{A}$.
    Then, $G_2^{nc}\cong\frak{B}$. \label{thm: characterization.of.NC.Grass}
\end{thm}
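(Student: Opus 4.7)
The plan is to produce a surjective $*$-homomorphism $\pi \colon G_2^{nc} \to \frak{B}$ via the universal property of $G_2^{nc}$, and then to show $\pi$ is isometric (hence an isomorphism) by factoring arbitrary $*$-representations of $G_2^{nc}$ through representations of $\frak{A}$ after stabilization. The first step is immediate: the column $V = \begin{pmatrix} x \\ y \end{pmatrix}$ satisfies $V^*V = x^*x + y^*y = 1$, so the matrix $VV^* = \begin{pmatrix} xx^* & xy^* \\ yx^* & yy^* \end{pmatrix}$ is a projection in $M_2(\frak{A})$. The universal property of $G_2^{nc}$ thus supplies a unique $*$-homomorphism $\pi \colon G_2^{nc} \to \frak{A}$ sending $P_{ij}$ to the $(i,j)$-entry of $VV^*$, and inspection of generators shows that $\pi$ surjects onto $\frak{B}$.

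For injectivity, it suffices to show that every $*$-representation $\rho \colon G_2^{nc} \to B(H)$ factors through $\pi$, up to amplification. Since $G_2^{nc}$ is finitely generated, its norm is determined by representations on separable Hilbert spaces and we may assume $H$ is separable. Set $Q = [\rho(P_{ij})] \in M_2(B(H))$ and stabilize: the amplified projection $Q \otimes 1 \in M_2(B(H \otimes \ell^2))$ has range $\operatorname{ran}(Q) \otimes \ell^2$, which is countably-infinite-dimensional whenever $Q \neq 0$, and therefore Murray--von Neumann equivalent to the range of $E_{11}$.

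Choose a partial isometry $U = \begin{pmatrix} u & 0 \\ v & 0 \end{pmatrix}$ implementing this equivalence. The identity $U^*U = E_{11}$ unpacks to $u^*u + v^*v = 1$, while $UU^* = Q \otimes 1$ gives $uu^* = Q_{11} \otimes 1$, $uv^* = Q_{12} \otimes 1$, and so on. The universal property of $\frak{A}$ then produces a representation $\tilde\rho \colon \frak{A} \to B(H \otimes \ell^2)$ with $\tilde\rho(x) = u$ and $\tilde\rho(y) = v$, and by construction $\tilde\rho \circ \pi = \rho \otimes \mathrm{id}$ on $G_2^{nc}$. Amplification preserves norms, so $\|\rho(a)\| = \|\tilde\rho(\pi(a))\| \leq \|\pi(a)\|_{\frak{A}}$ for every $a \in G_2^{nc}$; taking the supremum over $\rho$ yields $\|a\|_{G_2^{nc}} \leq \|\pi(a)\|_{\frak{A}}$, and combined with the reverse inequality (automatic since $\pi$ is a $*$-homomorphism) this gives isometry.

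The one case left out of the stabilization argument is $Q = 0$, i.e.\ the trivial character $\chi_0$ of $G_2^{nc}$ sending every $P_{ij}$ to $0$. To factor $\chi_0$ through $\pi$ I would produce the corresponding character of $\frak{B}$ concretely: the assignment $x \mapsto 0$, $y \mapsto S$ with $S$ the unilateral shift on $\ell^2$ satisfies $x^*x + y^*y = S^*S = 1$, and under the resulting representation of $\frak{A}$ the subalgebra $\frak{B}$ lands in the abelian $C^*$-algebra $C^*(1, SS^*)$; evaluating at the vacuum vector $e_0$ (where $SS^*$ vanishes) sends each of $xx^*, yy^*, xy^*, yx^*$ to $0$ and gives a character of $\frak{B}$ that composes with $\pi$ to recover $\chi_0$. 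The main technical hurdle is the Murray--von Neumann equivalence in the stabilized algebra, which reduces to a dimension count, together with the separate treatment of the degenerate case just described.
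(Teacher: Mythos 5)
Your argument is correct, but it reaches the conclusion by a genuinely different route than the paper. The paper proves injectivity of the canonical surjection by invoking the fact that $G_2^{nc}$ is RFD (via McClanahan's embedding into $M_2*_{\C}\C^2$ and Exel--Loring), which reduces everything to a finite-dimensional representation on $\C^n$; there the obstruction is that the projection assigned to $[P_{i,j}]$ lies in $M_{2n}(\C)$ but need not have rank $n$, and this is repaired by padding the representation with copies of the character $\chi_1$ (when the trace is at most $n$) or $\chi_0$ (when it exceeds $n$) until the rank is exactly half the ambient dimension, at which point the projection is $VV^*$ for a column isometry and the universal property of $\frak{A}$ factors the padded representation through $\frak{B}$. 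You dispense with the RFD input entirely: you take an arbitrary representation on a separable space, stabilize by tensoring with $\ell^2$, and use that any two projections of countably infinite rank in $B(K)$ are Murray--von Neumann equivalent, so $Q\otimes 1\sim E_{11}$ whenever $Q\neq 0$; unpacking the implementing partial isometry produces the pair $(u,v)$ with $u^*u+v^*v=1$, universality of $\frak{A}$ gives $\tilde\rho\circ\pi=\rho\otimes\mathrm{id}$, and since amplification is isometric, $\pi$ is isometric. Your two side points are exactly the ones that need care and you handle both correctly: the reduction to separable $H$ (without which the dimension count behind $Q\otimes 1\sim E_{11}$ could fail) and the degenerate case $Q=0$, where the shift representation $x\mapsto 0$, $y\mapsto S$ together with the vector state at $e_0$ (multiplicative on $C^*(1,SS^*)\cong\C^2$) supplies a character of $\frak{B}$ lifting $\chi_0$. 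What each approach buys: yours is self-contained, trading the trace-balancing trick for stabilization and avoiding the external RFD theorem; the paper's stays entirely within finite-dimensional representations, which meshes with the RFD machinery it reuses in Section \ref{sec:cover} (where finite-dimensional representations of $\frak{A}$ are identified with the maps $\pi_{\alpha,\beta}$), and it exhibits concretely how every finite-dimensional representation of $G_2^{nc}$ arises, after padding by characters, from one of $\frak{A}$.
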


\begin{proof}
    As $\begin{pmatrix}
            x\\y
        \end{pmatrix}$ is a column isometry, the matrix
        $$\begin{pmatrix}
            x\\y
        \end{pmatrix}\begin{pmatrix}
            x\\y
        \end{pmatrix}^*=\begin{pmatrix}
            xx^*&xy^*\\yx^*&yy^*
        \end{pmatrix}$$ is a projection. By universality, if $a,\,b,\,c$ are the generators of $G_2^{nc}$,
        there exists a unital surjective $*$-homomorphism
        $$\rho \colon G_2^{nc}\to\frak{B},\, a\to xx^*,\,b\to xy^*,\,c\to yy^*.$$
        It is enough to prove that $\rho$ is injective. To this end, let $x\in\ker(\rho)$. As $G_2^{nc}$ is RFD, to show $x=0$ it is enough to prove that for every finite-dimensional representation $\pi$ of $G_2^{nc}$, $\pi(x)=0$.
        Fix a finite-dimensional representation
        $$\pi \colon G_2^{nc}\to M_n(\C).$$
        Then, the matrix
        $$P=\begin{pmatrix}
            \pi(a)&\pi(b)
            \\
            \pi(b)^*&\pi(c)
        \end{pmatrix}$$
        is a projection. Let $m=tr(P)$. We have two cases.
        \begin{enumerate}
            \item If $m\leq n$, let $\chi_1 \colon G_2^{nc}\to\C$ be the character satisfying
            $$\chi_{1}(a)=\chi_{1}(c)=1,\,\chi_1(b)=0$$
            and consider the representation $\pi'=\pi\oplus\chi_1^{\oplus (n-m)}$, with $\chi_1^{\oplus (n-m)}$ being the $(n-m)$-fold direct sum of $\chi_1$ with itself. Then,
            $$P'=\begin{pmatrix}
                \pi'(a)&\pi'(b)\\\pi'(b)^*&\pi'(c)
            \end{pmatrix}$$ is a projection in $M_{2(2n-m)}(\C)$ with $tr(P')=2(n-m)+m=2n-m$, hence there exists a column isometry
            $$V \colon \C^{2n-m}\to\C^{2(2n-m)},\,V=\begin{pmatrix}
                \tilde{x}\\\tilde{y}
            \end{pmatrix},$$
            such that $VV^*=P'$. By universality of $\frak{A}$, we have a representation
            $$\tilde{\pi} \colon \frak{A}\to M_{2n-m}(\C),\,\tilde{\pi}(x)=\tilde{x},\,\tilde{\pi}(y)=\tilde{y}.$$
            Therefore, $\pi'=\tilde{\pi}|_{\frak{B}}\circ\rho$. As $x\in\ker(\rho)$, $\pi'(x)=0$ and consequently $\pi(x)=0$.         
        
        \item If $n<m$, let $\chi_0 \colon G^{nc}_2\to\C$ be the character that maps $a$, $b$, and $c$ to zero, and consider the representation $\pi'=\pi\oplus\chi_0^{\oplus (m-n)}$.
        Then,
        $$P'=\begin{pmatrix}
                \pi'(a)&\pi'(b)\\\pi'(b)^*&\pi'(c)
            \end{pmatrix}$$ is a projection in $M_{2m}(\C)$ with $tr(P')=m$. By similar arguments, again $\pi(x)=0$.
         \end{enumerate}
         Since this holds for an arbitrary finite-dimensional representation $\pi$, we conclude that $x=0$. Hence, $\rho$ is an isomorphism of $C^*$-algebras.
\end{proof}

For every $m$ and every $(\alpha,\beta)\in\G(m\times m)$, $\begin{pmatrix}
    \alpha^t 
    \\
    \beta^t
\end{pmatrix}$ is a column isometry. By universality of $\frak{A}$, there exists a representation
$$\pi_{\alpha,\beta}\colon\frak{A}\to M_m(\C)$$
given by $x\to\alpha^t,\,y\to\beta^t$. Moreover, note that every $m$ dimensional representation of $\frak{A}$ is of this form for some pair $(\alpha,\beta)\in\G(m\times m)$. Define $\Psi\in M_n(\frak{A})$ by
$$\Psi=\sum_{i,j=1}^n\underbrace{\left(\left(x+f_{\lambda}(z_i)y\right)\left(x^*+\overline{f_{\lambda}(z_j)}y^*\right)+\frac{B_{\lambda}(z_i)\overline{B_{\lambda}(z_j)}}{1-z_i\overline{z_j}}\right)}_{l_{i,j}}F_{i,j},$$
where $F_{i,j}$ are the matrix units of $M_n(\C)$, $f_{\lambda}(z)=\frac{\sqrt{1-|\lambda|^2}}{|\lambda|}(k_{\lambda}(z)-1)$ and $B_{\lambda}$ is the Blaschke product with simple zeroes at $0$ and $\lambda$. An important observation is the following.
\begin{prop}
    Assuming $0,\lambda\not\in\{z_1,\ldots,z_n\}$, $\Psi$ is positive definite.\label{prop:positivity.of.psi}
\end{prop}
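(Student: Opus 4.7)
The strategy is to use the RFD property of $\mathfrak{A}$, recalled just before Theorem \ref{thm: characterization.of.NC.Grass}. By the universal property, every finite-dimensional representation of $\mathfrak{A}$ has the form $\pi_{\alpha,\beta}$ for some $(\alpha,\beta)\in\mathbb{G}(m\times m)$, since $\pi(x)$ and $\pi(y)$ must form a column isometry. Because $\mathfrak{A}$ is RFD, it suffices to exhibit an $\epsilon>0$, independent of $m$ and of $(\alpha,\beta)$, such that $\pi_{\alpha,\beta}^{(n)}(\Psi)\geq \epsilon\,I_{mn}$ for every finite-dimensional representation $\pi_{\alpha,\beta}$.

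Setting $a_i:=\alpha^t+f_\lambda(z_i)\beta^t\in M_m(\C)$ and applying $\pi_{\alpha,\beta}^{(n)}$ entrywise to $\Psi$, the $(i,j)$-block of the image is
$$a_i a_j^* \;+\; c_{ij}\,I_m, \qquad c_{ij}\;:=\;\frac{B_\lambda(z_i)\overline{B_\lambda(z_j)}}{1-z_i\overline{z_j}}.$$
Hence we get the clean splitting
$$\pi_{\alpha,\beta}^{(n)}(\Psi)\;=\;WW^{*}\;+\;C\otimes I_m,$$
where $W\in M_{mn\times m}(\C)$ is the block column with $a_i$ in the $i$-th block, and $C=[c_{ij}]_{i,j=1}^n\in M_n(\C)$. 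The first summand $WW^{*}$ is manifestly positive semi-definite, and crucially the second summand is independent of $(\alpha,\beta)$.

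It remains to show $C$ is strictly positive. Write $C=D\Sigma D^{*}$ with $D=\operatorname{diag}(B_\lambda(z_1),\dots,B_\lambda(z_n))$ and $\Sigma=\bigl[(1-z_i\overline{z_j})^{-1}\bigr]_{i,j=1}^n$ the Szeg\"o kernel matrix on the distinct points $z_1,\dots,z_n\in\D$, which is strictly positive as the Gram matrix of the linearly independent reproducing kernels $\{k_{z_i}\}$. The assumption $0,\lambda\notin\{z_1,\dots,z_n\}$ ensures $B_\lambda(z_i)\neq 0$ for all $i$, so $D$ is invertible and $C>0$. Setting $\epsilon:=\lambda_{\min}(C)>0$ (a constant depending only on $\lambda$ and $\{z_i\}$), we get
$$\pi_{\alpha,\beta}^{(n)}(\Psi)\;\geq\;C\otimes I_m\;\geq\;\epsilon\,I_{mn}$$
uniformly in $m$ and in $(\alpha,\beta)$. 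By RFD, this yields $\Psi\geq\epsilon\cdot 1_{M_n(\mathfrak{A})}$, so $\Psi$ is positive and invertible.

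The only real obstacle is the bookkeeping needed to recognize that after applying $\pi_{\alpha,\beta}$ the awkward-looking first term $(x+f_\lambda(z_i)y)(x^*+\overline{f_\lambda(z_j)}y^*)$ assembles into the Gram-type object $WW^{*}$; once this is seen, all positivity and uniformity come from isolating the $(\alpha,\beta)$-independent Szeg\"o block $C\otimes I_m$, and the RFD property packages everything into a statement about $\Psi$ itself.
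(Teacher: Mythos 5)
Your proof is correct, and its core---splitting $\Psi$ into the positive part coming from the terms $(x+f_\lambda(z_i)y)(x^*+\overline{f_\lambda(z_j)}y^*)$ plus the $(\alpha,\beta)$-independent scalar block $C=\bigl[B_\lambda(z_i)\overline{B_\lambda(z_j)}/(1-z_i\overline{z_j})\bigr]_{i,j=1}^n$, which is strictly positive because the Szeg\H{o} kernels $k_{z_i}$ are linearly independent and $B_\lambda(z_i)\neq 0$---is exactly the decomposition the paper uses. The difference is packaging: the paper sets $L$ to be the column over $\mathfrak{A}$ with entries $x+f_\lambda(z_i)y$, writes $\Psi=LL^*+G\otimes 1$ with $G=C$, observes that $LL^*\geq 0$ holds directly in the $C^*$-algebra $M_n(\mathfrak{A})$ and that $G\otimes 1\geq\lambda_{\min}(G)\cdot 1>0$, and is done, with no representations involved. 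You instead push everything through finite-dimensional representations and invoke the RFD property of $\mathfrak{A}$, together with the fact that every finite-dimensional representation is some $\pi_{\alpha,\beta}$, to pull the uniform lower bound $\epsilon=\lambda_{\min}(C)$ back to $\Psi$. That detour is legitimate: a separating family of finite-dimensional representations of $\mathfrak{A}$ amplifies to a faithful representation of $M_n(\mathfrak{A})$, and a faithful $*$-homomorphism reflects the spectral bound $\Psi\geq\epsilon$, so your uniformity in $(\alpha,\beta)$ and $m$ is exactly what is needed for invertibility, not just positivity. But it costs extra machinery (RFD of $\mathfrak{A}$ and the classification of its finite-dimensional representations) and the uniformity bookkeeping, all of which evaporate once you notice that $LL^*\geq 0$ already holds abstractly; the direct argument also makes it transparent that positivity of $\Psi$ has nothing to do with residual finite-dimensionality.
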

\begin{proof}
    Let $L=\begin{pmatrix}
    x+f_{\lambda}(z_1)y\\\vdots\\x+f_{\lambda}(z_n)y
\end{pmatrix}$. Then,
$$\Psi=LL^*+\underbrace{\left[\frac{B_{\lambda}(z_i)\overline{B_{\lambda}(z_j)}}{1-z_i\overline{z_j}}\right]_{i,j=1}^n}_{G}.$$
Note that $G$ is the Gram matrix of a linearly independent set, since the kernel functions of $H^2$ are linearly independent, our points do not contain the zeroes of $B_{\lambda}$, and $B_{\lambda}$ is an isometry. Hence, $G$ is strictly positive and so is $\Psi$.

\end{proof}
We are ready to present the main theorem of this section.
\begin{thm}
    The homomorphism
    $\Gamma\colon H^{\infty}_{\text{node}}/I\longmapsto M_n(\frak{A})$
    given by
    $$\Gamma(f+I)=\Psi^{-\frac{1}{2}}\underbrace{\begin{pmatrix}
        f(z_1)&0&\ldots&0
        \\
        0&f(z_2)&\ldots&0
        \\
        \vdots &\vdots&\ddots&\vdots
        \\
        0&0&\ldots&f(z_n)
    \end{pmatrix}}_{D_f}\Psi^{\frac{1}{2}}$$
    is completely isometric. In fact, $\Gamma(\node/I) \subset M_n(\fB)$. \label{thm:completely.isometric.embedding}
\end{thm}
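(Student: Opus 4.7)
The plan is to verify three assertions in turn: $\Gamma$ is a well-defined unital algebra homomorphism on $\node/I$, its image lies in $M_n(\mathfrak{B})$, and it is completely isometric. The first step is a direct check. By Proposition \ref{prop:positivity.of.psi}, $\Psi$ is positive and invertible in $M_n(\mathfrak{A})$, so $\Psi^{\pm 1/2}$ exist via continuous functional calculus. Since $D_{fg} = D_f D_g$, one has
\[
\Gamma(f+I)\Gamma(g+I) = \Psi^{-\frac{1}{2}} D_f \Psi^{\frac{1}{2}} \Psi^{-\frac{1}{2}} D_g \Psi^{\frac{1}{2}} = \Psi^{-\frac{1}{2}} D_{fg} \Psi^{\frac{1}{2}} = \Gamma(fg+I),
\]
and $f \in I$ forces $D_f = 0$, so $\Gamma$ factors through $\node/I$ and is unital.

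For the containment $\operatorname{Im}(\Gamma) \subset M_n(\mathfrak{B})$, I would expand $l_{i,j}$ and observe that every entry of $\Psi$ is a $\mathbb{C}$-linear combination of $xx^*,\, xy^*,\, yx^* = (xy^*)^*,\, yy^*,\, 1$, all of which lie in $\mathfrak{B}$. Because $M_n(\mathfrak{B})$ is a unital $C^*$-subalgebra of $M_n(\mathfrak{A})$, the spectra of $\Psi$ inside the two algebras coincide; in particular $\Psi^{\pm 1/2} \in M_n(\mathfrak{B})$ by continuous functional calculus. Combined with $D_f \in M_n(\mathbb{C} \cdot 1)$, this yields $\Gamma(f+I) \in M_n(\mathfrak{B})$.

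For the complete isometry I would use RFD together with the generalized distance formula. Since $\mathfrak{A}$ is RFD, so is $M_{rn}(\mathfrak{A})$, and the norm of any element equals the supremum over finite-dimensional representations of $\mathfrak{A}$ applied entrywise. Each such representation is $\pi_{\alpha,\beta}$ for some $(\alpha,\beta) \in \mathbb{G}(m \times m)$, as noted just before the theorem. Fix $(\alpha,\beta)$ and set $K := (\pi_{\alpha,\beta})_n(\Psi)$. A direct computation gives $\pi_{\alpha,\beta}(l_{i,j}) = k^{\alpha,\beta}(z_i, z_j)^t$, and combining this with the formula $\langle X k^{\alpha,\beta}(\cdot, z_j), Y k^{\alpha,\beta}(\cdot, z_{j'}) \rangle = \operatorname{tr}(X k^{\alpha,\beta}(z_{j'}, z_j) Y^*)$ from Proposition \ref{prop.kernel.function}, one identifies (up to a permutation/transposition of indices) $K$ with the Gram matrix $G$ of the basis $\{e_q^t k^{\alpha,\beta}(\cdot, z_j)\}$ of $(\mathcal{M}_{\alpha,\beta})_{1,m}$, and $I_r \otimes K$ with the Gram matrix of the basis $\{E_{p,q} k^{\alpha,\beta}(\cdot, z_j)\}$ of $(\mathcal{M}_{\alpha,\beta})_{r,m}$. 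Writing $\Gamma_r(F + M_r(I)) = (I_r \otimes \Psi^{-\frac{1}{2}})\, \tilde{D}_F\, (I_r \otimes \Psi^{\frac{1}{2}})$, with $\tilde{D}_F$ the scalar block matrix whose $(p,p')$-block is $D_{F_{p,p'}}$, and applying $(\pi_{\alpha,\beta})_{rn}$, one obtains
\[
(\pi_{\alpha,\beta})_{rn}(\Gamma_r(F + M_r(I))) = (I_r \otimes K^{-\frac{1}{2}})(\tilde{D}_F \otimes I_m)(I_r \otimes K^{\frac{1}{2}}).
\]
Using $M_F^* X k^{\alpha,\beta}(\cdot, z_j) = F(z_j)^* X k^{\alpha,\beta}(\cdot, z_j)$ (Proposition \ref{prop.kernel.function}), the operator $M_F^*$ is block-diagonal in the basis above with blocks $F(z_j)^*$, and running the orthonormalization change of basis $I_r \otimes K^{-\frac{1}{2}}$ exactly as in the proof just after Theorem \ref{thm:good_then_big_envelope} identifies the right-hand side with the matrix of $P_{(\mathcal{M}_{\alpha,\beta})_{r,m}} M_F|_{(\mathcal{M}_{\alpha,\beta})_{r,m}}$ in the resulting orthonormal basis. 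Taking the supremum over $m$ and $(\alpha,\beta) \in \mathbb{G}(m\times m)$, the left-hand side becomes $\|\Gamma_r(F + M_r(I))\|$ by RFD, and the right-hand side becomes $\|F + M_r(I)\|$ by the generalized distance formula, proving complete isometry.

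The main obstacle is the careful bookkeeping in the third step: the identity $\pi_{\alpha,\beta}(l_{i,j}) = k^{\alpha,\beta}(z_i, z_j)^t$ produces a transpose rather than a literal equality with the Gram matrix, and in the matricial case $r > 1$ one has a three-fold tensor structure indexed by the row index $p \in [r]$, the Grassmannian index $q \in [m]$, and the node index $j \in [n]$. The transpositions and re-orderings are harmless for operator norms, but must be tracked consistently so that the orthonormalization argument of the scalar case transfers verbatim to $(\mathcal{M}_{\alpha,\beta})_{r,m}$.
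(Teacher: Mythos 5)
Your proposal is correct and follows essentially the same route as the paper: reduce via residual finite-dimensionality to the representations $\pi_{\alpha,\beta}$, identify $\pi_{\alpha,\beta}^{(n)}$ applied entrywise to $\Gamma^{(r)}(F)$ (up to a permutation of tensor factors) with the matrix of the compression $P_{(\mathcal{M}_{\alpha,\beta})_{r,m}}M_F|_{(\mathcal{M}_{\alpha,\beta})_{r,m}}$ in the orthonormalized kernel basis, and conclude with the generalized distance formula, while handling the containment in $M_n(\frak{B})$ exactly as the paper does. The only difference is presentational: the paper organizes the same Gram-matrix bookkeeping through the auxiliary operators $\psi_{\alpha,\beta}$ and $\varphi_W$ on $M_{r\times mn}(\C)$ and the generalized trace condition, which is equivalent to the direct compression-norm identification you carry out.
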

To prove Theorem \ref{thm:completely.isometric.embedding}, we begin by reformulating the generalized form of the trace condition presented in Section \ref{sec:pick}.
For fixed $m,r\in\N$, $(\alpha,\beta)\in\G(m\times m)$ and $W_1,\ldots,W_n\in M_r(\C)$, define the generalized trace condition to be
\begin{equation}
    \sum_{i,j=1}^n \operatorname{tr}[X_jk^{\alpha,\beta}(z_i,z_j)X_i^*-W_j^*X_jk^{\alpha,\beta}(z_i,z_j)X_i^*W_i]\geq 0,\label{eq:gen.trace.con}
\end{equation}
for all $X_1,\ldots,X_n\in M_{r\times m}(\C)$.
Let $m,r\in\N$, and consider the space $M_{r, mn}(\C)$, thought of as the space of row block matrices with $n$ blocks of size $r\times m$. In other words, we identify $M_{r, mn} \cong M_{r,m} \otimes \C^n$. Fix some $W=(W_1,\ldots,W_n)\in M_r(\C)^{\oplus n}$ and define two linear operators
$$\psi_{\alpha,\beta},\,\varphi_W \colon M_{r\times mn}(\C)\to M_{r\times mn}(\C)$$
by 
$$\psi_{\alpha,\beta}\begin{pmatrix}
    X_1&\ldots&X_n
\end{pmatrix}=\begin{pmatrix}
    X_1&\ldots&X_n
\end{pmatrix}\cdot \left[k^{\alpha,\beta}(z_j,z_i)^t\right]_{i,j=1}^n$$
and 
$$\varphi_W\begin{pmatrix}
X_1&\dots&X_n
\end{pmatrix}=\begin{pmatrix}
W_1X_1&\dots&W_nX_n
\end{pmatrix}$$
Furthermore, define an inner product on $M_{r\times mn}(\C)$ by
$$\ip{\begin{pmatrix}
X_1\\\vdots\\X_n
\end{pmatrix}^T}{\begin{pmatrix}
Y_1\\\vdots\\Y_n
\end{pmatrix}^T}=\operatorname{tr}\left(\sum_{i=1}^n X_iY_i^*\right).$$
It is straightforward that with respect to this inner product
$$\varphi_W^*\begin{pmatrix}
    X_1&\ldots &X_n
\end{pmatrix}=\begin{pmatrix}
    W_1^*X_1&\ldots& W_n^*X_n
\end{pmatrix}$$

\begin{lem}
    Let $E_{i,j}$ be the matrix units in $M_{r\times mn}$, and define the ordered basis
    $$\varepsilon = \left\{E_{i,j} \mid 1\leq i\leq r,\, 1\leq j\leq mn\right\}$$
    ordered by $E_{i,j}<E_{k,l}$ if and only if $j<l$ or $j=l$ and $i<k$. Then, in Kronecker's notation \cite{horn1994topics}, for every $n\in\N$ and every $W=(W_1,\ldots,W_n)\in M_r(\C)^{\oplus n}$,
    $$[\psi_{\alpha,\beta}]_{\varepsilon}=\sum_{i,j=1}^n F_{j,i}\otimes k^{\alpha,\beta}(z_j,z_i)^t\otimes I_r,\,[\varphi_W]_{\varepsilon}=\sum_{i=1}^n F_{i,i}\otimes I_m\otimes W_i$$\label{lem:rep.matrix.of.operators.} with $F_{i,j}$ the matrix units of $M_n(\C)$.
\end{lem}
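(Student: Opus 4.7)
The plan is to verify both identities by a direct computation on basis elements, using the natural tensor product decomposition induced by the ordering $\varepsilon$. Concretely, with $j = (c-1)m + d$ for $1 \leq c \leq n$ and $1 \leq d \leq m$, the map $E_{a, (c-1)m + d} \mapsto e_c \otimes e_d \otimes e_a$ identifies $M_{r \times mn}(\C)$ with $\C^n \otimes \C^m \otimes \C^r$, where the chosen lexicographic order of $\varepsilon$ makes the block index $c$ the slowest, the within-block column index $d$ the middle, and the row index $a$ the fastest. Both operators preserve this tensor structure in a transparent way: $\psi_{\alpha,\beta}$ acts only on the ``column'' side (the first two tensor factors), while $\varphi_W$ acts only on the ``row'' side (the last tensor factor), grouped by the block index (the first tensor factor).

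For $\psi_{\alpha,\beta}$, I would apply right multiplication by the block matrix $K = [k^{\alpha,\beta}(z_j, z_i)^t]_{i,j=1}^n$ to the basis element $X_c = e_{a,d}$ (and $X_i = 0$ for $i \neq c$): the $j$-th block of the output is $e_{a,d}\, k^{\alpha,\beta}(z_j, z_c)^t$, which expanded in the basis is a sum over indices $d''$ of scalar coefficients times $E_{a, (j-1)m + d''}$. Under the tensor identification this is precisely the vector obtained by letting $F_{j,c}$ send the block coordinate $e_c$ to $e_j$, letting $k^{\alpha,\beta}(z_j, z_c)^t$ act on the middle coordinate $e_d$, and leaving $e_a$ fixed by $I_r$. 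Summing over $c \to i$ and $j$ reproduces $\sum_{i,j} F_{j,i} \otimes k^{\alpha,\beta}(z_j, z_i)^t \otimes I_r$.

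For $\varphi_W$, the computation is lighter: left multiplication by $W_i$ on the $i$-th block affects only row indices within that block, and does not touch the block index or the within-block column index. On the basis element $E_{a, (c-1)m + d} \leftrightarrow e_c \otimes e_d \otimes e_a$, $\varphi_W$ produces $e_c \otimes e_d \otimes (W_c e_a)$, which is exactly the image of $e_c \otimes e_d \otimes e_a$ under $\sum_{i=1}^n F_{i,i} \otimes I_m \otimes W_i$ (the diagonal projection $F_{c,c}$ selects the correct block $c$).

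The content is thus purely book-keeping, and the only real place where care is required is in matching the transpose convention for the blocks $k^{\alpha,\beta}(z_j, z_i)^t$ appearing in $K$ against the convention for forming the matrix $[T]_{\varepsilon}$ of an operator in a column-stacking basis (i.e.\ $\mathrm{vec}(XK) = (K^T \otimes I_r)\,\mathrm{vec}(X)$). I would therefore state the identifications carefully at the outset, and then the verification reduces to checking that each summand $F_{j,i} \otimes k^{\alpha,\beta}(z_j, z_i)^t \otimes I_r$, respectively $F_{i,i} \otimes I_m \otimes W_i$, has the correct $(c,d,a)\leftrightarrow(c',d',a')$ matrix entry; no further analytic input is needed.
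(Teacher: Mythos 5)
Your overall route --- identifying $M_{r\times mn}(\C)$ with $\C^n\otimes\C^m\otimes\C^r$ via the given column-major order and checking both formulas on matrix units --- is the same computation the paper performs by citing the vec identity $\operatorname{vec}(XK)=(K^T\otimes I_r)\operatorname{vec}(X)$ from Horn--Johnson, and your treatment of $\varphi_W$ is correct. However, the $\psi_{\alpha,\beta}$ computation has a transpose error at precisely the point you yourself single out as delicate, and as written it does not establish the stated formula. If the $c$-th block of $X$ is $e_{a,d}$, then the $j$-th block of $XK$ is $e_{a,d}K_{c,j}=\sum_{d''}(K_{c,j})_{d,d''}\,e_{a,d''}$, so under your identification the middle tensor factor is acted on by $(K_{c,j})^t$, not by $K_{c,j}$: right multiplication of a block by $M$ corresponds to $M^t$ on the middle factor (this is exactly the block-wise transpose hidden in $K\mapsto K^T$). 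Starting, as you do, from the displayed definition $K=[k^{\alpha,\beta}(z_j,z_i)^t]_{i,j}$, the correct bookkeeping yields $[\psi_{\alpha,\beta}]_\varepsilon=\sum_{i,j}F_{j,i}\otimes k^{\alpha,\beta}(z_j,z_i)\otimes I_r$, which differs from the asserted $\sum_{i,j}F_{j,i}\otimes k^{\alpha,\beta}(z_j,z_i)^t\otimes I_r$ unless the $m\times m$ blocks $k^{\alpha,\beta}(z_j,z_i)$ are symmetric --- and they are not for general $(\alpha,\beta)\in\G(m\times m)$.

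The source of the clash is a notational inconsistency in the paper itself: the displayed definition of $\psi_{\alpha,\beta}$ carries a spurious transpose on the kernel blocks, while the paper's own proof of the lemma (and everything downstream, namely the identity $[\psi_{\alpha,\beta}]_\varepsilon=\pi^{(n)}_{\alpha,\beta}(\Psi)\otimes I_r$ and the rewriting of the trace condition as $\psi_{\alpha,\beta}\geq\varphi_W\circ\psi_{\alpha,\beta}\circ\varphi_W^*$) uses right multiplication by $[k^{\alpha,\beta}(z_j,z_i)]_{i,j}$ without block transposes, for which the vec identity gives exactly the stated formula. So to repair your argument you must either carry out the middle-factor action correctly (which forces you to notice the mismatch and trace it to the extra ${}^t$ in the definition), or start from the untransposed kernel matrix. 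Merely flagging the transpose convention and then letting the block act untransposed on the middle factor, as your second paragraph does, asserts without proof the one identity that actually carries the content of the lemma.
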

\begin{proof}
    By definition, $\psi_{\alpha,\beta}$ is given by right multiplication by the matrix $\sum_{i,j=1}^n F_{i,j}\otimes k^{\alpha,\beta}(z_j,z_i)$. Hence, by \cite[Lemma 4.31]{horn1994topics}, $[\psi_{\alpha,\beta}]_{\varepsilon}=\sum_{i,j=1}^n F_{j,i}\otimes k^{\alpha,\beta}(z_j,z_i)^t\otimes I_r$.
    \\\\
    For the second part, by definition, $\varphi_W$ acts on the $i$'th block by multiplication on the left by $W_i$. Therefore, again by \cite[Lemma 4.31]{horn1994topics}, $[\varphi_W]_{\varepsilon}$ is a block diagonal matrix, with the $i$'th block equal to $I_m\otimes W_i$. Therefore,
    $$[\varphi_W]_{\varepsilon}=\sum_{i=1}^n F_{i,i}\otimes I_m\otimes W_i.$$
\end{proof}
\begin{remark}
    Immediately, $\varepsilon$ is an orthonormal basis with respect to the inner product defined on $M_{r\times mn}(\C)$.\label{remark:orthonormal.basis}
\end{remark}
\begin{lem}
    For every $(\alpha,\beta)\in\G(m\times m)$, $\pi_{\alpha,\beta}^{(n)}(\Psi)\otimes I_r=[\psi_{\alpha,\beta}]_{\varepsilon}$. In particular, $[\psi_{\alpha,\beta}]_{\varepsilon}$ is positive definite.
    \label{lem.ampliation.of.pi}
\end{lem}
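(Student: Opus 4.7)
The plan is a direct entry-by-entry computation. First I would apply the representation $\pi_{\alpha,\beta}$, which sends $x \mapsto \alpha^t$ and $y \mapsto \beta^t$, to each defining entry $l_{i,j} \in \frak{A}$ of $\Psi$. Using $(\alpha^t)^* = \overline{\alpha}$ and similarly for $\beta$, this gives
\[
\pi_{\alpha,\beta}(l_{i,j}) = (\alpha^t + f_{\lambda}(z_i)\beta^t)(\overline{\alpha} + \overline{f_{\lambda}(z_j)}\,\overline{\beta}) + \frac{B_{\lambda}(z_i)\overline{B_{\lambda}(z_j)}}{1-z_i\overline{z_j}} I_m.
\]

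The key identity is then $\pi_{\alpha,\beta}(l_{i,j}) = k^{\alpha,\beta}(z_i, z_j)^t$. This is obtained by transposing the kernel formula from Proposition \ref{prop.kernel.function}: applying $(AB)^t = B^t A^t$ to the rank-one summand $(\alpha^* + \overline{f_{\lambda}(z_j)}\beta^*)(\alpha + f_{\lambda}(z_i)\beta)$ converts it into precisely $(\alpha^t + f_{\lambda}(z_i)\beta^t)(\overline{\alpha} + \overline{f_{\lambda}(z_j)}\,\overline{\beta})$, while the scalar multiple of $I_m$ is invariant under transposition.

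Once this pointwise identity is in hand, writing $\Psi = \sum_{i,j} F_{i,j} \otimes l_{i,j}$ under the identification $M_n(\frak{A}) \cong M_n \otimes \frak{A}$, applying $\pi_{\alpha,\beta}^{(n)}$ entry-wise, and tensoring with $I_r$, produces
\[
\pi_{\alpha,\beta}^{(n)}(\Psi) \otimes I_r = \sum_{i,j=1}^n F_{i,j} \otimes k^{\alpha,\beta}(z_i, z_j)^t \otimes I_r.
\]
Relabeling the summation indices $i \leftrightarrow j$ makes this exactly match the formula for $[\psi_{\alpha,\beta}]_\varepsilon$ from Lemma \ref{lem:rep.matrix.of.operators.}.

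For the positive-definiteness assertion, I would use the decomposition $\Psi = LL^* + G$ appearing in the proof of Proposition \ref{prop:positivity.of.psi}. Since the Gram matrix $G \in M_n(\C) \subset M_n(\frak{A})$ is strictly positive, there exists $\delta > 0$ with $G \geq \delta I$, hence $\Psi \geq \delta I$ in $M_n(\frak{A})$. The unital $*$-homomorphism $\pi_{\alpha,\beta}^{(n)}$ preserves this bound, and tensoring with $I_r$ preserves it once more, giving $[\psi_{\alpha,\beta}]_\varepsilon \geq \delta I$. No real obstacle is anticipated — the work is entirely bookkeeping around the transposition built into the definition of $\pi_{\alpha,\beta}$ and the indexing convention used in Lemma \ref{lem:rep.matrix.of.operators.}.
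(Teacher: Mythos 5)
Your proposal is correct and follows essentially the same route as the paper: the entry-wise evaluation $\pi_{\alpha,\beta}(l_{i,j})=k^{\alpha,\beta}(z_i,z_j)^t$ (the transposition bookkeeping you spell out is exactly what the paper uses implicitly), the matching with the formula of Lemma \ref{lem:rep.matrix.of.operators.} after swapping indices, and strict positivity via Proposition \ref{prop:positivity.of.psi} transported through the unital $*$-homomorphism and the Kronecker product with $I_r$. Your explicit lower bound $\Psi \geq \delta I$ is just an unwound version of the paper's appeal to that proposition, so there is no substantive difference.
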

\begin{proof}
    $$(\pi_{\alpha,\beta}^{(n)}(\Psi))_{i,j}=\pi_{\alpha,\beta}(\ell_{i,j})=(\alpha^t+f_{\lambda}(z_i)\beta^t)(\overline{\alpha}+\overline{f_{\lambda}(z_j)\beta})+\frac{B_{\lambda}(z_i)\overline{B_{\lambda}(z_j)}}{1-z_i\overline{z_j}}I_m=$$$$=k^{\alpha,\beta}(z_i,z_j)^t.$$ Therefore,
    $$\pi_{\alpha,\beta}^{(n)}(\Psi)\otimes I_r=\sum_{i,j=1}^n F_{j,i}\otimes k^{\alpha,\beta}(z_j,z_i)^t\otimes I_r=[\psi_{\alpha,\beta}]_{\varepsilon}.$$
    For the second part, by \ref{prop:positivity.of.psi} and since $\pi^{(n)}$ is a homomorphism of unital $C^*-$ algebras, $\pi^{(n)}_{\alpha, \beta}(\Psi)$ is positive definite. By properties of Kronecker product and since $I_r$ is positive definite, $[\psi_{\alpha,\beta}]_{\varepsilon}$ is positive definite. Lastly, by remark \ref{remark:orthonormal.basis}, $\psi_{\alpha,\beta}$ is positive definite. 
\end{proof}
\begin{lem}
    The generalized trace condition for fixed $m,r\in\N$, $(\alpha,\beta)\in\G(m\times m)$ and  $W_1,\ldots, W_n\in M_r(\C)$ is equivalent to 
    $$\left\|\left[\psi_{\alpha,\beta}^{-\frac{1}{2}}\right]_\varepsilon\cdot\left[\varphi_W\right]_\varepsilon\cdot \left[\psi_{\alpha,\beta}^{\frac{1}{2}}\right]_\varepsilon\right\|\leq 1.$$ 
\end{lem}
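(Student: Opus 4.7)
The plan is to convert the generalized trace condition into an operator inequality on $M_{r\times mn}(\C)$ for the operators $\psi_{\alpha,\beta}$ and $\varphi_W$, and then to translate it into the desired norm bound using the functional calculus for the positive operator $\psi_{\alpha,\beta}$. The matrix-representation reformulation is essentially book-keeping since $\varepsilon$ is orthonormal (Remark \ref{remark:orthonormal.basis}), so $[\cdot]_{\varepsilon}$ is an isometric $*$-homomorphism onto $M_{r\cdot mn}(\C)$ that intertwines functional calculus and norm.

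The first step is to identify the two summands in \eqref{eq:gen.trace.con} with sesquilinear forms on $(M_{r\times mn}(\C),\langle\cdot,\cdot\rangle)$. A direct unfolding of the block right-multiplication that defines $\psi_{\alpha,\beta}$, together with $k^{\alpha,\beta}(z_j,z_i) = k^{\alpha,\beta}(z_i,z_j)^*$, cyclicity of the trace, and $\operatorname{tr}(A)=\operatorname{tr}(A^t)$ to absorb the transposes in the definition, gives
$$
\bigl\langle \psi_{\alpha,\beta}(X),X\bigr\rangle \;=\; \sum_{i,j=1}^n \operatorname{tr}\!\bigl(X_j\,k^{\alpha,\beta}(z_i,z_j)\,X_i^*\bigr),
$$
i.e. the first (positive) summand of \eqref{eq:gen.trace.con}. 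Next, because $\varphi_W^*(X)=(W_1^*X_1,\ldots,W_n^*X_n)$, substituting $X\mapsto \varphi_W^*(X)$ in the formula above yields
$$
\bigl\langle \psi_{\alpha,\beta}(\varphi_W^*X),\,\varphi_W^*X\bigr\rangle \;=\; \sum_{i,j=1}^n \operatorname{tr}\!\bigl(W_j^*X_j\,k^{\alpha,\beta}(z_i,z_j)\,X_i^*W_i\bigr),
$$
which is the second summand of \eqref{eq:gen.trace.con}. Rewriting the left-hand side as $\langle \varphi_W\psi_{\alpha,\beta}\varphi_W^*X,X\rangle$, the trace condition for every $X\in M_{r\times mn}(\C)$ is seen to be equivalent to the operator inequality
$$
\psi_{\alpha,\beta} \;\geq\; \varphi_W\,\psi_{\alpha,\beta}\,\varphi_W^*.
$$

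The second step is purely functional-analytic. By Proposition \ref{prop:positivity.of.psi} and Lemma \ref{lem.ampliation.of.pi}, $\psi_{\alpha,\beta}$ is positive definite, so $\psi_{\alpha,\beta}^{\pm 1/2}$ are well defined. Sandwiching the above inequality with $\psi_{\alpha,\beta}^{-1/2}$ gives $\psi_{\alpha,\beta}^{-1/2}\varphi_W\psi_{\alpha,\beta}\varphi_W^*\psi_{\alpha,\beta}^{-1/2}\le I$. Setting $T = \psi_{\alpha,\beta}^{-1/2}\varphi_W\psi_{\alpha,\beta}^{1/2}$, one checks $TT^* = \psi_{\alpha,\beta}^{-1/2}\varphi_W\psi_{\alpha,\beta}\varphi_W^*\psi_{\alpha,\beta}^{-1/2}$, so the inequality is exactly $\|T\|^2=\|TT^*\|\le 1$. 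Passing to matrix representations in the orthonormal basis $\varepsilon$ preserves both composition and norm, and turns $T$ into $[\psi_{\alpha,\beta}^{-1/2}]_{\varepsilon}\,[\varphi_W]_{\varepsilon}\,[\psi_{\alpha,\beta}^{1/2}]_{\varepsilon}$, yielding the claim. The only real subtlety — and therefore the most error-prone part — is the bookkeeping of the transposes in Step~1 so that the quadratic form $\langle\psi_{\alpha,\beta}X,X\rangle$ matches \eqref{eq:gen.trace.con} on the nose; everything after that is a standard positive-operator manipulation.
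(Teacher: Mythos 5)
Your argument is correct and is essentially the paper's own proof: both rewrite the generalized trace condition as $\langle(\psi_{\alpha,\beta}-\varphi_W\circ\psi_{\alpha,\beta}\circ\varphi_W^*)X,X\rangle\geq 0$ for all $X$, i.e.\ $\psi_{\alpha,\beta}\geq\varphi_W\circ\psi_{\alpha,\beta}\circ\varphi_W^*$, then conjugate by $\psi_{\alpha,\beta}^{-\frac{1}{2}}$ (positive definiteness coming from Proposition \ref{prop:positivity.of.psi} via Lemma \ref{lem.ampliation.of.pi}) and pass to the orthonormal basis $\varepsilon$ to convert the resulting bound into the stated norm inequality. The transpose bookkeeping you single out is really an artifact of the displayed definition of $\psi_{\alpha,\beta}$: elsewhere (e.g.\ in Lemma \ref{lem:rep.matrix.of.operators.}) the paper treats $\psi_{\alpha,\beta}$ as right multiplication by the block matrix with $(i,j)$ entry $k^{\alpha,\beta}(z_j,z_i)$, with the transpose appearing only in $[\psi_{\alpha,\beta}]_{\varepsilon}$, and with that reading your quadratic-form identity holds exactly as you use it.
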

\begin{proof}
    By our previous notation, the generalized trace condition is
    \begin{equation}
    \begin{aligned}
        &\operatorname{tr}\left[
        \begin{pmatrix}
            X_1 & \dots & X_n
        \end{pmatrix}
        \left[k^{\alpha,\beta}(z_j, z_i)\right]_{i,j=1}^n
        \begin{pmatrix}
            X_1^* \\ \vdots \\ X_n^*
        \end{pmatrix}
        \right. \\
        &\quad -
        \left.
        \begin{pmatrix}
            W_1^*X_1 & \dots & W_n^*X_n
        \end{pmatrix}
        \left[k^{\alpha,\beta}(z_j, z_i)\right]_{i,j=1}^n
        \begin{pmatrix}
            X_1^*W_1 \\ \vdots \\ X_n^*W_n
        \end{pmatrix}
        \right] \geq 0,
    \end{aligned}
\end{equation}
for all $X_1,\ldots,X_n\in M_{r\times m}(\C)$.
Denoting $X=
\begin{pmatrix}
X_1&\ldots&X_n\end{pmatrix}$, this is the same as
$$\ip{\psi_{\alpha,\beta}(X)-\varphi_W(\psi_{\alpha,\beta}(\varphi_W^*(X)))}{X}\geq 0,$$ for all $X_1,\ldots, X_n\in M_{r\times m}(\C)$. By definition, this is the same as
$$\psi_{\alpha,\beta}\geq \varphi_W\circ\psi_{\alpha,\beta}\circ\varphi_W^*.$$
Using Lemma \ref{lem.ampliation.of.pi}, we can conjugate both sides by $\psi_{\alpha,\beta}^{-\frac{1}{2}}$. Consequently, the trace condition is equivalent to
$$1\geq \left(\psi_{\alpha,\beta}^{-\frac{1}{2}}\circ\varphi_W\circ\psi_{\alpha,\beta}^{\frac{1}{2}}\right)\left(\psi_{\alpha,\beta}^{-\frac{1}{2}}\circ\varphi_W\circ\psi_{\alpha,\beta}^{\frac{1}{2}}\right)^*.$$
The latter, in turn, is equivalent to
$$\left\|\psi_{\alpha,\beta}^{-\frac{1}{2}}\circ\varphi_W\circ\psi_{\alpha,\beta}^{\frac{1}{2}}\right\|\leq 1.$$
\end{proof}

We are ready to prove Theorem \ref{thm:completely.isometric.embedding}.

\begin{proof}[Proof of Theorem \ref{thm:completely.isometric.embedding}]
    First, it is clear that the range is contained in $M_n(\frak{B})$, as $\Psi\in\frak{B}$. Throughout this proof, we will use the notation
    $$\widetilde{\psi_{\alpha,\beta}}:=\sum_{i,j=1}^k F_{j,i}\otimes k^{\alpha,\beta}(z_j,z_i)^t.$$
    
    Let $r\in\N$ and consider the homomorphism $\Gamma^{(r)} \colon M_r(H^{\infty}_{\text{node}}/I)\to M_r(M_n(\frak{A}))$
    given by
    $$\Gamma^{(r)}([f_{i,j}+I])=\sum_{i,j=1}^r E_{i,j}\otimes \Gamma(f_{i,j}+I).$$
    To show $\Gamma^{(r)}$ is an isometry, it is enough to prove
    $\|[f_{i,j}+I]\|\leq 1\iff\|\Gamma^{(r)}([f_{i,j}+I])\|\leq 1$.
    As $\frak{A}$ is RFD, our claim is equivalent to showing that
    $\|[f_{i,j}+I]\|\leq 1\iff\|\tilde{\rho}(\Gamma^{(r)}([f_{i,j}+I]))\|\leq 1$
    for every finite-dimensional representation
    $\tilde{\rho}\colon M_r(M_n(\frak{A}))\to M_k(\C)$.
    Fix a finite-dimensional representation $\tilde{\rho}$ of $M_r(M_n(\frak{A}))$. It is well known that 
    $\tilde{\rho}$ is unitarily equivalent to $\rho^{(r)}$ for some representation 
    $$\rho \colon M_n(\frak{A})\to M_{s}(\C).$$
    Hence, $\left\|\rho^{(r)}(\Gamma^{(r)}([f_{i,j}+I]))\right\|=\left\|\tilde{\rho}(\Gamma^{(r)}([f_{i,j}+I]))\right\|$.
    Again, $\rho$ is unitarily equivalent to $\hat{\rho}^{(n)}$ where $\hat{\rho}$ is a representation of $\frak{A}$. As we noted, $\hat{\rho}=\pi_{\alpha,\beta}$ for some $(\alpha,\beta)\in\G(m\times m)$. Hence, $\rho$ is unitarily equivalent to $\pi_{\alpha,\beta}^{(n)}$. Let $U$ be the unitary satisfying $\rho([f_{p,q}])=U\pi^{(n)}_{\alpha,\beta}([f_{p,q}])U^*$.
    Then,
    $$\rho^{(r)}\left(\Gamma^{(r)}([f_{i,j}+I])\right)=\sum_{i,j=1}^r E_{i,j}\otimes\rho\left(\Gamma\left(f_{i,j}+I\right)\right)=\sum_{i,j=1}^r E_{i,j}\otimes\left(U\pi_{\alpha,\beta}^{(n)}\left(\Gamma\left(f_{i,j}+I\right)\right)U^*\right)=$$
    $$=\sum_{i,j=1}^r E_{i,j}\otimes \left(U\pi_{\alpha,\beta}^{(n)}\left(\Psi^{-\frac{1}{2}}D_{f_{i,j}}\Psi^{\frac{1}{2}}\right)U^*\right)=
    $$$$=\sum_{i,j=1}^r E_{i,j}\otimes\left(U\widetilde{\psi_{\alpha,\beta}}^{-\frac{1}{2}}\left(\sum_{k=1}^n F_{k,k}\otimes f_{i,j}(z_k)I_m\right)\widetilde{\psi_{\alpha,\beta}}^{\frac{1}{2}}U^*\right)=$$
    $$=\left(I_r\otimes U^*\right)\left(I_r\otimes \widetilde{\psi_{\alpha,\beta}}^{-\frac{1}{2}}\right)\left(\sum_{i,j=1}^r E_{i,j}\otimes \left(\sum_{k=1}^n F_{k,k}\otimes f_{i,j}(z_k)I_m\right)\right)\left(I_r\otimes\widetilde{\psi_{\alpha,\beta}}^{\frac{1}{2}}\right)\left(I_r\otimes U\right).$$
    Note that the last equality of the second line holds since $\pi_{\alpha,\beta}^{(n)}(f_{i,j}(z_k))=f_{i,j}(z_k)I_m$. Since $I_r$ and $U$ are unitaries, $I_r\otimes U$ is unitary (with $(I_r\otimes U)^*=I_r\otimes U^*$) by properties of Kronecher product, that is  $\rho^{(r)}\left(\Gamma^{(r)}([f_{i,j}])\right)$ is unitarily equivalent to
    \begin{equation}
        \label{eq:unitary.equivalence}
        \left(I_r\otimes \widetilde{\psi_{\alpha,\beta}}^{-\frac{1}{2}}\right)\left(\sum_{i,j=1}^r E_{i,j}\otimes \left(\sum_{k=1}^n F_{k,k}\otimes f_{i,j}(z_k)I_m\right)\right)\left(I_r\otimes\widetilde{\psi_{\alpha,\beta}}^{\frac{1}{2}}\right).
    \end{equation}
    Conjugating \eqref{eq:unitary.equivalence} by the unitary that interchanges the first two tensor products and then by the unitary that interchanges the last two tensor products, we get 
    $$\left(\widetilde{\psi_{\alpha,\beta}}^{-\frac{1}{2}}\otimes I_r\right)\left(\sum_{k=1}^n F_{k,k}\otimes I_m\otimes W_k\right)\left(\widetilde{\psi_{\alpha,\beta}}^{\frac{1}{2}}\otimes I_r\right)=\left[\psi_{\alpha,\beta}\right]^{-\frac{1}{2}}_{\varepsilon}\cdot\left[\varphi_W\right]_{\varepsilon}\cdot \left[\psi_{\alpha,\beta}\right]^{-\frac{1}{2}}_{\varepsilon}=$$
    $$=\left[\psi_{\alpha,\beta}^{-\frac{1}{2}}\circ\varphi_W\circ\psi_{\alpha,\beta}^{\frac{1}{2}}\right]_{\varepsilon},$$
 
    with $W_k=[f_{i,j}](z_k)$.
    Now assume that $\left\|\tilde{\rho}(\Gamma^{(r)}(f))\right\|\leq 1$. By unitary equivalence, and since $\varepsilon$ is an orthonormal basis,
    $$\left\|\left[\psi_{\alpha,\beta}^{-\frac{1}{2}}\circ\varphi_W\circ\psi_{\alpha,\beta}^{\frac{1}{2}}\right]_{\varepsilon}\right\|=\left\|\psi_{\alpha,\beta}^{-\frac{1}{2}}\circ\varphi_W\circ\psi_{\alpha,\beta}^{\frac{1}{2}}\right\|\leq 1.$$
    We have observed that this is equivalent to the generalized trace condition \eqref{eq:gen.trace.con} for $W_1,\ldots, W_n$ being true for $r$ and $m$.
    As $M_r\left(H^{\infty}_{\text{node}}/I\right)$ and $M_r(H^{\infty}_{\text{node}})/\mathcal{I}$ are isometrically isomorphic, by \eqref{eq:gen.trace.con} this is equivalent to 
    $\left\|[f_{i,j}]+\mathcal{I}\right\|=\left\|[f_{i,j}+I]\right\|\leq 1$.
    Therefore, $\tilde{\rho}(\Gamma^{(r)})$ is an isometry, hence $\Gamma^{(r)}$ is an isometry. 
\end{proof}

\begin{quest}
Is $C^*(\Gamma(\node/I))$ the $C^*$-envelope of $\node/I$ at least for $n$ big enough? 
\end{quest}

\begin{remark}
It follows from a general proposition in \cite{davidson2009constrained} that if we have only two points, then $C^*_e(\node/I) \cong M_2(\C)$. Hence, we only ask the above question for $n \geq 3$. However, maybe, a bigger $n$ is necessary to ensure that $C^*_e(\node/I) \cong C^*(\Gamma(\node/I))$. It seems that a big obstruction in understanding the above question is a technical one. Namely, figuring out whether $\Psi \in C^*(\Gamma(\node/I))$. If it is true, then one might be able to use \cite[Theorem]{Hopenwasser-boundary}.
\end{remark}

\bibliographystyle{abbrv}
\bibliography{mybib}
\end{document}